\newtheorem{conj}{Conjecture}
\newtheorem{defi}{Definition}
\newtheorem{corollary}[defi]{Corollary}
\newtheorem{theorem}[defi]{Theorem}
\newtheorem{lem}[defi]{Lemma}
\newtheorem{cor}[defi]{Corollary}
\newtheorem{obs}[defi]{Observation}
\newtheorem{prop}{Proposition}
\newtheorem{question}{Question}
\newtheorem{Example}[defi]{Example}
\newtheorem{claim}[defi]{Claim}
\newtheorem*{cereceda}{Cereceda's Conjecture}
\newtheorem*{prop2}{Proposition~2}
\newcommand*{\myproofname}{Proof}
\newenvironment{claimproof}[1][\myproofname]{\begin{proof}[#1]}{\end{proof}}
\def\aftermath{\par\vspace{-\belowdisplayskip}\vspace{-\parskip}\vspace{-\baselineskip}}
\DeclareMathOperator{\rad}{rad}
\DeclareMathOperator{\ch}{ch}
\DeclareMathOperator{\mad}{mad}
\DeclareMathOperator{\degen}{degen}
\DeclareMathOperator{\comp}{comp}
\DeclareMathOperator{\diam}{diam}
\DeclareMathOperator{\ecc}{ecc}
\DeclareMathOperator{\dist}{dist}
\renewcommand{\a}{\alpha}
\renewcommand{\b}{\beta}
\renewcommand{\c}{\gamma}
\renewcommand{\d}{\delta}
\def\vc{\overrightarrow}
\renewcommand{\deg}{d}
\def\C{\mathcal{C}}
\newcommand{\Mod}[1]{\ (\mathrm{mod}\ #1)}
\newcommand\ceil[1]{\left\lceil#1\right\rceil}
\newcommand\floor[1]{\left\lfloor#1\right\rfloor}
\title{Optimally Reconfiguring\\ List and Correspondence Colourings}
\author{
Stijn Cambie\thanks{Department of Mathematics, Radboud University Nijmegen, Netherlands and Mathematics Institute, University of Warwick, UK. E-mail: {\tt stijn.cambie@hotmail.com}, supported by a Vidi grant (639.032.614) of the Netherlands Organisation for Scientific Research (NWO) and the UK Research and Innovation Future Leaders Fellowship MR/S016325/1. Current affiliation: Department of Computer Science, KU Leuven Campus Kulak-Kortrijk, 8500 Kortrijk, Belgium.}
\and
Wouter Cames van Batenburg\thanks{
Delft University of Technology, Delft Institute of Applied Mathematics;
\texttt{w.p.s.camesvanbatenburg@tudelft.nl}}
\and 
Daniel W. Cranston\thanks{Virginia Commonwealth University, Department of
Computer Science;
\texttt{dcranston@vcu.edu}}
}
\begin{document}

\maketitle

\begin{abstract}
The reconfiguration graph $\C_k(G)$ for the $k$-colourings of a graph $G$ has a
vertex for each proper $k$-colouring of $G$, and two vertices of $\C_k(G)$ are adjacent
precisely when those $k$-colourings differ on a single vertex of
$G$.  Much work has focused on bounding the maximum value of $\diam \C_k(G)$
over all $n$-vertex graphs $G$.  We consider the analogous problems for list
colourings and for correspondence colourings.
We conjecture that if $L$ is a list-assignment for a graph $G$ with $|L(v)|\ge
d(v)+2$ for all $v\in V(G)$, then $\diam \C_L(G)\le n(G)+\mu(G)$.
We also conjecture that if $(L,H)$ is a correspondence cover for a graph $G$ with
$|L(v)|\ge d(v)+2$ for all $v\in V(G)$, then $\diam \C_{(L,H)}(G)\le n(G)+\tau(G)$.
(Here $\mu(G)$ and $\tau(G)$ denote the matching number and vertex cover number
of $G$.)
For every graph $G$, we give constructions showing that both conjectures are
best possible, which also hints towards an exact form of Cereceda's Conjecture for regular graphs.  Our first main result proves the upper bounds (for the list and
correspondence versions, respectively) $\diam \C_L(G)\le n(G)+2\mu(G)$ and
$\diam \C_{(L,H)}(G)\le n(G)+2\tau(G)$.
Our second main result proves that both conjectured bounds hold, whenever
all $v$ satisfy $|L(v)|\ge 2d(v)+1$. 
We conclude by proving one or both conjectures for various classes of graphs such
as complete bipartite graphs, subcubic graphs, cactuses, and graphs with bounded
maximum average degree.
\end{abstract}

\section{Introduction}

In this paper we study questions of transforming one proper colouring of a graph
$G$ into another, by a sequence of \emph{recolouring steps}.  Each step recolours a
single vertex, and we require that each resulting intermediate colouring is
also proper.  Our work fits into the broader context of \emph{reconfiguration}, in
which some object (in our case a proper colouring) is transformed into another
object of the same type, via a sequence of small changes, and we require that
after each change we again have an object of the prescribed type (for us a
proper colouring).  It is natural to pose
reconfiguration questions for a wide range of objects: proper colourings,
independent sets, dominating sets, maximum matchings, spanning
trees, and solutions to 3-SAT, to name a few.  For each type of object, we must define 
allowable changes between successive objects in the sequence (in our case,
recolouring a single vertex).  Typically, we ask four types of
questions. (1) Given objects $\a$ and $\b$, is there a reconfiguration sequence
from $\a$ to $\b$? (2) If the first question is answered yes, what is the
length of a shortest such sequence? (3) Is the first question answered yes
for every pair of objects $\a$ and $\b$? 
(4) If yes, what is the maximum value of $\dist(\a,\b)$ over all $\a$ and $\b?$
For an introduction to reconfiguration,
we recommend surveys by van den Heuvel~\cite{vdH-survey} and Nishimura~\cite{nishimura}.
Most of our definitions and notation are standard, but for completeness we include many of them at the start of Section~\ref{defs:sec}.

For a graph $G$ and a positive integer $k$, the \emph{$k$-colouring reconfiguration
graph} of $G$, denoted $\C_k(G)$, has as its vertices all proper $k$-colourings
of $G$ and two vertices of $\C_k(G)$ are adjacent if their corresponding
colourings differ on exactly one vertex of $G$.  Our goal here is to study the diameter 
of this reconfiguration graph, denoted $\diam \C_k(G)$.  In general, $\C_k(G)$
may be disconnected, in which case its diameter is infinite.  For example
$\C_2(C_{2s})$ consists of two isolated vertices (here $C_{2s}$ is a cycle of
length $2s$).  More strongly, for each integer $k\ge 2$ there exist $k$-regular
graphs $G$ such that $\C_{k+1}(G)$ has isolated vertices. The simplest example
is the clique $K_{k+1}$, but this is also true for every $k$-regular graph with a
$(k+1)$-colouring $\a$ such that all $k+1$ colours appear on each closed
neighbourhood; such colourings are called \emph{frozen}.

To avoid a colouring $\a$ being frozen, some vertex $v$ must have
some colour unused by $\a$ on its closed neighbourhood, $N[v]$.  And to avoid $\C_k(G)$ being
disconnected, every induced subgraph $H$ must contain such a
vertex $v$ with some colour unused by $\a$ on $N[v]\cap V(H)$.  Thus, it
is natural to consider the degeneracy of $G$, denoted $\degen(G)$.  

Our examples
of frozen colourings above show that, if we aim to have $\C_k(G)$ connected,
then in general it is not enough to require $k\ge \degen(G)+1$.  However, an easy
inductive argument shows that a slightly stronger condition is
sufficient: If $k\ge \degen(G)+2$, then $\C_k(G)$ is connected.
(Earlier, Jerrum~\cite{jerrum} proved that $k\ge \Delta(G)+2$ suffices, but the
arguments are similar.)
This inductive proof only
yields that $\diam \C_k(G)\le 2^{|V(G)|}$.  But
Cereceda~\cite[Conjecture~5.21]{cereceda2007mixing} conjectured something
much stronger.

\begin{cereceda}
\label{cereceda-conj}
For an $n$-vertex graph $G$ with $k \ge \degen(G) + 2$, the diameter of
$\C_k(G)$ is $O(n^2)$.
\end{cereceda}

Bousquet and Heinrich~\cite{bousquet2019polynomial} proved that
$\diam \C_k(G) = O_d(n^{\degen(G)+1})$, which is
the current best known bound.
When $k \ge \Delta(G)+2$, Cereceda~\cite[Proposition~5.23]{cereceda2007mixing} proved
that $\diam \C_k(G) = O(n\Delta) = O(n^2)$. In particular,
Cereceda's conjecture
is
true for regular graphs.  But, as we show here, if $k\ge \Delta(G)+2$, then
in fact we have the stronger bound $\diam \C_k(G)\le 2n$, and we conjecture $\diam \C_k(G)\le \lfloor 3n/2 \rfloor$.

A folklorish result observes that $\diam \C_k(G)=O(n)$ when $k$ is large relative to $\Delta(G)$
(for example, see \cite[Section~3.1]{MN20}).
But until now, it seems that no one has investigated exact values of the
diameter (say, when $k\ge \Delta(G)$).  This is the main goal of our paper.  
Before stating our two main conjectures, and our results supporting them, we
present an easy lower bound on $\C_k(G)$ in terms of the matching number $\mu(G)$.

\begin{prop}
\label{lower-bound-prop}
For a graph $G$, if $k\ge 2\Delta(G)$, then $\diam\C_k(G) \ge n(G)+\mu(G)$.
\end{prop}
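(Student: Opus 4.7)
The plan is to exhibit two proper $k$-colourings $\alpha, \beta$ of $G$ with the following properties, relative to a fixed maximum matching $M$ of $G$: (i) $\alpha(v) \neq \beta(v)$ for every $v \in V(G)$, and (ii) $\alpha(u) = \beta(v)$ and $\alpha(v) = \beta(u)$ for every edge $uv \in M$. Condition (i) will force every vertex to be recoloured at least once in any reconfiguration sequence from $\alpha$ to $\beta$, while condition (ii) will force one additional recolouring per edge of $M$, giving the desired bound of $n(G) + \mu(G)$.

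To construct $\alpha$ and $\beta$, I would order the vertices of $G$ so that the two endpoints of each edge of $M$ appear consecutively, as pairs $(v_{2i-1}, v_{2i})$ for $1 \le i \le \mu(G)$, followed by the unmatched vertices. I then process the vertices in this order, assigning colours to $\alpha$ and $\beta$ simultaneously. At a matched pair $(v_{2i-1}, v_{2i})$, I pick distinct colours $a, b$ and set $\alpha(v_{2i-1}) = \beta(v_{2i}) = a$ and $\alpha(v_{2i}) = \beta(v_{2i-1}) = b$. The colour $a$ must avoid the $\alpha$-colours already assigned to neighbours of $v_{2i-1}$ and the $\beta$-colours already assigned to neighbours of $v_{2i}$; each set has size at most $\Delta(G) - 1$ (since the matched partner is unprocessed), so at least $k - 2\Delta(G) + 2 \ge 2$ choices remain for $a$. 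The colour $b$ is then selected analogously, leaving at least one valid option. For an unmatched vertex $w$, I pick $\alpha(w)$ and then $\beta(w) \neq \alpha(w)$ avoiding the relevant neighbour colours; since $k \ge 2\Delta(G)$, this is again feasible.

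For the lower bound on the distance, fix any reconfiguration sequence from $\alpha$ to $\beta$ and let $r(v)$ denote the number of times vertex $v$ is recoloured, so the length of the sequence is $\sum_{v \in V(G)} r(v)$. Property (i) yields $r(v) \ge 1$ for every $v$. For each edge $uv \in M$, I claim $r(u) + r(v) \ge 3$: otherwise $r(u) = r(v) = 1$, and assuming without loss of generality that $u$ is recoloured before $v$, immediately after that step $u$ has colour $\beta(u) = \alpha(v)$ while $v$ still has colour $\alpha(v)$, contradicting properness on the edge $uv$. Summing over vertices then gives
\[
\sum_{v \in V(G)} r(v) \;=\; \sum_{uv \in M}\bigl(r(u) + r(v)\bigr) \;+\; \sum_{w \text{ unmatched}} r(w) \;\ge\; 3\mu(G) + \bigl(n(G) - 2\mu(G)\bigr) \;=\; n(G) + \mu(G).
\]

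The main obstacle is the construction step: one must verify that the greedy process succeeds under the tight hypothesis $k \ge 2\Delta(G)$. The key trick is to order matched pairs consecutively, so that when choosing the colour $a$ for the pair $(v_{2i-1}, v_{2i})$, each of $v_{2i-1}$ and $v_{2i}$ contributes at most $\Delta(G)-1$ forbidden colours rather than $\Delta(G)$. The distance lower bound itself is then a routine double-counting argument once the swap structure is in place.
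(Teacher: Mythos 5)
Your proof is correct, and it arrives at the same pair of colourings $(\alpha,\beta)$ (swap on $M$, disagree everywhere) via a genuinely different construction. The paper introduces an auxiliary graph $\widehat{G}$, obtained by adding the edge $vy$ whenever $vw,xy \in M$ and $wx \in E(G)$; it then observes $\Delta(\widehat{G}) \le 2\Delta(G)-1$, takes $\alpha$ to be a proper $2\Delta(G)$-colouring of $\widehat{G}$, and defines $\beta$ by swapping on $M$ (the $\widehat{G}$-edges precisely guarantee that the swap preserves properness in $G$). You instead colour $\alpha$ and $\beta$ simultaneously by a single greedy pass in which each matched pair is processed as a unit, and you check directly that $k \ge 2\Delta(G)$ leaves enough colours at each step. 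Your version is more self-contained and avoids the detour through $\widehat{G}$; the paper's version is chosen because the graph $\widehat{G}$ is re-used immediately afterwards to weaken the hypothesis from $k \ge 2\Delta(G)$ to conditions like $\chi(\widehat{G}) \le k-1$, so bounding $\chi(\widehat{G})$ is the point there. Your distance lower bound (via $r(v)$ and $r(u)+r(v)\ge 3$ on matched edges) is the same argument as the paper's, just slightly more formal. One small point worth making explicit in your greedy step: you also need $a \ne b$ so that both $\alpha$ and $\beta$ are proper on the matching edge $v_{2i-1}v_{2i}$ and so that $\alpha$ and $\beta$ disagree on both endpoints; you do fold this in when selecting $b$, but it deserves a word.
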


\begin{proof}
Let $M$ be a maximum matching in $G$.  Form $\widehat{G}$ from $G$ as follows.  If
$vw,xy\in M$ and $wx\in E(G)$, then add $vy$ to $\widehat{G}$.  Note that
$\Delta(\widehat{G})\le
2\Delta(G)-1$.  So $\chi(\widehat{G})\le 2\Delta(G)$.  Let $\a$ be a $2\Delta(G)$-colouring
of $\widehat{G}$.  Form $\b$ from $\a$ by swapping colours on endpoints of each edge
in $M$ and, for each $v\in V(G)$ not saturated by $M$, picking $\b(v)$
outside of $\{\a(v)\}\cup \bigcup_{w\in N(v)}\b(w)$.  To recolour $G$
from $\a$ to $\b$, every vertex must be recoloured.  Further, for each edge
$e\in M$, the first endpoint $v$ of $e$ to be recoloured must initially receive a
colour other than $\b(v)$, so must be recoloured at least twice.  Thus, $\diam
\C_k(G)\ge n(G)+\mu(G)$, as desired.
\end{proof}

At the end of Section~\ref{defs:sec}, we mention various special cases in which
the hypothesis of Proposition~\ref{lower-bound-prop} can be weakened. We pose it as an open question whether the bound in
Proposition~\ref{lower-bound-prop} holds for all $k\ge \Delta(G)+2$.

We study this reconfiguration problem in the more general contexts of list
colouring and correspondence colouring, both of which we define in 
Section~\ref{defs:sec}.  These more general contexts offer the added advantage
of enabling us to naturally prescribe fewer allowed colours for vertices of
lower degree.  Analogous to $\C_k(G)$, for a graph $G$ and a list-assignment $L$ or
correspondence cover $(L,H)$ for $G$, we define the \emph{$L$-reconfiguration graph
$\C_L(G)$} or \emph{$(L,H)$-reconfiguration graph $\C_{(L,H)}(G)$} of $G$. 
Generalizing our constructions of frozen $k$-colourings
above, it is easy to show that $\C_L(G)$ and $\C_{(L,H)}(G)$ can contain frozen
$L$-colourings (and thus be disconnected) if we require only that all $v\in V(G)$
satisfy $\lvert L(v) \rvert =\deg(v)+1$.  Thus, we adopt a
slightly stronger hypothesis: all $v \in V(G)$ satisfy $\lvert L(v) \rvert \ge
\deg(v)+2$.  Now we can state our two main conjectures.

\begin{conj}[List Colouring Reconfiguration Conjecture]
\label{conj:main_list}
For a graph $G$, if $L$ is a list-assignment such that $\lvert L(v) \rvert
\ge \deg(v)+2$ for every $v \in V(G)$, then $\diam \C_L(G) \le n(G)+\mu(G)$.
\end{conj}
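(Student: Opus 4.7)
My plan is to establish the bound via an explicit recolouring algorithm, charging ``extra'' recolourings against a matching of $G$. Fix $L$-colourings $\a$ and $\b$ and set $S = \{v : \a(v) \neq \b(v)\}$; since every $v \in S$ must be recoloured at least once, the task reduces to keeping the number of vertices recoloured more than once at most $\mu(G)$.

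I will encode obstructions via the \emph{blocking digraph} $D$ on $S$, with an arc $v \to w$ precisely when $vw \in E(G)$ and $\a(w) = \b(v)$. The out-neighbours of $v$ in $D$ are exactly the still-unprocessed vertices preventing $v$ from being recoloured directly to $\b(v)$. If $D$ is acyclic, processing sinks in reverse topological order recolours each $v \in S$ exactly once: when $v$ is a sink, every unprocessed $S$-neighbour $u$ satisfies $\a(u) \neq \b(v)$ (else $v \to u$), each already-recoloured neighbour carries a $\b$-value different from $\b(v)$, and any $u \notin S$ has $\a(u) = \b(u) \neq \b(v)$. So at most $|S| \le n(G)$ recolourings suffice with no extras, and all extras must come from cycles of $D$.

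A directed cycle $v_1 \to v_2 \to \cdots \to v_k \to v_1$ in $D$ traces a $k$-cycle of $G$ via the edges $v_i v_{i+1}$. An \emph{isolated} such cycle can be cleared using $k+1$ recolourings by the standard temporary-colour trick: $|L(v_1)| \ge d(v_1)+2$ guarantees at least two colours in $L(v_1)$ avoiding the current colours on $N(v_1)$, enough to pick a temporary $c$ that additionally avoids $\b(v_k)$ (while $\b(v_1) = \a(v_2)$ is automatically forbidden); then recolour $v_1 \mapsto c$, sweep $v_k, v_{k-1}, \ldots, v_2$ to their $\b$-values, and finish with $v_1 \mapsto \b(v_1)$. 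The plan is to iterate: at each stage peel off a strongly connected component of the current blocking digraph, clear it using a feedback vertex set worth of temporary recolourings, and recurse on the acyclic remainder.

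The main obstacle, and the reason the statement remains a conjecture, is bounding the total number of extras by $\mu(G)$. Vertex-disjoint directed cycles in $D$ map to vertex-disjoint cycles in $G$, from each of which one edge can be selected to form a matching, so any cycle packing in $D$ has size at most $\mu(G)$. But a strongly connected component of $D$ can have feedback vertex number strictly larger than its maximum cycle packing, so a naive feedback-set algorithm may over-count. To bridge this gap I would exploit the fact that $|L(v)| \ge d(v)+2$ leaves two free colours at every vertex throughout the process, enabling Kempe-like bichromatic swaps inside a stubborn SCC, with the aim of decomposing long interlocking cycles into vertex-disjoint 2-cycles (swaps along edges of $G$) that then cleanly correspond to edges of a matching. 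Turning this feedback-type bound into an honest matching bound is the delicate step; it is precisely the gap between the paper's unconditional $n(G)+2\mu(G)$ bound and the conjectured $n(G)+\mu(G)$.
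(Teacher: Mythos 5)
The statement you were given is a \emph{conjecture}, not a theorem---the paper does not prove it, and you correctly recognise this in your final paragraph. There is no ``paper's own proof'' to compare against, so the most useful thing I can do is measure your proposed framework against what the paper does prove.

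Your ``blocking digraph'' is precisely the paper's colour-shift digraph $D_{\a,\b}$ (Definition~6), and your observation that one can peel off the acyclic/reducible part and recurse on strongly connected components is essentially the paper's Lemma~\ref{lem:StructureMinimalCounterexample}: in a minimum counterexample $D_{\a,\b}$ must be strongly connected. Your sink-processing argument for the acyclic case is correct, your isolated-cycle argument is correct (the $|L(v_1)|\ge d(v_1)+2$ hypothesis indeed leaves two spare colours, at most one of which collides with $\b(v_k)$), and your diagnosis of the bottleneck is exactly right: a strongly connected component of $D_{\a,\b}$ can have feedback vertex number strictly larger than its directed cycle packing number, and it is the cycle packing---not the feedback set---that projects down to a matching of $G$. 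Indeed, the paper's lower bound (Observation~\ref{lower-bound:obs}) only certifies a packing of \emph{bidirected} edges (digons), which is even smaller than the directed cycle packing, so neither quantity is a priori the right invariant to match against the conjectured $\mu(G)$. The paper's two unconditional results sidestep your bottleneck rather than overcoming it: Theorem~\ref{thm:lists:d+2} gets $n+2\mu(G)$ by inducting on a vertex saturated by every maximum matching (after Gallai's factor-critical reduction handles the base case via Lemma~\ref{lem:lists:d+2}, which is a colour-class argument, not a digraph one), while Theorem~\ref{thr:proofmainconj_2d+1} gets the exact bound only under the stronger hypothesis $|L(v)|\ge 2d(v)+1$, using the Edmonds--Gallai decomposition to buffer $V_2(G)$ and then recolouring each factor-critical component ``monotonically by colour value.'' For specific classes (complete bipartite graphs, cycles, cactuses) the paper combines the strong-connectivity reduction with ad hoc structural arguments, not a general Kempe-swap machinery of the kind you sketch. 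So: your framework is sound, matches the paper's reductions, and you have correctly located the open gap; you just have not closed it, and neither has the paper.
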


\begin{conj}[Correspondence Colouring Reconfiguration Conjecture]
\label{conj:main_DP}
For a graph $G$, if $(L,H)$ is a correspondence cover such that $\lvert L(v) \rvert
\ge \deg(v)+2$ for every $v \in V(G)$, then $\diam \C_{(L,H)}(G) \le n(G)+\tau(G)$.
\end{conj}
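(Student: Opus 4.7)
My plan is to attack Conjecture~\ref{conj:main_DP} by a three-phase recolouring pivoting on a minimum vertex cover. Let $S$ be a minimum vertex cover of $G$ with $|S|=\tau(G)$, and let $I:=V(G)\setminus S$, which is independent. Since $n(G)+\tau(G)=|I|+2|S|$, the goal is to recolour every $v\in I$ exactly once and every $u\in S$ at most twice. I would achieve this by constructing an intermediate proper correspondence colouring $\gamma$ that agrees with $\alpha$ on $I$ and satisfies $\gamma(u)\in L(u)\setminus\{\alpha(u),\beta(u)\}$ for each $u\in S$.

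The recolouring sequence then has three phases. Phase~1 moves from $\alpha$ to $\gamma$ by recolouring $S$ one vertex at a time (at most $|S|$ steps). Phase~2 recolours each $v\in I$ directly from $\alpha(v)$ to $\beta(v)$ (exactly $|I|$ steps). Phase~3 completes the transition by recolouring each $u\in S$ from $\gamma(u)$ to $\beta(u)$ (at most $|S|$ steps). The total is at most $n(G)+\tau(G)$, as required. For the sequence to be legal, $\gamma$ and the processing orders must ensure: (i)~$\gamma$ is proper and reachable from $\alpha$ one vertex at a time; (ii)~for every edge $uv$ with $v\in I$, the colour $\beta(v)$ is compatible with $\gamma(u)$ under the correspondence cover; and (iii)~the vertices of $S$ can be ordered so that, in Phase~3, each $\beta(u)$ remains compatible with the $\gamma$-colours of its not-yet-processed $S$-neighbours (compatibility with the $\beta$-colours of processed neighbours is automatic from properness of~$\beta$).

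The main obstacle is fitting the construction of $\gamma$ within the budget $|L(u)|\ge d(u)+2$. At each $u\in S$, the forbidden colours naively include the $d(u)$ currently blocked by the $\alpha$-colours of $u$'s neighbours, as many as $|N(u)\cap I|$ further colours blocked by the $\beta$-colours of $I$-neighbours (from~(ii)), and the pair $\{\alpha(u),\beta(u)\}$, totalling up to $2d(u)+2$. To fit inside $d(u)+2$, one must exploit that the correspondence matching on each edge $uv$ is a partial injection, so the two constraints contributed by $v$ (compatibility with $\alpha(v)$ and with $\beta(v)$) block only two specific colours at $u$ and should be absorbed into the existing forbidden set whenever possible; and one should process $S$ in a degeneracy-style ordering on $G[S]$ so that Phases~1 and~3 only face back-neighbour constraints. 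A back-up route, if the global choice of $\gamma$ resists, is to induct on $|V(G)|$ by deleting some $u\in S$: since $\tau(G-u)\le\tau(G)-1$, the inductive bound yields a sequence of length at most $n(G)+\tau(G)-2$ on $G-u$, leaving room to splice in two moves for $u$. The obstacle there is guaranteeing that two well-placed recolourings of $u$ always keep $u$ compatible with its evolving neighbours throughout the inductive sequence.
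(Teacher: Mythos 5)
The statement you are attacking is one of the paper's two central \emph{conjectures}, which the authors explicitly leave open. The paper proves the bound $n(G)+\tau(G)$ only under the stronger hypothesis $|L(v)|\ge 2\deg(v)+1$ (Theorem~\ref{thr:proofmainconj_2d+1_DP}); under $|L(v)|\ge\deg(v)+2$ it proves only $n(G)+2\tau(G)$ (Theorem~\ref{thm:lists:d+2_DP}), plus the conjectured bound for a few sparse classes (Theorem~\ref{thr:subcubic_cover}). Your three-phase plan through an intermediate colouring $\gamma$ on a minimum vertex cover $S$ is precisely the proof of the first of these theorems, and the obstacle you flag is exactly the barrier, unresolved. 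Concretely: order $S$ as $u_1,\dots,u_{|S|}$ for Phase~1 and run Phase~3 in reverse, and split $N(u_i)$ into $p$ earlier $S$-neighbours, $q$ later $S$-neighbours, and $r$ neighbours in $I$. Then $\gamma(u_i)$ must avoid one colour per earlier $S$-neighbour (conflict with $\gamma(u_j)$ in Phase~1) but two per later $S$-neighbour (conflict with $\alpha(u_j)$ in Phase~1 and with $\beta(u_j)$ in Phase~3) and two per $I$-neighbour ($\alpha(v)$ in Phase~1 and $\beta(v)$ in Phase~2), i.e.\ up to $p+2q+2r$ forbidden colours. Against $|L(u_i)|\ge\deg(u_i)+2=p+q+r+2$, this goes through only when $q+r\le 1$; any cover vertex with two or more $I$-neighbours defeats the plan, and no ordering can reduce $r$. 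The proposed ``absorption'' does not help: since $\alpha(v)\ne\beta(v)$ and each correspondence matching is injective, the two colours at $u$ matched to $\alpha(v)$ and $\beta(v)$ are genuinely distinct, and there is no reason either coincides with a colour already excluded by another neighbour.

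The inductive backup is exactly the paper's proof of Theorem~\ref{thm:lists:d+2_DP}, and it inherently costs three extra moves per deleted cover vertex, not two. Pigeonhole on $2\deg(v)<2|L(v)|$ yields a colour $c\in L(v)$ with at most one conflict among $\{\alpha(w),\beta(w): w\in N(v)\}$; but in general $c\ne\beta(v)$, and the remaining conflict must be cleared by first recolouring some $w_0\in N(v)$, giving one move for $w_0$, one for $v\to c$, and one at the end for $v\to\beta(v)$, hence $3+n(G-v)+2\tau(G-v)=n(G)+2\tau(G)$. Reducing to two moves would require a colour of $L(v)$ conflicting with none of up to $2\deg(v)$ relevant colours at the neighbours, which pigeonhole only guarantees when $\deg(v)\le 1$. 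In short, both routes as described reproduce existing theorems of the paper and stop short of the conjecture; a genuinely new mechanism is needed to save the extra $\tau(G)$ moves.
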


Here $\tau(G)$ denotes the vertex cover number of $G$. For brevity, we often call these conjectures the 
List Conjecture and the
Correspondence Conjecture.
Our aim in this paper is to provide significant evidence for both conjectures.
Due to Proposition~\ref{lower-bound-prop}, the List Conjecture
is best possible, when the lists are large enough.  
We will soon give easy constructions showing that both conjectures are best
possible whenever all $v$ satisfy $|L(v)|\ge d(v)+2$.  But we defer these
constructions until Section~\ref{defs:sec}, where we formally define list and
correspondence colourings.

\begin{prop2}
For every graph $G$ (i) there exists a list-assignment $L$ such that
$|L(v)|=d(v)+2$ for all $v$ for which $\diam \C_L(G)=n(G)+\mu(G)$ and (ii) there
exists a correspondence cover $(L,H)$ such that $|L(v)|=d(v)+2$ for all
$v$ for which $\diam \C_{(L,H)}(G)=n(G)+\tau(G)$.
\end{prop2}

For both the 
\hyperref[conj:main_list]{List Conjecture} 
and the 
\hyperref[conj:main_DP]{Correspondence Conjecture}, it is trivial to
construct examples that need at least $n(G)$ recolourings.  We simply require
that $\a(v)\ne \b(v)$ for all $n(G)$ vertices $v$.  So we view these conjectured
upper bounds as consisting of a ``trivial'' portion, $n(G)$ recolourings, and a
``non-trivial'' portion, $\mu(G)$ or $\tau(G)$ recolourings.  
We give two partial results toward each conjecture.
Our first result proves both conjectures up to a factor of 2 on the non-trivial
portions of these upper bounds.

\begin{theorem}
\label{factor2:thm}
(i) For every graph $G$ and list-assignment $L$ with $|L(v)|\ge d(v)+2$ for all
$v\in V(G)$ we have $\diam \C_L(G)\le n(G)+2\mu(G)$.  (ii) For every graph $G$
and correspondence cover $(L,H)$ with $|L(v)|\ge d(v)+2$ for all $v\in V(G)$ we
have $\diam \C_{(L,H)}(G)\le n(G)+2\tau(G)$.
\end{theorem}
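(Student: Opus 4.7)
The plan is to exploit the independence of the complement of a maximum matching $M$ (for part~(i)) and of a minimum vertex cover $C$ (for part~(ii)). Write $U := V(G) \setminus V(M)$ and $I := V(G) \setminus C$; both sets are independent. For part~(i), since $n = |U| + 2\mu$, the bound $n+2\mu = |U| + 4\mu$ corresponds to recolouring each $w \in U$ at most once and each $v \in V(M)$ at most twice. For part~(ii), $n+2\tau = |I| + 3\tau$ corresponds to recolouring each $w \in I$ at most once and each $v \in C$ at most three times.

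For part~(i), I would construct the recolouring sequence from $\a$ to $\b$ in three phases. In Phase~1, each $v \in V(M)$ is moved from $\a(v)$ to a carefully chosen intermediate colour $c_v \in L(v)$. In Phase~2, each $w \in U$ is moved directly to $\b(w)$. In Phase~3, each $v \in V(M)$ is moved from $c_v$ to $\b(v)$. Each $w \in U$ is thus recoloured once and each $v \in V(M)$ at most twice, summing to $\le |U|+4\mu=n+2\mu$.

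The critical choices are the intermediate colours $c_v$. For each $v \in V(M)$ the colour $c_v$ must be proper at the moment of its Phase~1 step, must avoid $\b(w)$ for every $w \in N(v)\cap U$ (so that Phase~2 can proceed directly), and must be compatible with the Phase~3 move $c_v \to \b(v)$. The canonical choice $c_v = \b(v)$ satisfies the $U$-condition automatically by properness of $\b$; when $\b(v)$ coincides with $\a(w)$ for some $w \in N(v)$, the slack $|L(v)|\ge d(v)+2$ yields at least two alternative free colours, from which we select a suitable $c_v$. The main obstacle is scheduling Phase~3: the move $c_v \to \b(v)$ may be blocked by a $V(M)$-neighbour $w$ still at $c_w=\b(v)$. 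I would process $V(M)$ in an order derived from an auxiliary digraph encoding these blocking relations, resolving any cyclic dependencies using extra recolourings that are absorbed into the $2\mu$ budget.

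For part~(ii), the same template applies with $C$ and $I$ replacing $V(M)$ and $U$, and with each $v \in C$ allotted three recolourings rather than two. The extra budget compensates for the absence in the correspondence setting of the symmetric ``swap'' flexibility that list-assignments naturally provide. With this extra slack, the analogous three-phase construction yields the bound $|I|+3\tau = n+2\tau$; the main obstacle mirrors part~(i), but the analysis is slightly more delicate because correspondence conflict pairings in $H$ are more rigid than simple list-colour equalities.
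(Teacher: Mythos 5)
Your approach differs fundamentally from the paper's. The paper proves part (i) by induction on $n(G)$ using the Gallai/Edmonds structure: if every vertex is avoided by some maximum matching, then $G$ is factor-critical, so $n+2\mu = 2n-1$ and a separate lemma gives the bound; otherwise a vertex $v$ saturated by every maximum matching is found, a colour $c\in L(v)$ avoiding all but at most one of the colours $\{\a(w),\b(w): w\in N(v)\}$ is chosen by pigeonhole, $v$ is parked at $c$ (after possibly moving one neighbour), and the argument recurses on $G-v$ with $c$ deleted from the neighbours' lists. Part (ii) is an analogous induction on $\tau(G)$. Your proposal instead attempts a global three-phase scheme with no recursion. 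These are genuinely different routes, but yours has two substantive gaps that I do not see how to repair.

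First, the existence of a suitable intermediate colour $c_v$ is not established. At the moment $v\in V(M)$ is recoloured in Phase~1, its neighbours in $U$ still carry $\a$, so $c_v$ must avoid $\a(w)$ and (for Phase~2 to succeed) also $\b(w)$ for each $w\in N(v)\cap U$; in addition $c_v$ must avoid the current colours of $V(M)$-neighbours, and (for Phase~3) possibly their $\b$-colours. Already the $U$-constraints alone can forbid up to $2\lvert N(v)\cap U\rvert$ colours, plus $\lvert N(v)\cap V(M)\rvert$ from the $V(M)$-neighbours, which can exceed $d(v)+1$. With only $d(v)+2$ colours in $L(v)$ there may be no valid $c_v$, and no pigeonhole-type argument is offered to guarantee one. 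Second, even if intermediate colours existed, the claim that cyclic blocking in Phase~3 is ``resolved using extra recolourings that are absorbed into the $2\mu$ budget'' is vacuous: the budget assigns exactly one Phase~1 step and one Phase~3 step to each of the $2\mu$ matched vertices, with no slack whatsoever; any extra step pushes past $n+2\mu$. A concrete check on $K_4$ with $\alpha=(1,2,3,4)$, $\beta=(2,1,4,3)$, lists $[5]$, and the natural Phase~3 order $v_1,v_2,v_3,v_4$ already produces an unsatisfiable system of constraints on $(c_1,\dots,c_4)$; a different order happens to work there, but you give no mechanism for choosing the order in general. The same issues afflict your sketch of part~(ii), where the role of the third allotted recolouring for $C$-vertices in a ``three-phase'' scheme is left unspecified. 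The paper's vertex-deletion induction sidesteps scheduling entirely, which is precisely what makes it go through.
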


Bousquet, Feuilloley, Heinrich, and Rabie~\cite[following Question 1.3]{BFHR22}
asked about the diameter of $\C_k(G)$ when $k=\Delta(G)+2$.
In this case, a result of Bonamy and Bousquet~\cite[Theorem~1]{BB18} implies,
for an $n$-vertex graph $G$ and $k=\Delta(G)+2$, that
$\diam~\C_k(G)=O(\Delta(G)n)$.  The authors of~\cite{BFHR22}
asked whether it is possible to remove this dependency on $\Delta(G)$.
We answer their question affirmatively.
Always $\mu(G)\le n(G)/2$, so Theorem~\ref{factor2:thm}(i) implies
that $\diam \C_k(G)\le 2n(G)$ when $k=\Delta(G)+2$.
%

To complement Theorem~\ref{factor2:thm}, we prove both conjectured
upper bounds when $|L(v)|$ is sufficiently large.

\begin{theorem}
\label{big-lists:thm}
(i) For every graph $G$ and list-assignment $L$ with $|L(v)|\ge 2d(v)+1$ for all
$v\in V(G)$ we have $\diam \C_L(G)\le n(G)+\mu(G)$.  (ii) For every graph $G$
and correspondence cover $(L,H)$ with $|L(v)|\ge 2d(v)+1$ for all $v\in V(G)$ we
have $\diam \C_{(L,H)}(G)\le n(G)+\tau(G)$.
\end{theorem}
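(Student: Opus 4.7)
The plan is to transform $\a$ into $\b$ via an intermediate proper colouring $\c$ that agrees with $\b$ off a small ``pivot set'' $X \subseteq V(G)$. Non-pivot vertices are recoloured once (directly $\a\to\b$), while each pivot is recoloured twice (through $\c(v)$), giving at most $n(G)+|X|$ recolourings in total. The sequence is organised into three phases: Phase~1 recolours the pivots $\a\to\c$, Phase~2 recolours the non-pivots $\a\to\b$, and Phase~3 recolours the pivots $\c\to\b$. Both bounds in the theorem then reduce to choosing $X$ and $\c$ with $|X|\le\mu(G)$ for (i) and $|X|\le\tau(G)$ for (ii).

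In both cases I would pick $\c(v)$ for each $v\in X$ so that, under the matching $M_{vw}$ at each incident edge, $\c(v)$ avoids both $\a(w)$ and $\b(w)$ for every $w\in N(v)$. This ``double-avoidance'' forbids at most $2d(v)$ colours, and the hypothesis $|L(v)|\ge 2d(v)+1$ leaves at least one admissible colour. The double-avoidance, together with the properness of $\a,\b,\c$, is exactly what guarantees that Phases~1 and~3 admit valid internal orderings: throughout the process every neighbour $w$ of a pivot $v$ carries a colour of the form $\a(w)$, $\b(w)$, or $\c(w)$, all of which the chosen $\c(v)$ is compatible with.

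For (ii), take $X$ to be a minimum vertex cover $T$ of $G$, so $V(G)\setminus T$ is independent and Phase~2 is trivially executable. The pivot colouring $\c$ on $T$ can then be chosen vertex by vertex, using the slack between the $2d(v)+1$ list size and the $2d(v)$ double-avoidance constraints to absorb the properness constraints on $G[T]$. For (i) the bound $\mu(G)$ is sharper, so we instead let $M$ be a maximum matching and consider the ``swap subgraph'' $F$ with $vw\in E(F)\iff\a(v)=\b(w)$ and $\a(w)=\b(v)$. A short check shows that the ordered pair $(\a(v),\b(v))$ is flipped across each $F$-edge within any component of $F$, so $F$ is bipartite; by K\H{o}nig's theorem it admits a vertex cover of size $\mu(F)\le\mu(G)$, which we take as $X$.

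The main obstacle is verifying that Phase~2 admits a topological ordering on $V(G)\setminus X$ in case (i), where $V(G)\setminus X$ need not be independent in $G$. The $2$-cycles of the natural ``dependency'' digraph on $V(G)\setminus X$, with $u\to w$ iff $uw\in E(G)$ and $\b(u)=\a(w)$, are exactly the swap edges, which our choice of $X$ covers; however, directed cycles of length at least three can exist and require a separate argument. The intended route is to show that each such cycle contributes to $\mu(G)$ in a way that can be amortised, allowing the choice of $X$ to be refined within its $\mu(G)$ budget to cover all cycles. A further subtlety in both cases arises when $X$ is not independent in $G$: the resulting properness constraints on $\c$ risk exceeding the $2d(v)+1$ list budget. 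The resolution exploits the observation that once an adjacent pivot $w$ has been moved to $\c(w)$ it no longer carries $\a(w)$ or $\b(w)$, so the pivot properness can be combined with, rather than added to, the double-avoidance, and by interleaving Phases~1 and~3 for adjacent pivot pairs.
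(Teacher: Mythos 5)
Your plan for part~(ii) is essentially the paper's: choose a minimum vertex cover $S$ as the pivot set, park each $v\in S$ on an intermediate colour that conflicts with nothing a neighbour will carry, recolour $V(G)\setminus S$ (an independent set) directly to $\beta$, and finally finish $S$. The paper makes the intermediate choice robust by having $\gamma(v)$ avoid the \emph{current} colour of each neighbour (rather than $\alpha(w)$), which disposes of the adjacent-pivot subtlety you flag without any interleaving: once $w\in S$ has been parked, the constraint for $v$ automatically refers to $\gamma(w)$, and later, when $v$ is set to $\beta(v)$ in the last phase, the choice of $\gamma(w)$ already avoided $\beta(v)$. So part~(ii) is correct and morally the same argument.

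Part~(i) is where there is a real gap, and you have correctly located it but not closed it. Choosing $X$ to be a vertex cover of the ``swap subgraph'' $F$ only kills the digons of $D_{\alpha,\beta}$; for a topological order on $V(G)\setminus X$ to exist you actually need $X$ to be a feedback vertex set of the whole digraph $D_{\alpha,\beta}$, and you would need such a set of size at most $\mu(G)$. This is not implied by $\tau(F)=\mu(F)\le\mu(G)$, and it is not clear that a minimum FVS of $D_{\alpha,\beta}$ is bounded by $\mu(G)$ in general; the ``amortisation'' you gesture at is exactly the missing content. The paper resolves this with two ideas you do not have. First, a total order on the colour set yields an explicit FVS: $V_1=\{v:\alpha(v)>\beta(v)\}$ hits every directed cycle of $D_{\alpha,\beta}$ (along a cycle one has $\beta(v_i)=\alpha(v_{i+1})$, and if all $\alpha(v_i)\le\beta(v_i)$ the inequalities collapse, forcing $\alpha(v_i)=\alpha(v_{i+1})$, contradicting properness of $\alpha$). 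Taking the smaller of $V_1$ and $V_2$ gives an FVS of size at most $\lfloor n/2\rfloor$, and then recolouring non-pivots to $\beta$ in decreasing order of $\beta$-value supplies the valid Phase~2 ordering (paper's Lemma~\ref{lem:lists:2d+1}). Second, $\lfloor n/2\rfloor$ may exceed $\mu(G)$; the paper bridges this with the Edmonds--Gallai decomposition (Theorem~\ref{EG-decomp}): one first parks the vertices of $V_2(G)$, then runs the above lemma on each component of $G-V_2(G)$, and the identity $\mu(G)=\tfrac12(|V(G)|-c(V_1(G))+|V_2(G)|)$ together with the parity of the component sizes makes the step count come out to exactly $n(G)+\mu(G)$. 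Without a replacement for both of these ingredients, your route for (i) does not go through.

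One small positive remark: your observation that $F$ is bipartite (since an odd $F$-cycle would force $\alpha(v)=\beta(v)$ and then an $\alpha$-conflict) is correct, and the resulting $\tau(F)=\mu(F)\le\mu(G)$ bound is sound; it just isn't the quantity that controls Phase~2.
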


In Section~\ref{defs:sec} we present definitions and notation, as well as some
easy lower bounds on diameters of reconfiguration graphs.
In Section~\ref{main-thms-list:sec} we prove the list colouring portions of
Theorems~\ref{factor2:thm} and~\ref{big-lists:thm}; and in
Section~\ref{main-thms-corr:sec} we prove the correspondence colouring portions.
In Section~\ref{trees:sec} we prove more precise results when $G$ is a tree (in
this case the correspondence problem is identical to the list problem). 

In Sections~\ref{classes-list:sec} and~\ref{classes-correspondence:sec}, we
conclude by proving one or both conjectures exactly for various graph classes,
such as complete bipartite graphs, subcubic graphs, cactuses, and graphs
with low maximum average degree.
These graphs are all sparse or have low chromatic number. 
On the other end of the sparsity spectrum,  
the \hyperref[conj:main_list]{List Conjecture} is
also true for all complete graphs, due to an argument of Bonamy and
Bousquet~\cite[Lemma~5]{BB18}.

\section{Definitions, Notation, and Easy Lower Bounds}
\label{defs:sec}

For a graph $G=(V,E)$, we denote its order 
by $n(G)$.
A \emph{matching} is a set of vertex disjoint edges in $G$.  
A matching $M$ of $G$ is \emph{perfect} if it saturates every vertex of $G$, and
$M$ is \emph{near-perfect} if it saturates every vertex of $G$ but one.
A
\emph{vertex cover} $S$ is a vertex subset such that every edge of $G$ has at least
one endpoint in $S$.  The \emph{matching number} $\mu(G)$ of $G$ is the size of
a largest matching.  The \emph{vertex cover number} $\tau(G)$ of $G$ is the
size of a smallest vertex cover.  (Recall that if $G$ is bipartite, then
$\mu(G)=\tau(G)$.)

The \emph{distance} $\dist(v,w)$ between two vertices $v,w \in V(G)$ is the length
of a shortest path in $G$ between $v$ and $w$.
The \emph{eccentricity} of a vertex $v$, denoted $\ecc(v)$,
is $\max_{w \in V} \dist(v,w).$ 
The \emph{diameter} $\diam(G)$ of $G$ is $\max_{v,w \in V} \dist(v,w)$, which is
equal to $\max_{v \in V} \ecc(v)$, while the \emph{radius} $\rad(G)$ of
$G$ is $\min_{v \in V} \ecc(v)$.
A diameter can also refer to a shortest path between $v$ and $w$ for which $\dist(v,w)=\diam(G).$
A graph $G$ is \emph{$k$-degenerate} if every non-empty subgraph $H$ contains a
vertex $v$ such that $\deg_H(v)\le k$.  The \emph{degeneracy} of $G$ is the
minimum $k$ such that $G$ is $k$-degenerate.

A directed graph or \emph{digraph} $D=(V,A)$ is analogous to a graph, except
that each edge is directed.
The directed edges in $A$ are  ordered pairs of vertices, and are called \emph{arcs}.

\begin{defi}
For a digraph $D$, the vertex cover number $\tau(D)$
and matching number $\mu(D)$ are defined to be $\tau(H_D)$ and $\mu(H_D)$, where $H_D$ is the undirected graph with vertex set $V(D)$ whose edges
correspond to the bidirected edges in $D$.
\end{defi}

\emph{Colourings} are mappings from $V$ to ${\mathbb N}$, and we denote them by
greek letters such as $\a, \b$, and $\gamma$.
A colouring $\a$ of $G$ is \emph{proper} if for every edge $vw\in E(G)$ we have $\a(v)\ne \a(w)$. 
Let $[k]:=\{1,2, \ldots, k\}$. A \emph{$k$-colouring} is a colouring using at most $k$ colours, which are generally from the set $[k]$.
The \emph{chromatic number} $\chi(G)$ of $G$ is the smallest $k$ such that $G$ has a proper $k$-colouring.

A \emph{list-assignment} $L$, for a graph $G$, assigns to each $v\in V(G)$ a set
$L(v)$ of natural numbers (``list'' of allowable colours).
A \emph{proper $L$-colouring} is a proper colouring $\a \colon V(G)\to {\mathbb N}$ such that $\a(v)\in L(v)$ for every $v\in V(G)$.
The \emph{list chromatic number} (or \emph{choice number}) $\chi_\ell(G)$ is
the least $k$ such that $G$ admits a proper $L$-colouring 
whenever  every $v \in V(G)$ satisfies $\lvert L(v) \rvert \ge k$.

A \emph{correspondence cover} $(L,H)$ for a graph $G$ assigns to each vertex
$v\in V(G)$ a set $L(v)$ of colours $\{(v,1),\ldots,$ $(v,f(v))\}$ and to each
edge $vw\in E(G)$
a matching between $L(v)$ and $L(w)$.  (In this paper, we typically consider
either $f(v)=d(v)+2$ or $f(v)=2d(v)+1$ for all $v\in V(G)$.)  Given a
correspondence cover $(L,H)$, an \emph{$(L,H)$-colouring} of $G$ is a function
$\a$ such that $\a\in L(v)$ for all $v$ and whenever $vw\in E(G)$ the edge
$\a(v)\a(w)$ is not an edge of the matching assigned to $vw$.  Here $H$ denotes
the union of the matchings assigned to all edges of $G$. (It is easy to
check that correspondence colouring generalises list colouring.)

The \emph{reconfiguration graph} $\C_k(G)$ has as its vertices the proper
$k$-colourings of $G$, and two $k$-colourings $\a$ and $\b$ are adjacent in
$\C_k(G)$ if they differ on exactly one vertex of $G$. 
In particular, if $k<\chi(G)$, then $\C_k(G)$ has no vertices.
(The reconfiguration graph was first defined in~\cite{CvdHJ08}, where it was
called the $k$-colour graph.) The distance between $k$-colourings $\a$
and $\b$ is at most $j$ if we can form $\b$, starting from $\a$, by recolouring
at most $j$ vertices, one at a time, so that after each recolouring the current
$k$-colouring of $G$ is proper.
Similarly, for a graph $G$ and list-assignment $L$, the \emph{reconfiguration
graph} $\C_L(G)$, or reconfiguration graph of the $L$-colourings of $G$, is the
graph whose vertices are the proper $L$-colourings of $G$.  Again, two
$L$-colourings $\a$ and $\b$ are adjacent in $\C_L(G)$ if they differ on
exactly one vertex.  This extension was first defined in~\cite{BC}.
For a correspondence cover $(L,H)$, the \emph{reconfiguration graph} $\C_{(L,H)}(G)$ is
defined analogously; its vertices are the correspondence colourings of $G$ and
two vertices of $\C_{(L,H)}(G)$ are adjacent precisely when their colourings
differ on exactly one vertex of $G$.
%
We will also use the associated colour-shift digraph for a graph $G$ and two
proper colourings, which is defined as follows.

\begin{defi}
Let $G=(V,E)$ be a graph and $\a$ and $\b$ be two proper colourings of $G$. We
define an associated colour-shift digraph $D_{\a, \b}=(V,A)$ where $\vc{vw}
\in A$ if and only if $vw \in E$ and $\b(v)=\a(w)$.
\end{defi}

\subsection{Improved Lower Bounds}

In this subsection we prove lower bounds which show that our upper bounds in the
rest of the paper are sharp or nearly sharp.  These will not be explicitly
needed elsewhere, so the impatient reader should feel free to skip to
Section~\ref{main-thms-list:sec}, where we prove Theorems~\ref{factor2:thm}(i)
and \ref{big-lists:thm}(i).

\begin{obs}
\label{lower-bound:obs}
If $G$ is a graph with list colourings $\a$ and $\b$, then $\dist(\a,\b)
\ge \mu(D_{\a, \b})+\sum_{v \in V} \mathbf{1}_{\a(v) \not= \b(v)}$.
\end{obs}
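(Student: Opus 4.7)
The plan is to fix a shortest reconfiguration sequence $\a = \gamma_0, \gamma_1, \ldots, \gamma_t = \b$ in $\C_L(G)$ with $t = \dist(\a,\b)$, and to lower-bound $t$ by counting recolouring steps in two ways. First, every vertex $v$ with $\a(v) \neq \b(v)$ must be recoloured at least once along the sequence, which already accounts for a total of $\sum_{v \in V}\mathbf{1}_{\a(v)\neq\b(v)}$ steps. The remaining task is to find $\mu(D_{\a,\b})$ additional steps that are disjoint from these.

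To do so, let $M$ be a maximum matching of the undirected graph $H_{D_{\a,\b}}$, so that $|M| = \mu(D_{\a,\b})$ and each $vw \in M$ corresponds to a bidirected edge of $D_{\a,\b}$. By the definition of $D_{\a,\b}$, this means precisely that $\b(v) = \a(w)$ and $\b(w) = \a(v)$, i.e. $\a$ and $\b$ swap colours across the edge $vw$. For such a pair, I would argue that at least one endpoint must be recoloured at least twice. Let $v$ be the first endpoint of $vw$ to be recoloured, say at step $i$. Just before step $i$, the vertex $w$ still carries its original colour $\a(w) = \b(v)$; since $\gamma_i$ is a proper colouring with $\gamma_i(w) = \a(w)$, we must have $\gamma_i(v) \neq \b(v)$. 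Hence $v$ has to be recoloured at least one more time later in order to eventually reach $\gamma_t(v) = \b(v)$.

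Because the edges of $M$ are vertex-disjoint, the ``extra'' recolouring contributed by each $vw \in M$ is associated to a distinct vertex, so these $|M|$ additional steps are counted over and above the one-per-changed-vertex baseline. Summing gives
\[
t \;\ge\; \sum_{v \in V}\mathbf{1}_{\a(v)\neq\b(v)} \;+\; |M| \;=\; \sum_{v \in V}\mathbf{1}_{\a(v)\neq\b(v)} \;+\; \mu(D_{\a,\b}),
\]
which is the desired inequality.

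The argument is short, and the only real obstacle is keeping the bookkeeping honest: one must use vertex-disjointness of $M$ to argue that the ``second'' recolouring at each matched edge is attributable to a distinct vertex, so there is no double-counting either among edges of $M$ or between the matching term and the baseline term. The conceptual core is the simple but useful observation that a swap of colours across an edge forces one endpoint to pass through a transient third colour before it can take its target value.
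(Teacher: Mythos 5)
Your proposal is correct and uses essentially the same argument as the paper: each vertex with $\a(v)\neq\b(v)$ must be recoloured at least once, and for each digon (bidirected edge) in a maximum matching of $D_{\a,\b}$ the first endpoint to be recoloured must move to a transient colour and hence be recoloured at least twice, with vertex-disjointness of the matching preventing double-counting. The paper's proof states this more tersely but is the same in substance.
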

\begin{proof}
Whenever $\a(v) \not= \b(v)$, vertex $v$ must be recoloured.
Further, if $v$ and $w$ are neighbours for which $\a(v)=\b(w)$ and
$\a(w)=\b(v)$, then we cannot recolour both $v$ and $w$ only once, since after
every recolouring step the resulting colouring must be proper.
So at least $\sum_{v \in V} \mathbf{1}_{\a(v) \not= \b(v)}$ vertices must be
recoloured, and at least $\mu(D_{\a, \b})$ vertices must be recoloured at least
twice. 
\end{proof}

Does the lower bound in Observation~\ref{lower-bound:obs} always hold with
equality?  Our next example shows that it does not; see
Figure~\ref{fig:C4(C4)reconfiguration}.
Specifically, this example exhibits, for the 4-cycle, colourings $\a$ and
$\b$ such that $\mu(D_{\a,\b})<\mu(G)$ but still $\dist(\a,\b)=n(G)+\mu(G)$.

\begin{Example}
Let $G=C_4$ and denote $V(G)$ by $[4]$. If $\a(i)=i$ and $\b(i)\equiv
i+1 \pmod 4$ and the colour set is precisely $[4]$, then transforming $\a$ to $\b$ uses at least 6 recolourings,
i.e. $\dist(\a, \b)= 6$.
	Part of this reconfiguration graph is shown in 
Figure~\ref{fig:C4(C4)reconfiguration}.
\end{Example}
\begin{proof}
Starting from $\a$, each vertex $i$ has a single colour with which it can be
recoloured.  But each possible recolouring creates a colouring $\a'$ for which
Observation~\ref{lower-bound:obs} gives $\dist(\a',\b)\ge 5$.
\end{proof}

\begin{figure}[!h]
\centering
\begin{tikzpicture}[scale=.50]
\tikzstyle{mythick}=[line width=0.8mm] 
\tikzstyle{mythicker}=[line width=2.5mm] 
\tikzset{every node/.style=uStyle}
\def\Srad{.25} 
\def\Brad{1.9} 
\def\len{0.8cm} 
\def\myshift{2in} 
\def\myblue{blue!60!white} 

\draw[mythicker] (0,\myshift) -- (3*\myshift,\myshift) -- (3*\myshift,0) --
(\myshift,0) (0,0) -- (\myshift,\myshift);

\newcommand\ColorNode[6]
{
\draw[mythick, fill=white] (#1,#2) circle (\Brad);
\draw[mythick] (#1,#2) ++ (\len,\len) --++ (-2*\len,0) --++ (0,-2*\len) --++ (2*\len,0) -- cycle;
\foreach \x/\y/\col in {1/1/#3, -1/1/#4, -1/-1/#5, 1/-1/#6}
\draw[fill=\col, thick] (#1,#2) ++ (\x*\len,\y*\len) circle (\Srad);
}

\ColorNode{0}{0}{red}{\myblue}{red}{\myblue}
\ColorNode{\myshift}{0}{\myblue}{yellow}{green}{red}
\ColorNode{2*\myshift}{0}{\myblue}{yellow}{green}{yellow}
\ColorNode{3*\myshift}{0}{\myblue}{yellow}{red}{yellow}
\ColorNode{0}{\myshift}{red}{\myblue}{yellow}{green}
\ColorNode{\myshift}{\myshift}{red}{\myblue}{red}{green}
\ColorNode{2*\myshift}{\myshift}{red}{yellow}{red}{green}
\ColorNode{3*\myshift}{\myshift}{\myblue}{yellow}{red}{green}
\end{tikzpicture}

\caption{Part of the reconfiguration graph $\C_4(C_4)$}
\label{fig:C4(C4)reconfiguration}
\end{figure}

Next we present the (previously promised) proof of
Proposition~\ref{lower-bounds:prop}.  For
easy reference, we restate it.

\begin{prop}
\label{lower-bounds:prop}
For every graph $G$
(i) there exists a list-assignment $L$ such that
$|L(v)|=\deg(v)+2$ for all $v$ for which $\diam \C_L(G)=n(G)+\mu(G)$ and (ii) there
exists a correspondence cover $(L,H)$ such that $|L(v)|=\deg(v)+2$ for all $v$
for which $\diam \C_{(L,H)}(G)=n(G)+\tau(G)$.
\end{prop}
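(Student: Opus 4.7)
The plan is to exhibit, for each part, an explicit list-assignment (resp.\ correspondence cover) with the prescribed sizes together with two proper colourings $\a,\b$ whose reconfiguration distance attains $n(G)+\mu(G)$ (resp.\ $n(G)+\tau(G)$). The matching upper bounds are exactly the content of Conjectures~\ref{conj:main_list} and~\ref{conj:main_DP}, so the proposition is at heart a sharpness statement for those conjectures. In both parts the construction is carried out over a large colour universe, with each $L(v)$ padded by ``fresh'' colours (appearing in no other list) to make $|L(v)|=d(v)+2$ exactly.

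For part (i), fix a maximum matching $M$ of $G$ with partner function $M(\cdot)$. Choose a proper colouring $\a$ of $G$ that also properly colours the auxiliary graph $\widehat{G}$ from the proof of Proposition~\ref{lower-bound-prop} (achievable greedily since $\Delta(\widehat{G})\le 2\Delta(G)-1$). Define $\b(v):=\a(M(v))$ whenever $v$ is $M$-saturated, and for each unsaturated $w$ greedily pick $\b(w)\ne\a(w)$ avoiding the $\b$-values already assigned to $w$'s neighbours. Then $\b$ is proper: on matching edges it just swaps $\a$-values of the endpoints, and on non-matching edges $uv$ with both endpoints $M$-saturated, $M(u)M(v)\in E(\widehat{G})$ forces $\a(M(u))\ne\a(M(v))$, i.e.\ $\b(u)\ne\b(v)$. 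Set $L(v):=\{\a(v),\b(v)\}\cup P_v$ with fresh $P_v$ sized so that $|L(v)|=d(v)+2$. Since $D_{\a,\b}$ contains every edge of $M$ as a bidirected edge and $\a(v)\ne\b(v)$ for all $v$, Observation~\ref{lower-bound:obs} gives $\dist(\a,\b)\ge n(G)+\mu(G)$.

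For part (ii), pick proper colourings $\a,\b$ of $G$ (again from a large universe) with $\a(v)\ne\b(v)$ for every $v$; e.g.\ take $\b$ to be a ``shifted'' copy of $\a$. Build the correspondence cover $(L,H)$ by placing, in each edge-matching $H_{uv}$, precisely the two pairs $\{\a(u),\b(v)\}$ and $\{\b(u),\a(v)\}$. Since $\a(u)\ne\b(u)$ and $\a(v)\ne\b(v)$, these two pairs share no colour and form a valid partial matching, and they avoid $\{\a(u),\a(v)\}$ and $\{\b(u),\b(v)\}$, so $\a$ and $\b$ remain valid $(L,H)$-colourings. Pad each $L(v)$ to size $d(v)+2$ with fresh colours. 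In any reconfiguration from $\a$ to $\b$, on every edge $uv$ the direct move $u\mapsto\b(u)$ while $v=\a(v)$ is forbidden by $\{\b(u),\a(v)\}\in H_{uv}$, and symmetrically $v\mapsto\b(v)$ while $u=\a(u)$ is forbidden. Hence at least one of $\{u,v\}$ must be recoloured at least twice. Consequently the set $T$ of doubly-recoloured vertices forms a vertex cover of $G$, so $|T|\ge\tau(G)$ and $\dist(\a,\b)\ge n(G)+\tau(G)$.

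The main subtlety I anticipate is keeping $|L(v)|=d(v)+2$ with \emph{exact} equality while preserving propriety of both $\a$ and $\b$ and, in part (i), the side condition $\a(M(u))\ne\a(M(v))$ across non-matching edges; using a sufficiently large colour universe, the auxiliary-graph trick from Proposition~\ref{lower-bound-prop}, and fresh padding colours disjoint from every other list resolves these constraints uniformly.
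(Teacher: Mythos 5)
The lower-bound halves of both parts are fine: for (i) your argument via bidirected matching edges in $D_{\a,\b}$ and Observation~\ref{lower-bound:obs} matches the paper, and for (ii) your vertex-cover argument (a vertex can be recoloured just once only if its neighbours have already left $\a$) is the paper's argument in slightly different language. The genuine gap is the \emph{upper} bound. You write that ``the matching upper bounds are exactly the content of Conjectures~\ref{conj:main_list} and~\ref{conj:main_DP},'' but those are open conjectures, and the proposition must be established unconditionally. The whole point of the construction is to pick an $L$ (resp.\ $(L,H)$) for which the upper bound $\diam \C_L(G)\le n(G)+\mu(G)$ (resp.\ $\le n(G)+\tau(G)$) can be verified directly, independently of the conjectures. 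Invoking the conjectures here is circular.

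The paper achieves this by making the upper bound trivial. In part (i), it picks lists satisfying $|L(v)\cap L(w)|=2$ when $vw\in M$ and $L(v)\cap L(w)=\emptyset$ for every $vw\in E(G)\setminus M$; with non-matching edges carrying no list constraint, the matching pairs reconfigure independently in at most three steps each and unsaturated vertices in one, giving $\diam\C_L(G)\le 3\mu(G)+(n(G)-2\mu(G))=n(G)+\mu(G)$. Your lists $L(v)=\{\a(v),\b(v)\}\cup P_v$ need not have this property: for a non-matching edge $uv$ with both endpoints saturated, $\a$ being proper on $\widehat{G}$ forces $\a(M(u))\ne\a(M(v))$, i.e.\ $\b(u)\ne\b(v)$, but nothing prevents $\a(u)=\b(v)$ or $\b(u)=\a(v)$, so $L(u)\cap L(v)$ can be nonempty and the trivial upper-bound argument breaks. (If you instead took $\a$ to be a \emph{rainbow} colouring—distinct colours on all vertices—your lists would have the disjointness property for free; but even then you still need to say why that yields the upper bound, which you don't.) For part (ii) your cover is essentially the paper's (only $\{\a(u)\b(v),\b(u)\a(v)\}$ is matched, everything else free), and the paper's upper-bound argument—move a minimum vertex cover $S$ to unmatched fresh colours, recolour $V\setminus S$, then recolour $S$—would carry over directly; but again, you have to actually give it rather than appeal to the Correspondence Conjecture. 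Also note the $\widehat{G}$ machinery is needed in Proposition~\ref{lower-bound-prop} (ordinary $k$-colourings, shared palette) but is unnecessary here, where you get to design the lists.
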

\begin{proof}
For a graph $G$, fix a maximum matching $M$.  Assign to each vertex $v$ a list of
$\deg(v)+2$ colours such that $|L(v)\cap L(w)|=2$ if $vw\in M$ and otherwise
$|L(v)\cap L(w)|=0$.  Pick $\a$ and $\b$ such that for each $vw\in M$, we have
$\a(v)=\b(w)$ and $\a(w)=\b(v)$, and for each $v\in V(G)$ we have $\a(v) \not=\b(v).$
The lower bound holds by Observation~\ref{lower-bound:obs}.
The upper bound is trivial, since $L(x)\cap L(y)=\emptyset$ for all $xy\in
E(G)\setminus M$.  This proves (i).


Let $(L,H)$ be a correspondence cover of $G$ such that
$L(v)=\{(v,1)\ldots(v,\deg(v)+2)\}$ for every $v \in V(G)$. 
For every $vw\in E(G)$, let the matching (from $H$) consist of the two edges $(v,1)(w,2)$ and $(v,2)(w,1)$; otherwise, $(v,i)$ and $(w,j)$ are unmatched.
Let $\a(v):=(v,1)$ and $\b(v):=(v,2)$ for all $v\in V(G)$.
Starting from $\a$, we can recolour a vertex $v$ with $\b(v)$ only if all
neighbours of $v$ have already been recoloured.  Thus, the set of vertices recoloured
only once must be an independent set; equivalently, the set of vertices
recoloured at least twice must be a vertex cover. 
So we must use at least $n(G)+\tau(G)$ recolourings, which proves the lower
bound.  For the upper bound, fix a minimum vertex cover $S$.  First recolour
each $v\in S$ with $(v,3)$; afterward, recolour each vertex with
$(v,2)$, first the vertices in $V(G)\backslash S$ and then those in $S$.
So $n(G)+\tau(G)$ recolourings suffice.
This proves (ii).
\end{proof}

It is helpful to note that the proof of Proposition~\ref{lower-bounds:prop}
actually works (for both list colouring and correspondence colouring) whenever
all $v$ satisfy $|L(v)|\ge 3$. 

Clearly, the graph $\widehat{G}$ constructed in
Proposition~\ref{lower-bound-prop} depends on our choice of a perfect matching
$M$.  It is interesting to note that some choices of $M$ work quite well, while
others work rather poorly. 

\begin{Example}
\label{choose-M-example}
Figure~\ref{choose-M-fig} shows
a graph $G$ built from two cliques $K_p$ and a complete bipartite
graph $K_{p,p}$, with parts $U$ and $W$, by adding a perfect matching $M$
between one copy of $K_p$ and the vertices of $U$ and a perfect matching
between $W$ and the other copy of $K_p$.
Now $\chi(\widehat{G})=2p$ if we choose our perfect matching to be $M$. 
If $p$ is even, then we can instead choose a perfect matching $M'$ that is the
disjoint union of perfect matchings in the two copies of $K_p$ and in the
$K_{p,p}$ so that the resulting $\widehat{G}$ instead satisfies
$\chi(\widehat{G})=p+2$.
\end{Example}

\begin{figure}[!h]
\centering
\begin{tikzpicture}[thick, scale=.65]
\tikzstyle{uStyle}=[shape = circle, minimum size = 4.5pt, inner sep = 0pt,
outer sep = 0pt, draw, fill=white, semithick]
\tikzstyle{sStyle}=[shape = rectangle, minimum size = 4.5pt, inner sep = 0pt,
outer sep = 0pt, draw, fill=white, semithick]
\tikzstyle{lStyle}=[shape = circle, minimum size = 4.5pt, inner sep = 0pt,
outer sep = 0pt, draw=none, fill=none]
\tikzset{every node/.style=uStyle}
\def\whitemargin{.35}


\begin{scope} 
\draw
[fill=black!10!white] (1,6.75) --++ (-1.0,0) --++ (0,-6.5) --++ (2,0) --++
(0,6.5) --++ (-1.0,0);

\draw [fill=white, rounded corners=.6*\whitemargin cm] (0,6.75) --++
(-\whitemargin,0) --++ (0,-6.5) --++ (2*\whitemargin,0) --++ (0,6.5) --++ (-\whitemargin,0);

\draw [fill=white, rounded corners=.6*\whitemargin cm] (2,6.75) --++
(-\whitemargin,0) --++ (0,-6.5) --++ (2*\whitemargin,0) --++ (0,6.5) --++ (-\whitemargin,0);

\foreach \y in {1,...,6} 
\draw (0,\y) node (u\y) {};

\foreach \y in {1,...,6} 
\draw (2,\y) node (w\y) {};
\end{scope}

\begin{scope}[xscale=.3, xshift=-7cm] 
\draw[fill=black!10!white] (0,3.5) circle (3.2cm);
\foreach \ang/\name in {-63/1, -32/2, -10/3, 10/4, 32/5, 63/6}
\draw (-0.5,3.5) ++ (\ang:2.8cm) node (x\name) {};
\end{scope}

\begin{scope}[xscale=.3, xshift=13.5cm] 
\draw[fill=black!10!white] (0.5,3.5) circle (3.2cm);
\foreach \ang/\name in {-63/6, -32/5, -10/4, 10/3, 32/2, 63/1}
\draw (1,3.5) ++ (180+\ang:2.8cm) node (y\name) {};
\end{scope}

\begin{scope}[line width=1mm] 
\draw (x1) -- (x2) (x3) -- (x4) (x5)--(x6);
\draw (y1) -- (y2) (y3) -- (y4) (y5)--(y6);
\draw (u1) -- (w1) (u2) -- (w2) (u3) -- (w3) (u4) -- (w4) (u5) --
(w5) (u6) -- (w6);
\end{scope}

\foreach \i in {1,...,6} 
\draw (u\i) -- (x\i) (w\i) -- (y\i);


\begin{scope}[xshift=-11cm]

\begin{scope} 
\draw
[fill=black!10!white] (1,6.75) --++ (-1.0,0) --++ (0,-6.5) --++ (2,0) --++
(0,6.5) --++ (-1.0,0);

\draw [fill=white, rounded corners=.6*\whitemargin cm] (0,6.75) --++
(-\whitemargin,0) --++ (0,-6.5) --++ (2*\whitemargin,0) --++ (0,6.5) --++ (-\whitemargin,0);

\draw [fill=white, rounded corners=.6*\whitemargin cm] (2,6.75) --++
(-\whitemargin,0) --++ (0,-6.5) --++ (2*\whitemargin,0) --++ (0,6.5) --++ (-\whitemargin,0);

\foreach \y in {1,...,6} 
\draw (0,\y) node (u\y) {};

\foreach \y in {1,...,6} 
\draw (2,\y) node (w\y) {};
\end{scope}

\begin{scope}[xscale=.3, xshift=-7cm] 
\draw[fill=black!10!white] (0,3.5) circle (3.2cm);
\foreach \ang/\name in {-63/1, -32/2, -10/3, 10/4, 32/5, 63/6}
\draw (-0.5,3.5) ++ (\ang:2.8cm) node (x\name) {};
\end{scope}

\begin{scope}[xscale=.3, xshift=13.5cm] 
\draw[fill=black!10!white] (0.5,3.5) circle (3.2cm);
\foreach \ang/\name in {-63/6, -32/5, -10/4, 10/3, 32/2, 63/1}
\draw (1,3.5) ++ (180+\ang:2.8cm) node (y\name) {};
\end{scope}

\foreach \i in {1,...,6} 
\draw[line width=1mm] (u\i) -- (x\i) (w\i) -- (y\i);

\end{scope}

\end{tikzpicture}
\caption{A graph $G$ (shown on both left and right) consisting of a complete
bipartite graph $K_{p,p}$ in the
center, two copies of $K_p$ on the sides, and a matching from each copy of $K_p$
to one part of $K_{p,p}$. The left and right figures
specify different perfect matchings $M$ (in bold). 
On the left, $\chi(\widehat{G}) \ge \omega(\widehat{G}) = |V(K_{p,p})| = 2p =
2\Delta(G)-2$.  On the right, $\omega(\widehat{G})=\omega(G) = p$, and in fact
$\chi(\widehat{G})\le p+2=\Delta(G)+1$. (We use colours $1,\ldots,p$ on each
clique and colours $p+1$ and $p+2$ in the center.)%
\label{choose-M-fig}
}
\end{figure}

Although Example~\ref{choose-M-example} implies that our choice of $M$ can
effect $\chi(\widehat{G})$ dramatically, for many graphs, any choice of $M$ is
fine.  To conclude this section, we present various cases in which (for all
choices of $M$) we can weaken the hypothesis of Proposition~\ref{lower-bound-prop}.

\begin{prop}
For a graph $G$ and positive integer $k$ we have $$\diam~\C_k(G)\ge n(G)+\mu(G)$$
whenever {(a) $\chi(\widehat{G})\le k-1$ or (b) $\chi(\widehat{G})\le k$ and
$k\ge \Delta(G)+2$}, where $\widehat{G}$ is as in
Proposition~\ref{lower-bound-prop}.  In particular, this is true 
in the following cases.
\begin{enumerate}
\item\label{itm:1} 
{$1+\chi(G)(\chi(G)-1)\le k$ (in particular, if $G$ is bipartite and $k\ge
3$).}
\item\label{itm:2} $\Delta(G)=3$ and $k\ge \Delta(G)+2=5$.
\item 
$\chi(G)\le 3$ and $k\ge \Delta(G)+2$ (in particular,
if $G$ is outerplanar and $k\ge \Delta(G)+2$). 
\item $k\ge \Delta(G)+3$ and $G$ is triangle-free, if Reed's
Conjecture\footnote{Recall that Reed conjectured that $\chi(G)\le
\ceil{(\Delta(G)+1+\omega(G))/2}$ for every graph $G$.} holds for every graph
$\widehat{G}$ with $\omega(\widehat{G})\le 5$. 
\item $k\ge \Delta(G)+2$ and $\Delta(G)>f(\omega(G))$ for some function $f$.
\end{enumerate}
\end{prop}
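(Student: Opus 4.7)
The plan is to first prove the dichotomy that (a) and (b) each suffice, and then verify each of the five itemised cases by controlling $\chi(\widehat{G})$, plus $\omega(\widehat{G})$ for items 4 and 5. The key extra ingredient for the dichotomy, not present in the proof of Proposition~\ref{lower-bound-prop}, is that when $M$ is a \emph{maximum} matching, the set $U$ of $M$-unsaturated vertices is independent in $G$ (else an edge inside $U$ would augment $M$). Under (b), I take a proper $k$-colouring $\alpha$ of $\widehat{G}$ and build $\beta$ by swapping colours across every $M$-edge; this is proper on the saturated vertices precisely by the definition of $\widehat{G}$. For each $v\in U$ I then pick $\beta(v)\notin\{\alpha(v)\}\cup\{\beta(w):w\in N_G(v)\}$, and since every neighbour of $v$ is saturated (so already has a $\beta$-value), this amounts to $\deg(v)+1\le k-1$ forbidden colours, allowed by $k\ge\Delta(G)+2$. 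Under (a), I instead choose $\alpha$ using only colours $\{1,\dots,k-1\}$ and assign $\beta(v)=k$ to every $v\in U$; properness follows because $U$ is independent and the swapped $\beta$-values all lie in $\{1,\dots,k-1\}$. In both cases Observation~\ref{lower-bound:obs} applied to $(\alpha,\beta)$ gives $\dist(\alpha,\beta)\ge n(G)+\mu(G)$, since $M$ embeds in $D_{\alpha,\beta}$ as a bidirected matching.

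Items 1--3 then reduce to combinatorial upper bounds on $\chi(\widehat{G})$. For item 1 I would fix a proper $\chi(G)$-colouring $\phi$ of $G$ and label each matched $v$ with the ordered pair $(\phi(u_v),\phi(v))$, where $u_v$ is the mate of $v$, and each unmatched $v$ with any $(c,\phi(v))$ with $c\ne\phi(v)$; on a $G$-edge $vy$ the second coordinates differ, and on an added edge $vy$ arising from $vw,xy\in M$ with $wx\in E(G)$ the first coordinates $\phi(w),\phi(x)$ differ. So $\chi(\widehat{G})\le\chi(G)(\chi(G)-1)\le k-1$ and case (a) applies. For item 2, Brooks' theorem on $\widehat{G}$ with $\Delta(\widehat{G})\le 2\Delta(G)-1=5$ gives $\chi(\widehat{G})\le 5=k$ unless $\widehat{G}$ has a $K_6$ component, which a short degree-counting argument on the would-be $K_6$ rules out (chasing which pairs inside are $M$-edges versus added edges eventually forces some vertex of $G$ to have degree at least $4$), after which case (b) applies. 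Item 3 follows by combining the previous: when $\Delta(G)\ge 4$, item 1 already gives $\chi(\widehat{G})\le 6\le k$, and the cases $\Delta(G)\in\{2,3\}$ are handled by item 2 or by Brooks' directly (the exclusion of a $K_4$ component in $\widehat{G}$ when $\Delta(G)=2$ being analogous to the $K_6$ exclusion).

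Items 4 and 5 additionally require control of $\omega(\widehat{G})$. For item 4 I would first prove the structural lemma that $\omega(\widehat{G})\le 5$ whenever $G$ is triangle-free: given a putative $K_6$ on $v_1,\dots,v_6$, each pair is either an $M$-edge or arises via the added-edge construction, and enumerating the possibilities always forces a triangle in $G$ among $v_1,\dots,v_6$ (for instance, with $v_1v_2,v_3v_4\in M$, realising both $v_1v_3$ and $v_1v_4$ in $\widehat{G}$ forces one of the triangles $v_1v_2v_3$, $v_1v_2v_4$, $v_2v_3v_4$, $v_1v_3v_4$ to appear in $G$). Given this, Reed's conjecture applied to $\widehat{G}$ yields $\chi(\widehat{G})\le\lceil(2\Delta(G)-1+1+5)/2\rceil=\Delta(G)+3=k$, and case (b) closes the argument. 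Item 5 uses the same template in the asymptotic regime: a bound of the form $\omega(\widehat{G})\le F(\omega(G))$ combined with a Johansson--Molloy-type chromatic bound $\chi(\widehat{G})\le C(\omega(G))\,\Delta(\widehat{G})/\log\Delta(\widehat{G})$ falls below $\Delta(G)+2$ once $\Delta(G)$ exceeds some $f(\omega(G))$, again letting case (b) close the argument.

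The main obstacle I expect is the structural clique bound on $\widehat{G}$ used in items 4 and 5: the added-edge operation behaves like a ``matching product'' of $G$ with itself, and bounding $\omega(\widehat{G})$ in terms of $\omega(G)$ requires a careful case analysis on which pairs inside a putative clique are $M$-edges and which come from added edges. Once these clique bounds are in hand, the remaining work is a standard application of Reed-type chromatic inequalities, and the $K_6$-elimination for item 2 (and $K_4$-elimination in item 3) reduces to a bounded finite check.
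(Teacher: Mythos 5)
Your proposal follows the paper's overall strategy closely: prove the dichotomy (a)/(b) by swapping $\alpha$ across $M$-edges (the well-definedness of the swap on matched vertices being precisely what $\widehat{G}$'s added edges guarantee), then verify each numbered item by bounding $\chi(\widehat{G})$. Your explicit use of the independence of the unsaturated set $U$ to set $\beta\equiv k$ on $U$ in case (a) is a clean touch; the paper instead chooses $\beta(v)$ to avoid $\alpha(v)$ and all neighbours' $\beta$-values, relying on $k\ge\chi(\widehat{G})+1$ or $k\ge\Delta(G)+2$. Your item 1 labelling by ordered pairs and your items 3 and 5 essentially reproduce the paper's arguments (the paper proves item 5 by observing that $\widehat{G}$ decomposes into two edge-disjoint copies of $G$, hence $\omega(\widehat{G})<R(\omega(G)+1,\omega(G)+1)$, then applies Molloy's bound). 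For item 4 the paper's argument is slicker than your enumeration: two-colour the edges of a putative $K_6$ in $\widehat{G}$ by membership in $E(G)$ versus added; triangle-freeness of $G$ kills a triangle in the first colour, $R(3,3)=6$ forces one in the second, and the three added edges then pull back to a $G$-triangle on the $M$-mates, a contradiction.

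However, your parenthetical heuristic in item 2 is not what actually happens. If $\widehat{G}[S]\cong K_6$ with $\Delta(G)=3$, one does \emph{not} get a vertex of $G$-degree at least $4$. Since $\deg_{\widehat G}(v)\le 2\deg_G(v)-1$, every $v\in S$ has $\deg_G(v)=3$ with equality throughout, so $G[S]$ is exactly $3$-regular and $M$-saturates $S$. The paper's actual contradiction is structural: if two $M$-edges $v_1v_2$ and $v_3v_4$ lie on a common $4$-cycle in $G$, then the added edge from $v_1$ through $v_2,v_3$ lands on $v_4$, which is already a $G$-neighbour, so $\deg_{\widehat G}(v_1)\le 4<5$, contradicting $K_6$; and a short finite check (over the two $3$-regular graphs on $6$ vertices, and the admissible perfect matchings) shows such a $4$-cycle always exists. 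So the bounded finite check is right in spirit, but its target is ``two $M$-edges on a $4$-cycle,'' not an over-degree vertex. For the same reason, in item 3 with $\Delta(G)=2$ there is nothing to exclude: $\Delta(\widehat G)\le 3$ already gives $\chi(\widehat G)\le 4\le k$ greedily, and indeed the paper just invokes Proposition~1 ($k\ge 2\Delta(G)$) there; no $K_4$-elimination is needed.
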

\begin{proof}
Recall the definition of $\widehat{G}$
from Proposition~\ref{lower-bound-prop}.
Let $M$ be a maximum matching in $G$.  Form $\widehat{G}$ from $G$ as follows.  If
$vw,xy\in M$ and $wx\in E(G)$, then add $vy$ to $\widehat{G}$.  
To prove each case of the proposition, we construct the graph $\widehat{G}$,
{let $\a$ be a $\chi(\widehat{G})$-colouring of $\widehat{G}$ (which
is also a $\chi(\widehat{G})$-colouring of $G$)}, and form a $k$-colouring $\b$ from $\a$ by swapping the colours
on the endpoints of each edge $e$ in the specified maximum matching $M$; for each $v$ not saturated by $M$, we choose $\b(v)$ to avoid
$\a(v)\cup\bigcup_{x\in N(v)}\b(x)$.
{The latter is possible because we assumed that $k$ is bounded from below by $ \chi(\widehat{G})+1$ or by $\Delta(G)+2$.}
By Observation~\ref{lower-bound:obs}, we have $\dist(\a,\b) \ge \mu(D_{\a, \b})+\sum_{v \in V}
\mathbf{1}_{\a(v) \not= \b(v)}= \mu(G)+n(G)$.
Note that $\Delta(\widehat{G})\le
2\Delta(G)-1$.  So $\chi(\widehat{G})\le 1+\Delta(\widehat{G})\le 2\Delta(G)$.  Now we show that in each case
listed above we have $\chi(\widehat{G})\le k$ 
{ or (in the first case) $\chi(\widehat{G})\le k-1$.}
\begin{enumerate}
    \item Fix a $\chi(G)$-colouring $\a$ of $G$.  We assume that $M$ is a perfect
matching; if not, then we add a pendent edge at each vertex that is unsaturated
by $M$ and add all these new edges to $M$.    To colour $\widehat{G}$, we give
each vertex $v$ the colour $(i,j)$, where $\a(v)=i$, $\a(w)=j$, and $vw\in M$. 
It is easy to check that the resulting colouring of $\widehat{G}$ is proper.
    \item Suppose that $\Delta(G)=3$.  As noted above, we have $\Delta(\widehat{G})\le
2\Delta(G)-1=5$.  If $\Delta(\widehat{G})\le 4$, then $\chi(\widehat{G})\le 5$ as desired.
So assume instead that $\Delta(\widehat{G})=5$.  By Brooks' Theorem it suffices to show
that no component of $\widehat{G}$ is $K_6$.  Suppose, to the contrary, that there exists
$S\subseteq V(G)$ such that $\widehat{G}[S]=K_6$.  It is easy to check that $G[S]$ must
be 3-regular, and every vertex of $S$ must be saturated by $M$.  Observe that if
two edges of $M$, say $v_1v_2,v_3v_4$ lie on a 4-cycle in $G$, then
$d_{\widehat{G}}(v_i)\le 4$, for each $i\in\{1,2,3,4\}$.  Now we need only to check that
such a configuration occurs for every 3-regular graph $G$ on 6 vertices
and every perfect matching $M$.  This is straightforward to verify: $G-M$ is
either a 6-cycle or two 3-cycles; in the first case, $M$ can be added in two
(non-isomorphic) ways and in the second, $M$ can be added in only one way.
\item If $\Delta(G)\ge 4$, then we are
done by condition~\ref{itm:1}.  If $\Delta(G)=3$, then we are done by condition~\ref{itm:2}.  If $\Delta(G)=2$, then
we are done by Proposition~\ref{lower-bound-prop}. And if $\Delta(G)\le 1$, then we
are done trivially.

\item We show that if $G$ is triangle-free, then $\omega(\widehat{G})\le 5$.
Suppose, to the contrary, that $\omega(\widehat{G})\ge 6$.  Consider
$S\subseteq V(G)$ such that $\widehat{G}[S]=K_6$, and colour each edge of
$\widehat{G}[S]$ with 1 if it appears in $G$ and with 2 otherwise.  Since $G$
is triangle-free, $\widehat{G}[S]$ has no triangle coloured 1.  
Let $R(t,t)$ denote the diagonal Ramsey number for cliques of order $t$. 
Since $R(3,3)=6$, we must have a triangle in
$\widehat{G}[S]$ that is coloured 2; denote its vertices by $v,w,x$.  
Let $vv',ww',xx'$ denote the
edges of $M$ incident with $v,w,x$.  Since $vw,vx,wx\in E(\widehat{G})\setminus E(G)$,
we conclude that $v'w', v'x',w'x'\in E(G)$.  This contradicts that $G$ is
triangle-free.  Hence, $\omega(\widehat{G})\le 5$, as claimed.  Finally, by Reed's
Conjecture, $\chi(\widehat{G})\le \ceil{(\Delta(\widehat{G}) +
\omega(\widehat{G})+1)/2}\le \ceil{(2\Delta(G)-1+5+1)/2}\le \Delta(G)+3\le k$.

\item Let $t$ be a positive integer and consider any graph $G$ with
$\omega(G)=t$. Note that $\widehat{G}$ can be decomposed into two
(edge-disjoint) copies of $G$; here $\widehat{G}$ is formed from the two copies
of $G$ by identifying, for each edge $vw\in M$, the instance of $v$ in one copy
of $G$ with the instance of $w$ in the other copy.
This implies that $\omega(\widehat{G}) <R(t+1,t+1)$. 
We also know that $\Delta(\widehat{G})\le 2 \Delta(G).$
Now by e.g.~\cite[Theorem~2]{Molloy} we have
$$\chi(\widehat{G}) \le 200 R(t+1,t+1) \frac{ 2 \Delta(G) \ln\ln(2
\Delta(G))}{\ln(2
\Delta(G))}.$$
This is smaller than $\Delta(G)+2$ whenever $\Delta(G)$ is sufficiently large.
\end{enumerate}
\aftermath
\end{proof}

We also consider the graph $\tilde{G}$ formed from $G$ by contracting each edge
of a maximum matching $M$. 

\begin{prop}
For a graph $G$ and positive integer $k$ we have $$\diam~\C_k(G)\ge n(G)+\mu(G)$$
whenever $k \ge 2\chi(\tilde{G})$ for
some maximum matching of $G$.
In particular, this is
\begin{enumerate}
\item true a.a.s. for $G_{n,p}$, for fixed $p\in (0,1)$, and $k\ge \Delta(G_{n,p})+2.$
\item 
true if $G$ is $K_{t+1}$-minor free and $k\geq 2t$, provided Hadwiger's
Conjecture\footnote{Recall that Hadwiger conjectured that $\chi(G)\le t$ for
every graph $G$ with no $K_{t+1}$-minor.} holds for $K_{t+1}$-minor free
graphs. The latter is true for $t\le 5$.
\end{enumerate}
\end{prop}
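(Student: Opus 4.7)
The plan is to adapt the construction of Proposition~\ref{lower-bound-prop}, replacing the auxiliary graph $\widehat{G}$ with the contracted graph $\tilde{G}$. First I would fix a maximum matching $M$ of $G$ (one realising the hypothesis $k\ge 2\chi(\tilde{G})$) together with a proper colouring $\tilde{\alpha}\colon V(\tilde{G})\to[\chi(\tilde{G})]$. I would then use the palette of $2\chi(\tilde{G})\le k$ colours, encoded as pairs $(c,i)$ with $c\in[\chi(\tilde{G})]$ and $i\in\{1,2\}$. For each edge $vw\in M$, writing $c:=\tilde{\alpha}(\{v,w\})$, set $\alpha(v)=(c,1),\ \alpha(w)=(c,2),\ \beta(v)=(c,2),\ \beta(w)=(c,1)$. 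For each vertex $v$ unsaturated by $M$, with $c:=\tilde{\alpha}(v)$, set $\alpha(v)=(c,1)$ and $\beta(v)=(c,2)$.

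The only subtlety is verifying that both colourings are proper. For an edge $uv\in M$, the first coordinates agree but the second coordinates differ, so $\alpha(u)\ne\alpha(v)$ (and likewise for $\beta$). For an edge $uv\in E(G)\setminus M$, the maximality of $M$ forbids both endpoints from being unsaturated, so each of $u,v$ belongs to a (possibly singleton) vertex of $\tilde{G}$; these two vertices of $\tilde{G}$ are distinct and adjacent, hence receive distinct $\tilde{\alpha}$-colours, forcing the first coordinates of $\alpha(u)$ and $\alpha(v)$ to differ. Every vertex is recoloured (first coordinate preserved, second coordinate toggled), and for each $vw\in M$ we have $\alpha(v)=\beta(w)$ and $\alpha(w)=\beta(v)$, so $vw$ becomes a bidirected edge of $D_{\alpha,\beta}$. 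Therefore $\mu(D_{\alpha,\beta})\ge|M|=\mu(G)$, and Observation~\ref{lower-bound:obs} yields $\dist(\alpha,\beta)\ge n(G)+\mu(G)$, establishing the main inequality.

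For case (2), the argument is immediate: $\tilde{G}$ is a contraction of $G$ and hence a minor of $G$, so $\tilde{G}$ is $K_{t+1}$-minor free. Hadwiger's Conjecture then gives $\chi(\tilde{G})\le t$, so $2\chi(\tilde{G})\le 2t\le k$ and the main inequality applies; the unconditional part for $t\le 5$ uses the Robertson--Seymour--Thomas theorem. For case (1), I would combine the standard a.a.s.\ estimate $\Delta(G_{n,p})=(1+o(1))np$ with the bound $\chi(\tilde{G})=O(n/\log n)$, obtainable by observing that for an a.a.s.\ existing perfect matching $M$ the contracted graph $\tilde{G}$ stochastically resembles $G_{n/2,\,p'}$ with $p'=1-(1-p)^4\in(0,1)$; in particular $\chi(\tilde{G})$ is logarithmically smaller than $np/2$, so a.a.s.\ $2\chi(\tilde{G})\le \Delta(G_{n,p})+2=k$.

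The main obstacle is case (1): the probabilistic argument that $\chi(\tilde{G}_{n,p})=o(np)$ a.a.s.\ needs either an explicit coupling of $\tilde{G}$ with a random graph of density $p'$ or a direct concentration estimate for the chromatic number after contracting a random matching. The rest of the proof — the construction of $\alpha,\beta$ and the appeal to Observation~\ref{lower-bound:obs} — is a clean generalisation of Proposition~\ref{lower-bound-prop}.
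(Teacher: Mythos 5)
Your construction of $\a$ and $\b$ is essentially the paper's, just written with pair notation $(c,i)$ instead of the equivalent colour labels $2c-1, 2c$, and your verification of properness is correct (and slightly more explicit than what the paper writes). Case (2) is also exactly the paper's argument: $\tilde{G}$ is a minor of $G$, hence $K_{t+1}$-minor free, and Hadwiger's Conjecture gives $\chi(\tilde{G})\le t$.

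The genuine gap is case (1), and you have correctly identified it yourself. Your plan to argue that $\tilde{G}$ ``stochastically resembles'' $G_{n/2,p'}$ does not go through as stated: a maximum matching $M$ of $G_{n,p}$ is a random object that is heavily correlated with the edge set of $G$, so the contracted graph is emphatically not distributed as $G_{n/2,p'}$ for a fixed $p'$, and producing a coupling or a direct concentration bound for $\chi(\tilde{G})$ after contracting a graph-dependent matching is a nontrivial task that you leave open. The paper sidesteps this entirely by a deterministic minor-monotonicity argument: since $\tilde{G}$ is a minor of $G_{n,p}$, it is $K_{h+1}$-minor free where $h$ is the Hadwiger number of $G_{n,p}$, which a.a.s.\ equals $\Theta(n/\sqrt{\ln n})$ by Bollob\'as--Catlin--Erd\H{o}s. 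The Delcourt--Postle bound $\chi(\tilde{G})=O(h\ln\ln h)=O(n\ln\ln n/\sqrt{\ln n})$ then holds deterministically for \emph{every} choice of maximum matching, and this is $o(n)$, hence a.a.s.\ below $\Delta(G_{n,p})+2=\Theta(n)$. This is weaker than the $O(n/\log n)$ bound you were aiming for, but it suffices and avoids the coupling problem altogether. So to complete your proof, replace the random-graph-resemblance heuristic in case (1) with the Hadwiger-number argument (or some other rigorous bound showing $\chi(\tilde{G})=o(n)$ a.a.s.).
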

\begin{proof}
Let $\tilde{\a}$ be a $\chi(\tilde{G})$-colouring of $\tilde{G}$.
{We construct a $2\chi(\tilde{G})$-colouring $\a$ of $G$ such that if $v\in V(\tilde{G})$ and
$v$ arises from contracting edge $wx\in M$ and $\tilde{\a}(v)=i$, then
$\{\a(w),\a(x)\}=\{2i-1,2i\}$; if $v$ does not arise from contracting an
edge, then we choose $\a(v)=2\tilde{\a}(v)=2i$.
Form $\b$ from $\a$ by swapping the colours on the endpoints of each edge of
$M$ and letting $\b(v):=\a(v)-1=2i-1$ for each $v \in V(G) \backslash M$.}

Hence $\diam \C_k(G) \ge \dist(\a, \b) \ge n(G)+\mu(G)$ by~Observation~\ref{lower-bound:obs}.
We now prove that in the two mentioned cases, we (a.a.s.) have $k\geq
2\chi(\tilde{G})$, regardless of the maximum matching of $G$ we choose to form $\tilde{G}$.

\begin{enumerate}
    \item Let $G:=G_{n,p}$.  
By~\cite{BCE80}, the Hadwiger number $h$ (or contraction clique number) of $G$
for fixed $p\in (0,1)$ is $\Theta\left( \frac{n}{\sqrt{\ln n}}\right)$ 
a.a.s., so $G$ is $K_{h+1}$-minor free. 
    Note that $\tilde{G}$ is also $K_{h+1}$-minor free.
    By recent progress towards Hadwiger's conjecture, culminating in~\cite{DelcourtPostle21}, this implies that (still
a.a.s.) $\chi(\tilde{G}) = O(h \ln \ln h)=O\left( \frac{n \ln \ln n}{\sqrt{\ln n}}\right)$.
    So $\chi(\tilde{G})$ is a.a.s. smaller than $\Delta(G_{n,p})+2$, which is
$np(1+o(1))=\Theta(n).$
    
    \item Note that $\tilde{G}$ is also $K_{t+1}$-minor free.  So, by
assumption, $\chi(\tilde{G})\le t$. 
\end{enumerate}
\aftermath
\end{proof}

\begin{question}\label{q:n+mu_lowerbound}
Is it true, for every graph $G$ and every $k\ge \Delta(G)+2$, that
$\diam~\C_k(G)\ge n(G)+\mu(G)$?
\end{question}

If the answer to this question is yes, then the 
\hyperref[conj:main_list]{List Conjecture} 
(if true) would imply that $\diam~\C_k(G)=n(G)+\mu(G)$ for every graph $G$ and
every $k\ge \Delta(G)+2$. 
{In particular, this would constitute
a precise version for Cereceda's Conjecture in the case of regular graphs.}

\section{Proving the Main Results for Lists}
\label{main-thms-list:sec}

In this section, we prove the list colouring portions of our main results:
Theorems~\ref{factor2:thm}(i) and~\ref{big-lists:thm}(i).
Recall that a graph $G$ is \emph{factor-critical} if $|V(G)|$ is odd and $G-v$
has a perfect matching for each $v\in V(G)$.
Gallai showed that if every vertex is unsaturated by some maximum matching,
i.e., $\mu(G-v)=\mu(G)$ for every vertex $v$, then $G$ is factor-critical.
So, in each proof we begin with a lemma to handle factor-critical graphs, and
thereafter proceed to the general case.  The following lemma serves as the base
case in an inductive proof of Theorem~\ref{factor2:thm}(i). (We will only need
it when $G$ is factor-critical, but we prove it for all $G$.)

\begin{lem}
\label{lem:lists:d+2}
Let $G$ be graph and let $L$ be a list-assignment for $G$ such that
$|L(v)|\ge \deg(v)+2$ for all $v\in V(G)$.  If $\a$ and $\b$ are
$L$-colourings of $G$, then we can recolour $G$ from $\a$ to $\b$ in at
most $2n(G)-1$ steps.
\end{lem}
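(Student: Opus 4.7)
The plan is to proceed by induction on $n=n(G)$. The base case $n=1$ is immediate, since a single vertex can be recoloured to its target in at most $1=2n-1$ step. For the inductive step I assume the statement holds for all graphs with fewer than $n$ vertices and split into three cases based on how $\alpha$ and $\beta$ interact.

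First I dispose of two easy reductions. If some vertex $v$ satisfies $\alpha(v)=\beta(v)$, I apply the inductive hypothesis to $G-v$ with the list assignment $L'$ defined by $L'(u):=L(u)\setminus\{\alpha(v)\}$ for $u\in N(v)$ and $L'(u):=L(u)$ otherwise. Both $\alpha|_{G-v}$ and $\beta|_{G-v}$ are proper $L'$-colourings (since $\alpha(v)=\beta(v)$), and the inequality $|L'(u)|\ge d_{G-v}(u)+2$ is preserved, so induction yields a sequence of length at most $2(n-1)-1=2n-3$, which lifts directly to $G$. If instead every vertex has $\alpha(v)\neq\beta(v)$ but some $v$ satisfies $\beta(v)\notin\alpha(N(v))$, then I recolour $v$ directly to $\beta(v)$ in one step and reduce to the previous case, yielding $1+(2n-3)=2n-2$ steps.

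The hard case remains: for every $v\in V(G)$ we have $\alpha(v)\neq\beta(v)$ and $\beta(v)\in\alpha(N(v))$. Then the colour-shift digraph $D_{\alpha,\beta}$ has an outgoing arc at every vertex, so it contains a directed cycle $v_1\to v_2\to\cdots\to v_k\to v_1$ with $\alpha(v_{i+1\bmod k})=\beta(v_i)$. The plan is to resolve the cycle using a single \emph{buffer} recolouring: pick an intermediate colour $c\in L(v_1)\setminus(\alpha(N(v_1))\cup\{\alpha(v_1)\})$, which exists because the forbidden set has size at most $d(v_1)+1$ while $|L(v_1)|\ge d(v_1)+2$. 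I recolour $v_1\mapsto c$, then cascade through $v_k,v_{k-1},\ldots,v_2$ in reverse order, each to its target $\beta(v_i)$, since in the reverse order each such step becomes unblocked as soon as the next cycle vertex has vacated the colour $\alpha(v_{i+1})=\beta(v_i)$. Finally I recolour $v_1\mapsto\beta(v_1)$. This resolves the cycle in $k+1$ recolourings for $k$ vertices, and applying the inductive hypothesis to $G-V(C)$ with appropriately restricted lists (each $u\notin V(C)$ loses at most $|N(u)\cap V(C)|$ allowed colours, exactly matching the drop in degree) contributes at most $2(n-k)-1$ further steps, for a total of $(k+1)+(2(n-k)-1)=2n-k\le 2n-1$.

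The main obstacle is verifying properness throughout the cycle rotation, since a cycle vertex $v_i$ may have a neighbour outside the cycle whose $\alpha$-colour equals the target $\beta(v_i)$, potentially blocking the cascade. Overcoming this requires choosing the buffer colour $c$ with extra care (exploiting the slack in $|L(v_1)|\ge d(v_1)+2$ to also avoid relevant $\beta$-colours on $N(v_1)$), and possibly selecting a specific directed cycle in $D_{\alpha,\beta}$, for instance a shortest one or one inside an inclusion-minimal strongly connected component, so as to localise the possible conflicts. A secondary subtlety is to ensure that once the cycle has been processed, the residual problem on $G-V(C)$ is a legitimate instance to which induction applies, i.e.\ that the restricted lists still satisfy $|L'(u)|\ge d_{G-V(C)}(u)+2$ and that the current colouring on $V(G)\setminus V(C)$ is still a proper $L'$-colouring.
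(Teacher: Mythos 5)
Your reductions for the cases where some vertex has $\a(v)=\b(v)$, or where some vertex has $\b(v)\notin\a(N(v))$, are both fine and match what one would hope. The gap is in your third case, and it is not a matter of ``extra care'' or of choosing a better cycle: the cascade you describe can fail, and the slack in $\lvert L(v_1)\rvert\ge d(v_1)+2$ is provably insufficient to fix it.

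Concretely, take $G=C_4$ with vertices $v_1,\dots,v_4$ and $L(v_i)=[4]$, so $\lvert L(v_i)\rvert=4=d(v_i)+2$ exactly, and let $\a(v_i)=i$ and $\b(v_i)\equiv i+1\pmod 4$. Then $D_{\a,\b}$ is the single directed $4$-cycle $v_1\to v_2\to v_3\to v_4\to v_1$, which is also the (unique and sink) strongly connected component, so all your fallback choices of cycle coincide. Your buffer colour must avoid $\a(N[v_1])=\{1,2,4\}$, forcing $c=3$. Recolouring $v_1\mapsto 3$, then $v_4\mapsto 1$, then $v_3\mapsto 4$ all succeed, but the step $v_2\mapsto\b(v_2)=3$ is blocked by its neighbour $v_1$, which is parked at $3$. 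And indeed, the set you would need $c$ to avoid in order to unblock this (namely $\a(N[v_1])\cup\b(N(v_1))$) already equals the whole list $[4]$ here, so no buffer colour works. Note that this example has \emph{no} non-cycle neighbours at all; the blocking is entirely internal to the cycle, so your diagnosis of the failure mode (conflicts with outside $\a$-colours) is also incomplete. There is a second, related hole: even if the cascade succeeded, the residual colouring on $G-V(C)$ need not be a proper $L'$-colouring, since any arc of $D_{\a,\b}$ from a cycle vertex to an outside vertex $u$ means $\a(u)=\b(v)$, which is precisely a colour you just removed from $L'(u)$.

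The paper avoids all of this by inducting not on $n(G)$ but on $\lvert\b(V(G))\rvert$, processing an entire colour class at once. One picks a colour $c$ with $\lvert\a^{-1}(c)\rvert\le\lvert\b^{-1}(c)\rvert$ (which exists by averaging), recolours every vertex of $\a^{-1}(c)$ away from $c$ (possible since $\lvert L(v)\rvert\ge d(v)+2$), then recolours every vertex of $\b^{-1}(c)$ to $c$ (possible since $\b^{-1}(c)$ is independent and $c$ is now nowhere on its neighbourhoods). This uses at most $2\lvert\b^{-1}(c)\rvert$ steps, after which $\b^{-1}(c)$ is deleted, $c$ is dropped from the lists of its neighbours, and induction gives $2(n-\lvert\b^{-1}(c)\rvert)-1$ further steps. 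Because it locks in a whole $\b$-colour class in one pass, it never creates the kind of temporary obstruction your cycle cascade runs into, and the accounting is immediate. I would recommend abandoning the cycle-cascade route and trying this colour-class induction instead.
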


\begin{proof}
We use induction on $|\b(V(G))|$.
The case $|\b(V(G))|=1$, is trivial, since then $G$ is an independent set;
starting from $\a$, we can recolour each vertex $v$ to $\b(v)$.
We use at most $n(G)$ recolouring steps, which suffices since $n(G)\le 2n(G)-1$.

Now assume $|\b(V(G))|\ge 2$.  For each colour $c$, let 
$\a^{-1}(c):=\{v\in V(G)\mbox{ s.t. }\a(v)=c\}$ and
$\b^{-1}(c):=\{v\in V(G)\mbox{ s.t. }\b(v)=c\}$.  Since $\sum_{c\in
\a(V(G))}|\a^{-1}(c)|= n(G)=\sum_{c\in \b(V(G))}|\b^{-1}(c)|$, there
exists $c$ such that $|\a^{-1}(c)|\le |\b^{-1}(c)|$.
Starting from $\a$, for each $v\in \a^{-1}(c)$, recolour $v$ to avoid
colour $c$.  Now for each $v\in \b^{-1}(c)$, recolour $v$ with $c$. 
After at most $2|\b^{-1}(c)|$ steps, we have $|\b^{-1}(c)|$ more
vertices that are coloured to agree with $\b$.  Now we delete these
$|\b^{-1}(c)|$ vertices, and delete the colour $c$ from $L(v)$ whenever
$v$ had some neighbours in $\b^{-1}(c)$, and finish by induction.
\end{proof}

\begin{corollary}
\label{cor:lists:d+2}
For every graph $G$ and every $k \ge \Delta(G) + 2$, the
diameter of $\C_k(G)$ is at most $2n(G)-1$.
\end{corollary}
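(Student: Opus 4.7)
The plan is to deduce this corollary immediately from Lemma~\ref{lem:lists:d+2} by specialising to the constant list-assignment. Specifically, given a graph $G$ and an integer $k \ge \Delta(G)+2$, I would define $L(v) := [k]$ for every $v \in V(G)$. Then $|L(v)| = k \ge \Delta(G)+2 \ge \deg(v)+2$, so the hypothesis of Lemma~\ref{lem:lists:d+2} is satisfied.

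The key observation is that a function $\a \colon V(G)\to [k]$ is a proper $L$-colouring of $G$ with respect to this constant list-assignment if and only if it is a proper $k$-colouring of $G$. Consequently the reconfiguration graphs coincide: $\C_L(G) = \C_k(G)$. Applying Lemma~\ref{lem:lists:d+2} to any two proper $k$-colourings $\a$ and $\b$ yields a recolouring sequence from $\a$ to $\b$ of length at most $2n(G)-1$, hence $\dist_{\C_k(G)}(\a,\b)\le 2n(G)-1$. Taking the maximum over all pairs $(\a,\b)$ gives $\diam \C_k(G) \le 2n(G)-1$, as required.

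Since the corollary is really just a one-line consequence of the lemma, there is no genuine obstacle here; the only thing to keep track of is the trivial verification that the hypothesis carries over and that every proper $k$-colouring is an $L$-colouring for the constant list-assignment.
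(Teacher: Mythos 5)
Your proposal is correct and matches the paper's intent: the corollary is stated immediately after Lemma~\ref{lem:lists:d+2} with no separate proof, precisely because it follows by specialising to the constant list-assignment $L(v)=[k]$, exactly as you do. Nothing further is needed.
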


Now we prove Theorem~\ref{factor2:thm}(i).  For easy reference, we restate it
below.

\begin{theorem}
\label{thm:lists:d+2}
Let $G$ be an $n$-vertex graph and $L$ be a list-assignment with
$|L(v)|\ge \deg(v)+2$ for all $v\in V(G)$.  If $\a$ and $\b$ are
$L$-colourings of $G$, then we can recolour $G$ from $\a$ to $\b$ in at
most $n(G)+2\mu(G)$ steps.
\end{theorem}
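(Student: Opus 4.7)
I will proceed by induction on $n(G)$. The base case $n(G)=0$ is vacuous. For the inductive step, if $G$ is disconnected I apply the induction hypothesis to each component and sum, since both $n$ and $\mu$ are additive over components. Assume therefore that $G$ is connected. If $G$ has a near-perfect or perfect matching (equivalently, deficiency at most $1$), which in particular covers factor-critical graphs, then Lemma~\ref{lem:lists:d+2} immediately yields $\dist(\alpha,\beta)\le 2n(G)-1\le n(G)+2\mu(G)$, and I am done. So I assume $G$ is connected with deficiency $d(G):=n(G)-2\mu(G)\ge 2$.

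In the remaining case my strategy is to locate a vertex $v$ that can be handled using at most one recolouring step, then invoke the induction hypothesis on $G-v$. Two clean reductions are available when applicable. \emph{Reduction (a):} if some vertex $v$ has $\alpha(v)=\beta(v)$, I freeze $v$ at this colour and apply induction to $G-v$ with the restricted list assignment $L'(u):=L(u)\setminus\{\alpha(v)\}$ for $u\in N(v)$ (and $L'(u)=L(u)$ otherwise); the hypothesis $|L'(u)|\ge d_{G-v}(u)+2$ is preserved, both $\alpha|_{G-v}$ and $\beta|_{G-v}$ remain valid $L'$-colourings (by propriety of $\alpha,\beta$), and the induction gives $\le(n(G)-1)+2\mu(G-v)\le n(G)+2\mu(G)-1$. \emph{Reduction (b):} if $\alpha(v)\ne\beta(v)$ for every $v$ but some $v$ satisfies $\beta(v)\notin\{\alpha(u):u\in N(v)\}$, then I first recolour $v$ to $\beta(v)$ in one step and apply induction to $G-v$ with $L'(u):=L(u)\setminus\{\beta(v)\}$ for $u\in N(v)$, obtaining $\le 1+(n(G)-1)+2\mu(G)=n(G)+2\mu(G)$.

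\emph{Main obstacle.} The hardest subcase arises when $\alpha(v)\ne\beta(v)$ for every $v$ and every vertex is ``blocked'': for each $v$ some $u\in N(v)$ has $\alpha(u)=\beta(v)$. Equivalently, the colour-shift digraph $D_{\alpha,\beta}$ has minimum out-degree $\ge 1$, and so contains a directed cycle $v_1\to v_2\to\cdots\to v_k\to v_1$ satisfying $\alpha(v_{i+1})=\beta(v_i)$ (indices mod $k$). My plan is to break this cycle by assigning $v_1$ a temporary colour $t\in L(v_1)\setminus\bigl(\{\alpha(v_1)\}\cup\{\alpha(u):u\in N(v_1)\}\bigr)$ with $t\ne\beta(v_1)$, which exists because $|L(v_1)|\ge d(v_1)+2$ and $\beta(v_1)=\alpha(v_2)$ already lies in $\{\alpha(u):u\in N(v_1)\}$; then sequentially recolour $v_k,v_{k-1},\ldots,v_2$ to their targets in $\beta$, and finally recolour $v_1$ to $\beta(v_1)$. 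This ``rotation'' costs $k+1$ recolourings on the $k$ cycle-vertices, contributing one ``extra'' per cycle. The principal technical difficulties are twofold: (i) guaranteeing that no intermediate recolouring in the rotation is blocked by a further neighbour \emph{outside} the cycle, which I plan to address by choosing a shortest cycle in $D_{\alpha,\beta}$ so that potential ``chord-blockers'' would shortcut the cycle and contradict its minimality; and (ii) showing that the cumulative ``extras'' across all such cycle-rotations, spread over the induction, do not exceed the budget $2\mu(G)$. For (ii) I plan to use an amortised charging: each directed cycle $v_1 v_2\cdots v_k$ carries a matching of size $\lfloor k/2\rfloor\ge 1$ in $G$, so each extra can be assigned to a distinct matching edge among the cycle vertices, and the remaining graph (after removing the cycle) has matching number reduced commensurately, so the induction closes within budget.
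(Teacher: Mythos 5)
Your overall architecture is sound: induction on $n(G)$, reduce to connected $G$, and when the deficiency $n(G)-2\mu(G)$ is at most $1$ invoke Lemma~\ref{lem:lists:d+2} to get $2n(G)-1\le n(G)+2\mu(G)$. Your Reductions (a) and (b) are also correct, and your case split (some vertex has $\a(v)=\b(v)$, or some vertex is unblocked, or every vertex is blocked) is exhaustive. The difficulty is entirely in the hard case, and there you have a genuine gap.

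The shortest-cycle trick only rules out \emph{chord} blockers, i.e.\ arcs $\vc{v_iv_j}$ with both endpoints on your chosen cycle. It does nothing to prevent an \emph{outside} blocker: a neighbour $u\notin\{v_1,\dots,v_k\}$ of some $v_i$ with $\a(u)=\b(v_i)$. In the ``every vertex is blocked'' regime there is no reason such $u$ cannot exist, and if it does, your rotation step that recolours $v_i$ to $\b(v_i)$ produces an improper colouring because $u$ still carries $\a(u)=\b(v_i)$. The same outside-blocker condition is also exactly what you would need for the subsequent recursion on $G-\{v_1,\dots,v_k\}$ to be well-posed: after deleting $\b(v_i)$ from $L(u)$ for each cycle-neighbour $u$ of $v_i$, the restriction $\a|_{G-C}$ is an $L'$-colouring only if $\a(u)\ne\b(v_i)$ for all such $u$, which is again the absence of outside blockers. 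So the two concerns you flag are in fact the same unaddressed obstruction, and minimizing the cycle length does not resolve it.

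The paper avoids cycles altogether. When $G$ is not factor-critical, Gallai's theorem provides a vertex $v$ saturated by \emph{every} maximum matching, so $\mu(G-v)=\mu(G)-1$ and deleting $v$ buys two units of budget. The key observation is then a pigeonhole on $L(v)$: since $|L(v)|\ge \deg(v)+2>\deg(v)$ while the multiset $\{\a(w)\}_{w\in N(v)}\cup\{\b(w)\}_{w\in N(v)}$ has size $2\deg(v)$, some colour $c\in L(v)$ appears at most once among $\{\a(w),\b(w):w\in N(v)\}$. At most one neighbour $w$ then needs a single preliminary recolouring away from $c$, after which $v$ can be set to $c$, the recursion runs on $G-v$ with $c$ removed from the neighbours' lists (a valid $L'$-colouring pair on $G-v$ precisely because $c$ conflicts with at most one of $\a,\b$ on $N(v)$, and that conflict was cleared), and finally $v$ is recoloured to $\b(v)$. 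The total is $2+\bigl(n(G)-1+2(\mu(G)-1)\bigr)+1=n(G)+2\mu(G)$. This single-vertex reduction sidesteps both your outside-blocker problem and the recursion-validity problem in one stroke, which is why the paper does not need the cycle-rotation machinery at all.
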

\begin{proof}
We use induction on $n(G)$.  We assume $G$ is connected;
otherwise, we handle each component separately.  (This suffices because, for a
graph $G$ with components $G_1,\ldots,G_s$, we have $n(G)=\sum_{i=1}^sn(G_i)$
and $\mu(G)=\sum_{i=1}^s\mu(G_i)$.)
If $\mu(G-v)=\mu(G)$ for all $v\in V(G)$, then $G$ is factor-critical.
This is an easy result of Gallai, but also follows from
Theorem~\ref{EG-decomp}(2).  Now $n(G)+2\mu(G)=2n(G)-1$, so we are
done by Lemma~\ref{lem:lists:d+2}.  

Assume instead that some vertex $v$ is saturated by every maximum matching.
Note that $2\deg(v) < 2|L(v)|$. By Pigeonhole there exists
$c\in L(v)$ such that $|\b^{-1}(c)\cap N(v)|+|\a^{-1}(c)\cap
N(v)|\le 1$.  (We handle
the case when equality holds, since the other case is easier.)  Assume there
exists $w\in N(v)$ such that $\a(w)=c$ and $c\notin \cup_{x\in
N(v)\setminus\{w\}} \{\a(x),\b(x)\}$.  (If instead $\b(w)=c$, then we
swap the roles of $\a$ and $\b$.)  Form $\tilde{\a}$ from $\a$ by recolouring
$w$ from $L(w)\setminus (\a(N(w))\cup \{c\})$ and recolouring $v$ with $c$.  Let
$G':=G-v$.  Let
$L'(x):=L(x)-c$ for all $x\in N(v)$ and $L'(x):=L(x)$ for all other $x\in
V(G')$.  Denote the restrictions to $G'$ of $\tilde{\a}$ and $\b$ by $\tilde{\a}'$ and
$\b'$.  By induction, we can recolour $G'$ from $\tilde{\a}'$ to
$\b'$ in at most $n(G')+2\mu(G')$ steps. After this, we can finish by
recolouring $v$ with $\b(v)$. Thus,
$\dist_L(\a,\b)\le 2+\dist_{L'}(\tilde{\a}',\b')+1 \le
2+n(G')+2\mu(G')+1 = 2+n(G)-1+2(\mu(G)-1)+1=n(G)+2\mu(G)$.
\end{proof}


To prove Theorem~\ref{big-lists:thm}(i), we will use the following lemma to
handle the case when $G$ is factor-critical.

\begin{lem}
\label{lem:lists:2d+1}
Let $G$ be a graph and let $L$ be a list-assignment for $G$ such that
$|L(v)|\ge 2\deg(v)+1$ for all $v\in V(G)$.  If $\a$ and $\b$ are $L$-colourings
of $G$, then we can recolour $G$ from $\a$ to $\b$ in at most $\left \lfloor
\frac{3n(G)}2\right \rfloor$ steps.
\end{lem}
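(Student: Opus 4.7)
The plan is to exploit the colour-shift digraph $D := D_{\a,\b}$, restricted to the set $S := \{v \in V(G) : \a(v) \neq \b(v)\}$ of vertices that must be recoloured. Since $\a$ is proper, each vertex $v$ has at most one neighbour $w$ with $\a(w) = \b(v)$, so $D$ has out-degree at most $1$; an analogous argument using properness of $\b$ gives in-degree at most $1$. Moreover, any arc $v \to w$ with $v \in S$ forces $w \in S$, since otherwise $\a(w) = \b(w) = \b(v)$ would contradict properness of $\b$ on the edge $vw$. Therefore $D|_S$ decomposes as a disjoint union of directed paths and directed cycles, and every vertex that must change colour lies in exactly one component.

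I would then recolour $G$ component by component. For a path $v_0 \to v_1 \to \cdots \to v_k$ in $D|_S$, recolour the vertices in reverse order $v_k, v_{k-1}, \ldots, v_0$, each once to its $\b$-value; the step $v_i \mapsto \b(v_i)$ is legal because if some neighbour $w$ currently held $\b(v_i)$, then $w$ either has been recoloured already (forcing $\b(w) = \b(v_i)$, contradicting properness of $\b$) or still has colour $\a(w)$ (giving an arc $v_i \to w$ in $D$), and the unique out-neighbour of $v_i$ in $D$ is $v_{i+1}$, already recoloured. For a cycle $v_0 \to v_1 \to \cdots \to v_{k-1} \to v_0$ this topological argument fails, so I insert one intermediate step: pick $c \in L(v_0) \setminus \bigl( \a(N(v_0)) \cup \b(N(v_0)) \bigr)$, recolour $v_0$ to $c$, then recolour $v_{k-1}, v_{k-2}, \ldots, v_1$ each to its $\b$-value (now legal since $v_0$ no longer holds $\a(v_0)$), and finally recolour $v_0$ to $\b(v_0)$. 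Such a $c$ exists because $|L(v_0)| \ge 2d(v_0) + 1$ while $|\a(N(v_0)) \cup \b(N(v_0))| \le 2d(v_0)$, and automatically $c \neq \a(v_0)$ because $\a(v_0) = \b(v_{k-1}) \in \b(N(v_0))$. A path on $t$ vertices then costs $t$ recolourings and a cycle of length $t$ costs $t+1$.

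To avoid interference between different components, I would process all paths first and then the cycles one at a time. The interactions are harmless: if a step $v \mapsto \b(v)$ were blocked by a vertex $w$ outside $v$'s own component, then either $w$ has its $\a$-colour (forcing an arc $v \to w$, and hence $w$ in the same component as $v$, a contradiction) or its $\b$-colour (contradicting properness). The condition $c \notin \b(N(v_0))$ ensures that the intermediate colour on $v_0$ remains compatible with each neighbour even after that neighbour is later recoloured to its $\b$-value.

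Summing the costs gives a total of $|S| + r$ recolourings, where $r$ is the number of cycles in $D|_S$. Each cycle has at least two vertices, so $r \le \lfloor |S|/2 \rfloor$, whence the total is at most $|S| + \lfloor |S|/2 \rfloor = \lfloor 3|S|/2 \rfloor \le \lfloor 3n(G)/2 \rfloor$, matching the claimed bound. The main obstacle is the bookkeeping: one must verify that every individual recolouring preserves properness. Once the path-and-cycle decomposition of $D|_S$ is in hand, this reduces to the out-degree-$1$ structure of $D$, and the hypothesis $|L(v)| \ge 2d(v) + 1$ leaves precisely the slack required to pay for one extra recolouring per cycle.
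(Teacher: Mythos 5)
The central structural claim in your argument is false, and the error comes right at the start. You assert that \emph{``since $\a$ is proper, each vertex $v$ has at most one neighbour $w$ with $\a(w)=\b(v)$''}, and similarly for in-degree using properness of $\b$. But properness of $\a$ only guarantees $\a(v)\neq\a(w)$ for every edge $vw$; it says nothing about $\a$ being injective on $N(v)$. For example, on the star $K_{1,3}$ with centre $v$ and leaves $w_1,w_2,w_3$, take $\a(v)=1$, $\a(w_i)=2$ for all $i$, $\b(v)=2$, $\b(w_i)=1$. Both colourings are proper, but $D_{\a,\b}$ contains all three arcs $v\to w_i$ and all three arcs $w_i\to v$, so $v$ has out-degree and in-degree $3$. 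Hence $D_{\a,\b}\restriction_S$ is not in general a disjoint union of directed paths and directed cycles, and the entire recolouring schedule you build on that decomposition has no foundation. The subsequent ``no interference'' argument, which relies on each vertex having a unique out-neighbour, collapses for the same reason.

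For comparison, the paper's proof does not attempt a structural decomposition of the colour-shift digraph at all. It splits $V(G)$ into $V_1=\{v:\a(v)>\b(v)\}$ and $V_2=\{v:\a(v)<\b(v)\}$, assumes w.l.o.g.\ $|V_1|\le n/2$, first recolours each $v\in V_1$ to a ``parking'' colour avoiding both current and target colours of its neighbours (possible because $|L(v)|\ge 2\deg(v)+1$), and then sweeps through colour values from largest to smallest, recolouring every vertex with target colour $j$ in pass $j$. The descending sweep, combined with the initial parking of $V_1$, guarantees properness at every step, for a total of at most $n+|V_1|\le\lfloor 3n/2\rfloor$ recolourings. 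If you want to rescue a digraph-based approach, you would need to contend with $D_{\a,\b}\restriction_S$ being an arbitrary digraph with possibly high in- and out-degrees, which seems substantially harder than the ordering trick the paper uses.
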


\begin{proof}
Let $\a$ and $\b$ be arbitrary $L$-colourings of $G$.
(Recall that all colours are positive integers.)
Let $V_1$ and $V_2$ be the subsets of $V(G)$, respectively,
for which $\a(v)> \b(v)$ and $\a(v)< \b(v)$.  Let $n_1:=|V_1|$ and $n_2:=|V_2|$.
Since $n_1+n_2 \le n(G)$, we assume by symmetry that $n_1 \le \frac {n(G)}2$;
otherwise,
we interchange $\a$ and $\b$.
We show how to recolour $G$ from $\a$ to $\b$ using at most $n(G)+n_1 \le
n(G) + \left \lfloor \frac{n(G)}2\right \rfloor$ steps. 

First, iteratively for every $v \in V_1$ we recolour $v$ from $L(v)\setminus
\cup_{w\in N(v)}\{\b(w),\c(w)\}$, where $\c$ is the current
colouring of $G$.  By definition this recolouring is proper (we actually do not
need to recolour $v$ if $\c(v) \not= \b(w)$ for
all $w \in N(v)$).  Next, iteratively for $j$ from $\max\{\cup_{v \in V}
L(v)\}$ to $\min\{\cup_{v \in V} L(v)\}$, we recolour with $j$ all vertices $v
\in V(G)$ for which $\b(v)=j$.  In these steps, we only have proper colourings, as we will now show. Consider a neighbour $w$ of $v$.  If $w$ has already been recoloured with some
$\gamma(w)$, then $\gamma(w)\ne \b(v)$, by construction.  If $w$ has not been
recoloured, then $\a(w)\le \b(w)<\b(v)$.
\end{proof}


The following classical result is helpful in our proof of
Theorem~\ref{big-lists:thm}(i).

\begin{theorem} [Edmonds--Gallai Decomposition Theorem]
\label{EG-decomp}
For a graph $G$, let
\begin{enumerate}
\item[~~~~] $V_1(G):=\{v\in V(G)$ such that some maximum matching avoids $v\}$,
\item[~~~~] $V_2(G):=\{v\in V(G)$ such that $v$ has a neighbour in $V_1(G)$, but
$v\notin V_1(G)\}$,
\item[~~~~] $V_3(G):=V(G)\setminus (V_1(G)\cup V_2(G))$.
\end{enumerate}
\noindent
Now the following statements hold.\footnote{We prefer the names $V_1(G)$,
$V_2(G)$, $V_3(G)$ over the standard terminology $D(G)$, $A(G)$, and $C(G)$,
since the latter terms conflict with our use of $D=(V,A)$ for a digraph $D$ with
vertex set $V$ and arc set $A$.}
\begin{enumerate}
\item The subgraph induced by $V_3(G)$ has a perfect matching.
\item The components of the subgraph induced by $V_1(G)$ are factor-critical.
\item If $M$ is any maximum matching, then $M$ contains a perfect matching of
each component of $V_3(G)$, and $M$ contains a near-perfect matching of each
component of $V_1(G)$, and $M$ matches all vertices of $V_2(G)$ with vertices
in distinct components of $V_1(G)$.
\item $\mu(G)=\frac12\left(|V(G)|-c(V_1(G))+|V_2(G)|\right)$, where $c(V_1(G))$
denotes the number of components of the graph spanned by $V_1(G)$.
\end{enumerate}
\end{theorem}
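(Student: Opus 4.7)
The plan is to prove the four parts in the order (2), (3), (1), (4), using Berge's Theorem (a matching $M$ is maximum iff $G$ contains no $M$-augmenting path), symmetric differences of pairs of maximum matchings, and Gallai's Lemma on factor-critical graphs as the principal tools. Parts (1) and (4) will fall out of (3) as easy consequences, and the structural claim (3) relies essentially on (2).

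For (2), I would reduce to Gallai's Lemma: any connected graph $H$ in which every vertex is missed by some maximum matching of $H$ is factor-critical. Gallai's Lemma itself admits a short inductive proof via the symmetric difference of two near-perfect matchings of $H$ that miss different vertices. The content of (2) is then to verify the hypothesis of Gallai's Lemma on each connected component $C$ of $V_1(G)$: for every $v \in C$, I would take a maximum matching $M$ of $G$ that misses $v$ (which exists by the definition of $V_1(G)$) and argue that $M \cap E(G[C])$ is a maximum matching of $G[C]$, by showing that any augmenting path for $M \cap E(G[C])$ inside $C$ could be extended, using the $M$-edges of $G$ leaving $C$ at its endpoints, into an alternating walk of $G$ that ultimately yields an augmenting path for $M$ in $G$, contradicting maximality.

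For (3), I would fix an arbitrary maximum matching $M$ of $G$ and establish the following in sequence. (a) Every $u \in V_2(G)$ is $M$-saturated and matched by $M$ into some component $C$ of $V_1(G)$, for otherwise a near-perfect matching of an adjacent component of $V_1(G)$ missing the right vertex could be combined with $M$ and the edge at $u$ to yield a maximum matching of $G$ missing $u$, contradicting $u\notin V_1(G)$. (b) Distinct vertices of $V_2(G)$ are matched to distinct components of $V_1(G)$, by an analogous augmenting argument. (c) Inside each component $C$ of $V_1(G)$, the restriction of $M$ to $G[C]$ covers all but exactly one vertex of $C$, by (2) together with parity, since at most one vertex of $C$ is matched outside $C$. (d) The restriction of $M$ to $V_3(G)$ is a perfect matching of $G[V_3(G)]$, because any uncovered vertex there, or any $M$-edge leaving $V_3(G)$, would force that vertex to lie in $V_1(G)$ or $V_2(G)$.

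Claim (1) is precisely statement (d). For (4), the unmatched vertices under any maximum matching $M$ are exactly one representative from each of the $c(V_1(G))-|V_2(G)|$ components of $V_1(G)$ that do not receive a $V_2(G)$-match, giving $2\mu(G)=n(G)-c(V_1(G))+|V_2(G)|$. I expect the main obstacle to be (2), specifically the step that verifies maximality of $M \cap E(G[C])$ in $G[C]$: an augmenting path for this restriction may have endpoints that are $M$-matched outside $C$, so one must carefully trace the alternating structure leaving $C$ and argue that it ultimately closes into an augmenting path for $M$ in $G$. Gallai's Lemma itself is a further, though classical, technical hurdle.
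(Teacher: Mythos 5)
The paper does not prove this statement: it is cited as the classical Edmonds--Gallai Decomposition Theorem and is used as a black box (only part~(4), together with the structure in parts~(1)--(3), is invoked in the proof of Theorem~\ref{thr:proofmainconj_2d+1}). So there is no in-paper proof to compare against, and your proposal must be judged on its own.

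Your plan is the standard textbook route (Gallai's Lemma, Berge's Theorem, symmetric differences), and the ordering (2)~$\to$~(3)~$\to$~(1),(4) together with the final counting is sound in outline. You also correctly flag where the difficulty lies. But the step you flag is not merely a technical hurdle whose details need tracing; as written, the argument for it has a real gap. To apply Gallai's Lemma to a component $C$ of $G[V_1(G)]$ you want, for each $v\in C$, a maximum matching $M$ of $G$ missing $v$ such that $M\cap E(G[C])$ is maximum in $G[C]$. If $P$ is an augmenting path for $M\cap E(G[C])$ inside $C$ with endpoints $a,b$, it can happen that both $a$ and $b$ are $M$-saturated, matched across to $V_2(G)$; then $u_b b\,P\,a u_a$ is an $M$-alternating path that begins and ends with $M$-edges, and taking its symmetric difference with $M$ \emph{decreases} $|M|$, while taking the difference with $P$ alone creates degree-$2$ vertices at $a$ and $b$. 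So neither switch produces an augmenting path for $M$ in $G$, and the attempted extension of the walk into $V_2(G)\cup V_3(G)$ need not terminate at an $M$-exposed vertex — you are not entitled to assume it ``ultimately closes.'' The standard way to discharge this is to bring in extra machinery you have not listed: either first prove the Tutte--Berge formula and deduce the decomposition from an optimal Tutte set, or prove a stability lemma (deleting a vertex of $V_2(G)$ leaves $V_1,V_3$ unchanged, shrinks $V_2$ by one, and drops $\mu$ by one) and induct on $|V_2(G)|$. There is also a latent circularity concern: your sketch of (3)(a)--(c) leans on (2), while making (2) airtight by the route you describe is tantamount to already knowing part of (3) (that at most one vertex of $C$ is matched across by any maximum matching). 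Pinning down a non-circular order of deductions is itself part of the work.
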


Now we prove Theorem~\ref{big-lists:thm}(i).  For easy reference, we restate it
below. 

\begin{theorem}
\label{thr:proofmainconj_2d+1}
Let $G$ be a graph and $L$ be a list-assignment for $G$ with $|L(v)|\ge
2\deg(v)+1$ for all $v\in V(G)$.  If $\alpha$ and $\beta$ are $L$-colourings of
$G$, then we can recolour $G$ from $\alpha$ to $\beta$ in at most $n(G)+\mu(G)$ steps.
\end{theorem}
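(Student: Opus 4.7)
The plan is to mirror the inductive structure of the proof of Theorem~\ref{thm:lists:d+2}, but exploit the larger lists $|L(v)|\ge 2\deg(v)+1$ to avoid the ``extra'' recolouring of a neighbour that caused the factor of 2 in $\mu(G)$ before. I will induct on $n(G)$, assume $G$ is connected (handling components one by one, since $n$ and $\mu$ are additive over components), and split into two cases based on the Edmonds--Gallai structure.

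For the base case, suppose every vertex $v$ satisfies $\mu(G-v)=\mu(G)$, i.e., $V_1(G)=V(G)$. Then by Gallai's theorem (equivalently, Theorem~\ref{EG-decomp}(2) applied to the single component $G$), $G$ is factor-critical, so $n(G)$ is odd and $\mu(G)=(n(G)-1)/2$. In particular $n(G)+\mu(G)=\lfloor 3n(G)/2\rfloor$, so Lemma~\ref{lem:lists:2d+1} gives precisely the desired bound. This is the only place where the doubled-list hypothesis is really used in full strength.

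For the inductive step, assume some vertex $v$ lies in $V_2(G)\cup V_3(G)$; equivalently $\mu(G-v)=\mu(G)-1$. Because $|L(v)|\ge 2\deg(v)+1$ and $|\alpha(N(v))\cup\beta(N(v))|\le 2\deg(v)$, there exists a colour $c\in L(v)\setminus(\alpha(N(v))\cup\beta(N(v)))$. Now I first recolour $v$ from $\alpha(v)$ to $c$ (valid since $c\notin\alpha(N(v))$). Let $G':=G-v$ and let $L'(w):=L(w)\setminus\{c\}$ for $w\in N(v)$ and $L'(w):=L(w)$ otherwise. Since $|L(w)|\ge 2\deg_G(w)+1=2\deg_{G'}(w)+3$ for $w\in N(v)$, the new list-assignment still satisfies $|L'(w)|\ge 2\deg_{G'}(w)+1$ on $G'$. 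The restrictions $\alpha|_{G'}$ and $\beta|_{G'}$ are $L'$-colourings (since $c\notin\alpha(N(v))\cup\beta(N(v))$), so by induction I can transform $\alpha|_{G'}$ into $\beta|_{G'}$ in at most $n(G')+\mu(G')=(n(G)-1)+(\mu(G)-1)$ steps. During this entire phase the colour $c$ on $v$ stays proper, because no neighbour ever uses $c$. Finally I recolour $v$ from $c$ to $\beta(v)$ (valid since $\beta$ is proper). The total count is $1+(n(G)-1)+(\mu(G)-1)+1=n(G)+\mu(G)$, as required.

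The only real obstacle is the factor-critical case, and the key observation that makes the argument close up cleanly is the arithmetic coincidence $\lfloor 3n(G)/2\rfloor = n(G)+(n(G)-1)/2 = n(G)+\mu(G)$ for factor-critical $G$; this is exactly why Lemma~\ref{lem:lists:2d+1} is formulated with the $\lfloor 3n(G)/2\rfloor$ bound rather than a weaker one. The inductive step, by contrast, is essentially routine once the right vertex $v$ and colour $c$ are chosen: the stronger list-size hypothesis simultaneously (a) guarantees a colour $c$ disjoint from both $\alpha(N(v))$ and $\beta(N(v))$, saving the extra recolouring of a neighbour that was needed in Theorem~\ref{thm:lists:d+2}, and (b) leaves enough room in each list after deleting $c$ so that the hypothesis persists on $G-v$.
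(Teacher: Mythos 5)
Your proof is correct. It does depart from the paper's own argument for this theorem, so let me briefly compare.

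The paper's proof is ``flat'': it applies the Edmonds--Gallai decomposition once, recolours every vertex of $V_2(G)$ to a colour avoiding both the current and the target colours on its neighbourhood, handles each component of $G-V_2(G)$ via Lemma~\ref{lem:lists:2d+1}, restores $V_2(G)$ to $\beta$, and then verifies the count using the exact Edmonds--Gallai identity $\mu(G)=\frac12\bigl(|V(G)|-c(V_1(G))+|V_2(G)|\bigr)$ together with the parity facts that $V_3$-components are even and $V_1$-components are odd. Your proof instead mirrors the recursive template the paper actually uses for Theorem~\ref{thm:lists:d+2}: induct on $n(G)$, invoke Lemma~\ref{lem:lists:2d+1} in the factor-critical base case, and otherwise peel off one vertex $v$ with $\mu(G-v)=\mu(G)-1$, recolouring $v$ twice and recursing. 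The larger lists let you pick $c\in L(v)$ disjoint from $\alpha(N(v))\cup\beta(N(v))$, which is exactly what eliminates the extra neighbour-recolour that produced the factor $2\mu$ in Theorem~\ref{thm:lists:d+2}. Your route trades the global counting identity for a cleaner one-vertex-at-a-time bookkeeping (each deletion drops both $n$ and $\mu$ by one, contributing a net of $2$ steps), and only needs the Gallai dichotomy rather than the full decomposition formula; the paper's route does everything in a single pass but requires the more delicate arithmetic at the end. Both rely on the same two ingredients --- Lemma~\ref{lem:lists:2d+1} for the factor-critical (or perfectly-matchable) pieces, and the $2\deg(v)+1$ hypothesis to find a colour safe for both $\alpha$ and $\beta$ --- so the insight is the same, but the assembly is genuinely different and yours arguably requires less arithmetic.
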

\begin{proof}
We refer to $V_1(G)$, $V_2(G)$, and $V_3(G)$, as defined in Theorem~\ref{EG-decomp}.
Starting from $\alpha$, recolour each vertex $v\in V_2(G)$ to avoid each colour used
currently on $N(v)$ and also to avoid each colour used on $N(v)$ in $\b$;
call the resulting colouring $\tilde{\a}(G)$.  Let $G':=G-V_2(G)$.
For each $v\in V(G')$, let $L'(v):=L(v)\setminus (\cup_{w\in N(v)\cap
V_2(G)}\tilde{\a}(w))$.  Note that $|L'(v)|\ge 2\deg_{G'}(v)+1$.
Thus, for each component $H$ of $G'$, we can recolour $H$ from $\alpha$
to $\beta$ in at most $\lfloor 3n(H)/2\rfloor$ steps, by Lemma~\ref{lem:lists:2d+1}.
Finally, we recolour each vertex of $V_2(G)$ to its colouring $\b$.
Let $\comp(G')$ denote the set of components of $G'$.
The number of recolouring steps used is at most $2|V_2(G)|+\sum_{H\in \comp(G')}\lfloor
3n(H)/2\rfloor = 2|V_2(G)|+\frac32(|V(G)|-|V_2(G)|)-\frac12c(V_1(G)) =
|V(G)|+\frac12(|V(G)|-c(V_1(G))+|V_2(G)|) = |V(G)|+\mu(G)$.
The final equality uses Theorem~\ref{EG-decomp}(4).
The first equality uses that the vertices of a component $H \in \comp(G')$ are
contained either in $V_1(G)$ or in $V_3(G)$. If $V(H) \subseteq V_3(G)$ then
$n(H)$ is even by Theorem~\ref{EG-decomp}(3). On the other hand, if $V(H)
\subseteq V_1(G)$ then $n(H)$ is odd  by Theorem~\ref{EG-decomp}(2).
\end{proof}

By the comment after Proposition~\ref{lower-bounds:prop},
Theorem~\ref{thr:proofmainconj_2d+1} is sharp.
For (non-list) colouring, we get the following.

\begin{cor}
    For every graph $G$ and every $k \ge 2\Delta(G) +1$, we have $\diam \C_k(G)=n(G)+\mu(G)$.
\end{cor}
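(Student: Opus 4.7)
The corollary asserts an equality, so the plan is to establish matching upper and lower bounds, both of which should fall out immediately from results already proved in the paper. The hypothesis $k \ge 2\Delta(G)+1$ is precisely what is needed to simultaneously satisfy the assumptions of these two earlier results.

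For the upper bound, I would apply Theorem~\ref{thr:proofmainconj_2d+1} to the constant list-assignment $L(v) := [k]$ for every $v \in V(G)$. Since $k \ge 2\Delta(G)+1 \ge 2d(v)+1$, this $L$ satisfies the hypothesis of that theorem, and clearly $\C_L(G) = \C_k(G)$. Hence for any two $k$-colourings $\alpha, \beta$ we get $\dist(\alpha,\beta) \le n(G) + \mu(G)$, so $\diam \C_k(G) \le n(G)+\mu(G)$.

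For the matching lower bound, I would invoke Proposition~\ref{lower-bound-prop}, which guarantees $\diam \C_k(G) \ge n(G)+\mu(G)$ whenever $k \ge 2\Delta(G)$. This hypothesis is certainly met here, since $2\Delta(G)+1 > 2\Delta(G)$. Combining the two inequalities yields the claimed equality. There is no genuine obstacle: the corollary is simply the conjunction of an existing lower bound with the specialisation of Theorem~\ref{big-lists:thm}(i) to constant lists, and the single extra unit in the hypothesis (relative to Proposition~\ref{lower-bound-prop}) is exactly what bridges the gap to the list-colouring setting.
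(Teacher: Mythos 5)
Your proposal is correct and matches the paper's own proof exactly: the upper bound comes from Theorem~\ref{thr:proofmainconj_2d+1} applied to the constant list-assignment $L(v)=[k]$, and the lower bound comes from Proposition~\ref{lower-bound-prop}. Nothing further is needed.
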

\begin{proof}
The upper and lower bounds follow, respectively, from Theorem~\ref{thr:proofmainconj_2d+1} and 
Proposition~\ref{lower-bound-prop}.
\end{proof}


\section{Proving the Main Results for Correspondences}
\label{main-thms-corr:sec}

In this section, we prove the correspondence colouring portions of our main results:
Theorems~\ref{factor2:thm}(ii) and~\ref{big-lists:thm}(ii).

\begin{theorem}
\label{thr:proofmainconj_2d+1_DP}
Let $G$ be a graph and $(L,H)$ be a correspondence cover with $|L(v)|\ge
2\deg(v)+1$ for all $v\in V(G)$.  If $\a$ and $\b$ are $(L,H)$-colourings of $G$, then $G$ can
be recoloured from $\alpha$ to $\beta$ in at most $n(G)+\tau(G)$ steps.
\end{theorem}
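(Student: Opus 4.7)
The plan is to pick a minimum vertex cover $S=\{v_1,\ldots,v_s\}$ with $s=\tau(G)$, let $I:=V(G)\setminus S$ (an independent set), and recolour $\alpha\to\beta$ in three phases via an intermediate colouring $\gamma$ defined only on $S$. In Phase~1, I process $v_1,\ldots,v_s$ in this order, recolouring each $v_i$ from $\alpha(v_i)$ to some $\gamma(v_i)\in L(v_i)$. In Phase~2, I recolour each $v\in I$ with $\alpha(v)\neq\beta(v)$ directly from $\alpha(v)$ to $\beta(v)$. In Phase~3, I process $v_s,\ldots,v_1$ in reverse order, recolouring each $v_i$ with $\gamma(v_i)\neq\beta(v_i)$ from $\gamma(v_i)$ to $\beta(v_i)$. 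The total number of recolourings is at most $2|S|+|I|=n(G)+\tau(G)$, which matches the target bound; this is essentially the trajectory forced by the lower bound construction for correspondence covers earlier in the paper, with the $S$-vertices paying the $\tau(G)$ surcharge.

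The core step is choosing each $\gamma(v_i)$ inductively so that every recolouring step is a proper $(L,H)$-colouring. Given $\gamma(v_1),\ldots,\gamma(v_{i-1})$, I pick $\gamma(v_i)\in L(v_i)$ so that it is unmatched in $H$ with (a) both $\alpha(w)$ and $\beta(w)$, for each $w\in N(v_i)\cap I$; (b) both $\alpha(v_j)$ and $\beta(v_j)$, for each $v_j\in N(v_i)\cap S$ with $j>i$; and (c) $\gamma(v_j)$, for each $v_j\in N(v_i)\cap S$ with $j<i$. Each neighbour of $v_i$ forbids at most two colours of $L(v_i)$, so at most $2d(v_i)$ are excluded and the hypothesis $|L(v_i)|\geq 2d(v_i)+1$ leaves at least one valid choice. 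These requirements are tailored to each phase: in Phase~1 step~$i$, every neighbour of $v_i$ currently carries its $\alpha$-colour (for $I$-vertices and not-yet-processed $S$-neighbours) or its $\gamma$-colour (for already-processed $S$-neighbours); in Phase~2, every neighbour of $v\in I$ is in $S$ at $\gamma$, and compatibility with $\beta(v)$ follows from clause~(a) applied in the definition of each relevant $\gamma(v_i)$ with $w=v$; in Phase~3 step~$i$, the $S$-neighbours $v_j$ with $j>i$ are already at $\beta(v_j)$ (handled by properness of $\beta$), and those with $j<i$ are at $\gamma(v_j)$, compatible with $\beta(v_i)$ via clause~(b) applied when $\gamma(v_j)$ was chosen, taking $k=i>j$.

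The main obstacle is handling adjacencies inside $S$. A naive "safe" colour $c_v$ compatible with every $\alpha(w)$ and every $\beta(w)$ over $w\in N(v)$ already exhausts $2d(v)$ forbidden slots in $L(v)$, leaving only a single admissible colour and no slack to additionally demand compatibility between $c_{v_i}$ and $c_{v_j}$ for adjacent $v_i,v_j\in S$. The ordering trick fixes this by letting the $S$-neighbours of $v_i$ contribute different constraints depending on their position relative to $i$: those after $v_i$ still use the two constraints $\alpha(v_j)$ and $\beta(v_j)$, but those before $v_i$ contribute only the single constraint $\gamma(v_j)$, since by then $v_j$ has already moved away from $\alpha(v_j)$ and will only return to $\beta(v_j)$ in Phase~3 after $v_i$ itself has moved back to $\beta(v_i)$. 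This balances the constraint budget at exactly $2d(v_i)$ per vertex and makes the hypothesis $|L(v_i)|\ge 2d(v_i)+1$ exactly tight.
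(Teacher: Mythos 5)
Your proposal is correct and takes essentially the same three-phase route as the paper: fix a minimum vertex cover $S$, move each $v\in S$ to a safe intermediate colour $\gamma(v)$, recolour $V(G)\setminus S$ to $\beta$, then move $S$ back to $\beta$, with $|L(v)|\ge 2\deg(v)+1$ providing exactly enough slack for the choice of $\gamma(v)$. Your explicit forward/reverse ordering of $S$ lets you forbid only $\gamma(v_j)$ (not also $\beta(v_j)$) for already-processed $S$-neighbours, while the paper simply avoids the current and final colour of every neighbour, which is order-agnostic but still fits within the same budget of at most $2\deg(v)$ forbidden colours; both variants work.
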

\begin{proof}
Let $S$ be a minimum vertex cover.  For every vertex $v\in S$, we recolour $v$
with a colour that, for every $w \in N(v)$, conflicts (under cover $(L,H)$) 
with neither the current colour $\c(w)$ nor the final colour $\b(w)$.
This is possible because $|L(v)|\ge 2\deg(v)+1$.
Now we recolour every vertex $v \in V(G) \backslash S$ with $\b(v)$ and then
do this also for every vertex in $S$.
Note that we use at most $n(G)+\tau(G)$ recolourings.
\end{proof}

\begin{theorem}
\label{thm:lists:d+2_DP}
Let $G$ be a graph and $(L,H)$ be a correspondence cover for $G$ such that
$|L(v)|\ge \deg(v)+2$ for all $v\in V(G)$.  If $\a$ and $\b$ are 
$(L,H)$-colourings of $G$, then we can recolour $G$ from $\a$ to $\b$ in at
most $n(G)+2\tau(G)$ steps.
\end{theorem}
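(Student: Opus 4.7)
The plan is to adapt the inductive strategy of Theorem~\ref{thm:lists:d+2}, replacing the matching parameter by the vertex cover parameter. I proceed by induction on $n(G)$ and handle components separately, since both $n$ and $\tau$ are additive over components. If $G$ has no edges, then $\tau(G)=0$ and one can recolour each vertex independently to its target in $n(G)$ steps.

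Otherwise, since $G$ has an edge, some vertex $v$ lies in a minimum vertex cover of $G$, so $\tau(G-v)=\tau(G)-1$. The key ingredient is a pigeonhole argument. For each $c\in L(v)$ define
\[
A_c:=\{u\in N(v):(u,\a(u))\text{ is matched to }(v,c)\text{ in }H\},
\]
and define $B_c$ analogously using $\b$. Since the matching on each edge $vu$ pairs $(u,\a(u))$ with at most one colour of $L(v)$, we have $\sum_{c\in L(v)}(|A_c|+|B_c|)\le 2d(v)$, and because $|L(v)|\ge d(v)+2$ some colour $c$ must satisfy $|A_c|+|B_c|\le 1$. After possibly interchanging $\a$ and $\b$, I may assume $|B_c|=0$ and $|A_c|\le 1$.

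In the easier case $|A_c|=0$, I recolour $v$ to $c$ (proper by the definition of $A_c$), invoke the inductive hypothesis on $G-v$ with the restricted cover $(L'',H)$ obtained by removing from each $L(u)$, $u\in N(v)$, the single colour matched to $(v,c)$, and then recolour $v$ to $\b(v)$, for a total of at most $1+(n(G)+2\tau(G)-3)+1=n(G)+2\tau(G)-1$ steps. In the harder case $|A_c|=1$, let $w$ be the unique neighbour of $v$ whose current colour $\a(w)$ is matched to $(v,c)$. I first recolour $w$ to a colour $c_w\in L(w)$ that avoids (i)~$\a(w)$, (ii)~for each $u\in N(w)$ the colour of $L(w)$ matched to $\a(u)$, and (iii)~the colour of $L(w)$ matched to $(v,c)$. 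Since $\a(w)$ is itself matched to $(v,c)$, conditions (i) and (iii) coincide, so the total number of forbidden colours is at most $d(w)+1<|L(w)|$, and such a $c_w$ exists. I then recolour $v$ to $c$, apply induction to $(G-v,L'')$ to transform the current colouring into $\b|_{G-v}$, and finish by recolouring $v$ to $\b(v)$, for a total of $2+(n(G)+2\tau(G)-3)+1=n(G)+2\tau(G)$ steps.

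The main bookkeeping hurdle is to verify that the restricted cover $(L'',H)$ retains the degree hypothesis on $G-v$ and hosts both the intermediate and target colourings. The hypothesis is preserved because $|L''(u)|\ge d_G(u)+1=d_{G-v}(u)+2$ for $u\in N(v)$ while $L''(u)=L(u)$ otherwise; the target $\b|_{G-v}$ lies in $L''$ thanks to $|B_c|=0$; and the intermediate colouring (which equals $\a$ on $V(G)\setminus\{v,w\}$ and $c_w$ on $w$) lies in $L''$ thanks to $|A_c|=1$ together with condition~(iii) in the choice of $c_w$. Once these invariants are verified, the stated arithmetic closes the induction.
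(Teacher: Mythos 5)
Your proof is correct and takes essentially the same approach as the paper's: both induct by removing a vertex $v$ in a minimum vertex cover, use the pigeonhole bound $\sum_{c\in L(v)}(|A_c|+|B_c|)\le 2d(v) < 2|L(v)|$ to find a colour $c\in L(v)$ with at most one conflicting $\a$- or $\b$-colour among the neighbours, pre-recolour that single conflicting neighbour, move $v$ to $c$, recurse on $G-v$ with the restricted cover, and finish by recolouring $v$ to $\b(v)$. The only cosmetic difference is that the paper inducts on $\tau(G)$ rather than $n(G)$, which is immaterial here.
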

\begin{proof}
We use induction on $\tau(G)$.  
If $\tau(G)=0$, i.e., $G$ is an independent set, then we simply recolour every
vertex $v$ to $\b(v)$.
So assume the theorem is true for all graphs $G'$ with $0\le
\tau(G')<\tau(G)$.

Let $S$ be a minimum vertex cover of $G$ and pick $v \in S$.
%
Consider the union over all $w\in N(v)$ of the edges in $H$ from $\a(w)$ and
$\b(w)$ to $L(v)$.  By Pigeonhole, since $2\deg(v) < 2|L(v)|$,
some colour $c\in L(v)$ has at most one incident edge in this union.
(We handle the case when $c$ has exactly one incident edge, since the other case is
easier.)  Assume there exists $w_0\in N(v)$ such that $\a(w_0)$ is matched to $c$
and no other $\a(w)$ or $\b(w)$ is matched to $c$.  (If instead $\b(w_0)$ is
matched to $c$, then we swap the roles of $\a$ and $\b$.)  Form $\tilde{\a}$
from $\a$ by first recolouring $w_0$ 
with a colour different from $\a(w_0)$, that still
gives a proper $(L,H)$-colouring, and afterward recolouring $v$ with $c$.  Let
$G':=G-v$. For every $w\in N(v)$, remove from $L(w)$ the
colour $c'$ matched with $c$; call the resulting correspondence cover
$(L',H')$.
Denote the restrictions to $G'$ of $\tilde{\a}$ and $\b$ by $\tilde{\a}'$ and
$\b'$.  By induction, since $\tau(G')<\tau(G)$, we can recolour $G'$ from $\tilde{\a}'$ to
$\b'$ in at most $n(G')+2\tau(G')$ steps. After this, we can finish by
recolouring $v$ with $\b(v)$. Thus,
$\dist_{(L,H)}(\a,\b)\le 2+\dist_{(L',H')}(\tilde{\a}',\b')+1 \le
2+n(G')+2\tau(G')+1 = 2+n(G)-1+2(\tau(G)-1)+1=n(G)+2\tau(G)$.
This proves the theorem.
\end{proof}

\section{Distances in Reconfiguration Graphs of Trees}
\label{trees:sec}

For each tree $T$, it is straightforward to check that reconfiguration of correspondence
colouring is no harder than reconfiguration of list colouring.  Given a tree $T$
and a correspondence cover $(L,H)$, it is easy to construct a list-assignment $L'$
for $T$ such that $L'$-colourings of $T$ are in bijection with
$(L,H)$-colourings of $T$.  We can do this by induction on $n(G)$, by
deleting a leaf $v$ and extending the list-assignment $L'$, given by
hypothesis, for $T-v$.  So, for simplicity, we phrase all results in this
section only in terms of list colourings.

\begin{theorem}
\label{thr:n+tau_forpath}
Fix a tree $T$ and a list-assignment $L$ with $\lvert L(v) \rvert \ge
\deg(v)+2$ for every $v \in V(T).$ If $\a$ and $\b$ are proper $L$-colourings
of $T$, then the distance between $\a$ and $\b$ in the reconfiguration graph
$\C_L(T)$ is equal to $\mu(D_{\a, \b})+\sum_{v \in V} \mathbf{1}_{\a(v) \not= \b(v)}.$
\end{theorem}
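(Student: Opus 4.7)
The lower bound $\dist(\a,\b)\ge \mu(D_{\a,\b})+n'$, where $n':=\sum_{v}\mathbf{1}_{\a(v)\ne\b(v)}$, is immediate from Observation~\ref{lower-bound:obs}. For the upper bound, the plan is to induct on $n(T)$: at each step pick a leaf $v$ of $T$ with unique neighbour $w$ and case-split on how the four values $\a(v),\b(v),\a(w),\b(w)$ relate. In the three routine cases --- (A)~$\a(v)=\b(v)$; (B)~$\a(v)\ne\b(v)$ and $\b(v)\ne\a(w)$; and (C)~$\a(v)\ne\b(v)$, $\b(v)=\a(w)$, $\a(v)\ne\b(w)$ --- I would, respectively, (A) delete $v$ without any recolouring; (B) recolour $v$ directly to $\b(v)$ in one step; or (C) swap $\a\leftrightarrow\b$ (legitimate since the distance in $\C_L(T)$ is symmetric) to fall into~(B). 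In each of these cases the edge $vw$ is not bidirected in $D_{\a,\b}$, so $\mu$ is preserved on $T-v$, and the inductive hypothesis on $T-v$, applied with $L(w)$ shrunk by the one forbidden colour (still leaving at least $d_{T-v}(w)+2$ colours in $L(w)$), closes the step.

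The essential case is~(D): the edge $vw$ is bidirected, i.e.\ it lies in the bidirected subforest $B:=\{uu'\in E(T) : \a(u)=\b(u'),\ \a(u')=\b(u)\}$. Since $v$ is a leaf of $T$ it is also a leaf of $B$, so either $\mu(B-v)=\mu(B)-1$ (Subcase~D1) or $\mu(B-v)=\mu(B)$ (Subcase~D2). In Subcase~D1 I would recolour $v$ to an intermediate $c_v\in L(v)\setminus\{\a(v),\a(w)\}$ (which exists because $|L(v)|\ge 3$), recurse on $T-v$ with $L(w)$ shrunk by $c_v$, and finish by recolouring $v$ to $\b(v)$; the count $1+((\mu(B)-1)+(n'-1))+1=n'+\mu(B)$ matches. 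In Subcase~D2 a short matching-augmentation argument, using that $v$'s only edge in $B$ is $vw$, forces $w$ to have a second bidirected neighbour and to be saturated in every maximum matching of $B$, whence $\mu(B-\{v,w\})=\mu(B)-1$.

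In Subcase~D2 the key move is to recolour $w$ (not $v$) first, to an intermediate $c_w\in L(w)\setminus(\{\a(u) : u\in N(w)\}\cup\{\b(u) : u\in N^+(w)\})$, where $N^+(w):=\{u\in N(w):\b(w)=\a(u)\}$: the first set enforces properness of the recolouring step, and the second guarantees that no bidirected edge at $w$ survives in the updated digraph $D_{\a',\b}$. Once $w$ sits at $c_w$, recolouring $v$ to $\b(v)$ becomes legal, and the recursion on $T-v$ (with $L(w)$ shrunk by $\b(v)$) closes the step. The hard part will be showing that such a $c_w$ exists under the tight hypothesis $|L(w)|\ge d(w)+2$: a direct count, exploiting that each bidirected edge $uw\in B$ yields the colour-collapses $\a(u)=\b(w)$ and $\b(u)=\a(w)$, shows that the forbidden set has size at most $2+d(w)-|N^B(w)|$, where $N^B(w)$ denotes the $B$-neighbours of $w$. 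Since Subcase~D2 guarantees $|N^B(w)|\ge 2$, this set has size at most $d(w)$, so $c_w$ exists; the recursion then gives $\mu(D_{\a',\b}|_{T-v})=\mu(B-\{v,w\})=\mu(B)-1$, and the total $1+1+((\mu(B)-1)+(n'-1))=n'+\mu(B)$ completes the induction.
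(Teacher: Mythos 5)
Your proof is correct, but it takes a genuinely different route from the paper's. The paper also inducts, but on a coarser trichotomy: if $\a(v)=\b(v)$ somewhere, delete $v$ and recurse on the components of $T-v$; else if some edge $vw$ has $\vc{vw}\notin A(D_{\a,\b})$, cut the tree at $vw$ and recolour the two subtrees one after the other (this uses that $\mu(D_{\a,\b})$ is additive over the two sides); otherwise \emph{every} edge of $T$ is bidirected, which forces $T$ to be $2$-coloured by both $\a$ and $\b$ with the classes swapped, and here the paper finishes in one global sweep: take a minimum vertex cover $S$ of $T$ (size $\mu(T)=\mu(D_{\a,\b})$ by K\H{o}nig), recolour each $v\in S$ to an auxiliary colour avoiding $\{1,2\}$ and the auxiliary colours of its $\le d(v)-1$ neighbours in $S$ (possible since $|L(v)|\ge d(v)+2$ and $S$ is minimal), then set $V\setminus S$ to $\b$, then set $S$ to $\b$, for $2|S|+(n-|S|)=n+\mu$ steps. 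Your version instead inducts purely at a leaf $v$; the routine cases A--C match the paper's easy reductions in spirit, but your Case D replaces the paper's global ``swap two colour classes'' argument with a local one, and your Subcase~D2 --- recolouring the \emph{parent} $w$ to a colour that simultaneously respects properness and kills every bidirected edge at $w$, with the counting bound $|F|\le 2+d(w)-|N^B(w)|\le d(w)$ using the colour-collapses along $B$-edges --- is exactly the delicate step that the paper's vertex-cover batch avoids. Both arguments are sound; the paper's is somewhat shorter in the hard case because K\H{o}nig's theorem absorbs the bookkeeping, while yours is more local and does not need to observe the two-colour structure of the fully bidirected case. One nit: you should state the base case $n(T)=1$ explicitly, since your case analysis presumes a leaf with a neighbour.
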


\begin{proof}
The lower bound holds by Observation~\ref{lower-bound:obs}.
Now we prove that this lower bound is also an upper bound.  
 
We use induction on $n(T)$.  The base case, $n(T)=1$, is trivial.  
Assume the theorem is true whenever $T$ has order at most $s-1$.
We will prove it for an arbitrary tree $T$ on $s$ vertices.
If $\a(v)=\b(v)$ for some $v \in V(T)$, then we use induction on the components
of $T-v$, with $\a(v)$ removed from the lists of the neighbours
of $v$.
    
Suppose instead there are neighbours $v$ and $w$ for which $\vc{vw} \not \in A(D_{\a, \b})$.
Now $T-vw$ has two components; so let $C_v$ and $C_w$ be the
components, respectively, containing $v$ and $w$.
By deleting $\a(w)$ from $L(v)$, we can first recolour $C_v$.
Afterwards, we delete $\b(v)$ from $L(w)$ and recolour $C_w$.
By using the induction hypothesis (twice), we get the desired upper bound, since
$\mu(D_{\a, \b})=\mu(D_{\a, \b}[C_v])+\mu(D_{\a, \b}[C_w]).$
    
In the remaining case, we have two colour
classes, say with colours $1$ and $2$, which need to be swapped. 
We pick a smallest vertex cover $S$, which has size $\tau(D_{\a,\b})=\mu(D_{\a,\b}).$
Iteratively, we recolour every vertex $v$ in $S$ with a colour different from
$1$, $2$, and all colours used to recolour neighbours of $v$ in $S$.
Note that $v$ has at most $\deg(v)-1$ neighbours in $S$, since otherwise $S$
would not be a minimal vertex cover.
Since $\lvert L(v) \rvert \ge \deg(v)+2$, we can recolour as desired.
Next, for each $w \in V(T)\setminus S$, we recolour $w$ with $\b(w).$
Finally, we recolour each $v \in S$ with $\b(v)$.
This proves the induction step, which finishes the proof.
%
\end{proof}

\begin{cor}\label{cor:diamle_n+mu}
    If $T$ is a tree and  $L$ is a list-assignment with $\lvert L(v) \rvert \ge \deg(v)+2$ for every $v \in V(T)$,
    then 
    $$\diam \C_L(T) \le n(T)+\mu(T)\le \lfloor 3n(T)/2\rfloor.$$
\end{cor}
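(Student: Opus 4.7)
The plan is to derive the corollary almost immediately from Theorem~\ref{thr:n+tau_forpath}. Fix arbitrary proper $L$-colourings $\alpha$ and $\beta$ of $T$; by that theorem, the distance in $\C_L(T)$ satisfies
\[
\dist(\alpha,\beta) \;=\; \mu(D_{\alpha,\beta}) + \sum_{v\in V(T)} \mathbf{1}_{\alpha(v)\neq\beta(v)}.
\]
The second summand is trivially at most $n(T)$, so the entire task reduces to showing $\mu(D_{\alpha,\beta}) \le \mu(T)$.

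To handle the first summand, I would unpack the definition: $\mu(D_{\alpha,\beta}) = \mu(H_{D_{\alpha,\beta}})$, where $H_{D_{\alpha,\beta}}$ is the undirected graph whose edges are exactly the bidirected pairs of $D_{\alpha,\beta}$. Any such bidirected pair $\{v,w\}$ satisfies $vw\in E(T)$ (with $\alpha(v)=\beta(w)$ and $\alpha(w)=\beta(v)$), so $E(H_{D_{\alpha,\beta}}) \subseteq E(T)$, and hence every matching of $H_{D_{\alpha,\beta}}$ is a matching of $T$. This gives $\mu(D_{\alpha,\beta}) \le \mu(T)$. Combining the two bounds yields $\dist(\alpha,\beta) \le n(T) + \mu(T)$, and since $\alpha,\beta$ were arbitrary, $\diam \C_L(T) \le n(T) + \mu(T)$.

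For the second inequality, I would just invoke the elementary fact that every matching in any graph saturates at most $2\mu$ vertices, so $\mu(T) \le \lfloor n(T)/2 \rfloor$, whence $n(T) + \mu(T) \le n(T) + \lfloor n(T)/2 \rfloor = \lfloor 3n(T)/2 \rfloor$. There is no real obstacle here — the corollary is a short bookkeeping consequence of Theorem~\ref{thr:n+tau_forpath}; the only point requiring a sentence of care is verifying that the bidirected-edge graph $H_{D_{\alpha,\beta}}$ embeds as a subgraph of $T$ on the same vertex set, so that its matching number is dominated by $\mu(T)$.
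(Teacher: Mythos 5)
Your proof is correct and takes the same route as the paper: invoke Theorem~\ref{thr:n+tau_forpath}, observe that the bidirected edges of $D_{\alpha,\beta}$ lie in $E(T)$ so $\mu(D_{\alpha,\beta})\le\mu(T)$, and finish with the trivial bound $\mu(T)\le\lfloor n(T)/2\rfloor$. The paper's version is just terser, leaving the subgraph observation implicit.
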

\begin{proof}
    Theorem~\ref{thr:n+tau_forpath} implies that $\diam~\C_L(T)\le
n(T)+\mu(T)$, and the bound $\mu(T) \le \lfloor \frac {n(T)}2 \rfloor$ holds trivially, since
each edge of a matching saturates two vertices.
\end{proof}

Recall that $[k]$ denotes $\{1,\ldots,k\}$.  Thus, we write
\emph{$[k]$-colouring} to mean a $k$-colouring from the colours $[k]$.

\begin{prop}\label{prop:diam&rad_ofCkT}
    For every tree $T$ and $k \ge \Delta(T)+2$,
    we have
    $$\diam \C_k(T) =n(T)+\mu(T) \mbox{ and } \rad \C_k(T) \ge n(T)+\left\lceil
\frac{\mu(T)}{2} \right\rceil.$$
\end{prop}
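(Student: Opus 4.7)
My plan is to prove the diameter equality and the radius bound separately. The upper bound $\diam \C_k(T) \le n(T)+\mu(T)$ follows immediately from Corollary~\ref{cor:diamle_n+mu}, applied to the constant list-assignment $L \equiv [k]$ (which satisfies $|L(v)|=k \ge \Delta(T)+2 \ge d(v)+2$ for every $v$). For the matching diameter lower bound, I would use that $T$ is bipartite: let $\a$ be a proper $2$-colouring of $T$ with colours $\{1,2\}$, and set $\b(v) := 3-\a(v)$. Then $\a,\b$ are proper $k$-colourings differing at every vertex, and every edge of $T$ is bidirected in the colour-shift digraph $D_{\a,\b}$, so $\mu(H_{D_{\a,\b}}) = \mu(T)$, and Observation~\ref{lower-bound:obs} yields $\dist(\a,\b) \ge n(T)+\mu(T)$.

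For the radius lower bound, it suffices to show that every proper $k$-colouring $\a$ has some $\b$ with $\dist(\a,\b) \ge n(T)+\lceil \mu(T)/2 \rceil$. I would fix a maximum matching $M$ of $T$ and form the contraction $T/M$, which is itself a tree (since contracting a matching in a tree yields a tree) and therefore bipartite. A $2$-colouring of $T/M$ partitions the vertices corresponding to edges of $M$ into two classes; I would let $S \subseteq M$ consist of those matching edges whose contracted vertex lies in the larger class, so that $|S| \ge \lceil |M|/2 \rceil = \lceil \mu(T)/2 \rceil$. Crucially, no edge of $T \setminus M$ can join endpoints of two distinct edges of $S$, since such an edge would correspond to a $T/M$-edge inside one colour class.

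I would then build $\b$ in two phases. First, for each $vw \in S$, set $\b(v) := \a(w)$ and $\b(w) := \a(v)$; these are distinct because $\a$ is proper. Second, for every $v \in V(T) \setminus V(S)$, I would pick $\b(v)$ greedily (in any order) so that $\b(v) \ne \a(v)$ and $\b(v) \ne \b(u)$ for each already-coloured neighbour $u$; at most $\deg(v)+1 \le \Delta(T)+1 < k$ values are forbidden, so some colour is always available. The only subtle properness check concerns edges with both endpoints in $V(S)$; but every such edge must be a matching edge of $S$ itself, because a non-matching edge between two distinct $S$-edges is ruled out by the independence of $S$ in $T/M$, and properness on edges of $S$ is automatic from the swap. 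By construction $\b$ differs from $\a$ at every vertex and every edge of $S$ is bidirected in $D_{\a,\b}$, so Observation~\ref{lower-bound:obs} gives $\dist(\a,\b) \ge n(T)+|S| \ge n(T)+\lceil \mu(T)/2 \rceil$.

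The main obstacle is the conflict that would arise from trying to swap along all of $M$ simultaneously: for two matching edges $vw$ and $xy$ joined by a non-matching edge $wx$, propriety of $\b$ at $wx$ would require $\a(v) \ne \a(y)$, which need not hold for arbitrary $\a$. Restricting the swap to a $T/M$-independent subset $S$ eliminates precisely this obstruction, at the price of a factor of two in the matching size; this is why the argument only delivers $\lceil \mu(T)/2 \rceil$ rather than the full $\mu(T)$, and whether the radius actually matches the diameter is left open by this approach.
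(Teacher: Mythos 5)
Your proposal is correct and follows essentially the same route as the paper: the diameter upper bound via Corollary~\ref{cor:diamle_n+mu}, the matching lower bound via the two $2$-colourings of $T$, and the radius bound by contracting a maximum matching, $2$-colouring the resulting tree, and swapping $\a$ along the induced matching given by the larger colour class. The only difference is cosmetic---you invoke Observation~\ref{lower-bound:obs} where the paper cites Theorem~\ref{thr:n+tau_forpath}, and you spell out the properness check on $\b$ a bit more fully---but the key ideas are identical.
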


\begin{proof}
    The inequality $\diam \C_k(T) \le n(T)+\mu(T)$ holds by
Corollary~\ref{cor:diamle_n+mu}; and this inequality actually holds with
equality, since the two $[2]$-colourings of $T$ are at distance exactly $n(T)+\mu(T)$.
That is, $\diam \C_k(T) = n(T)+\mu(T)$.
Now we consider $\rad \C_k(T)$.
    If we contract all edges of a maximum matching $M$ in $T$, the result is
also a tree, $T'$.
    When we 2-colour $T'$,
    one colour is used on at least half of the contracted edges. Denote the set
of these contracted edges by $E'$. Note that $E'$ is an induced matching in $T$.
    Fix an arbitrary proper $k$-colouring $\alpha$ of $T$.
    Now we construct a proper $k$-colouring $\beta$ of $T$ by swapping the
colours on the endpoints of each edge $vw \in E'$, i.e., let $\b(v):=\a(w)$ and
$\b(w):=\a(v)$ for every edge $vw \in E'.$
    For every vertex $v$ not belonging to an edge in $E'$, choose a colour in
$[k]$ different from the colours already assigned (in $\b$) to 
neighbours of $v$ and different from $\a(v).$
    Now $\dist(\a,\b) \ge n(T)+\lvert E' \rvert \ge n(T)+\left\lceil
\frac{\mu(T)}{2} \right\rceil $ by Theorem~\ref{thr:n+tau_forpath}.
\end{proof}


We construct trees $T$ for which certain colourings are more ``central'' in the
reconfiguration graph than others.
That is, we construct trees $T$ for which $\rad\C_k(T) \not= \diam\C_k(T)$.
We also study the maximum possible diameter and minimum possible radius of
reconfiguration graphs of trees of given order, and the maximum possible
difference of these quantities.

\begin{prop}
    For every $k\ge 4$, the path $P_n$ satisfies $$\diam \C_k(P_n) = \left\lfloor \frac{3n}{2} \right\rfloor \mbox{ and } \rad \C_k(P_n) = \left\lceil \frac{4n-1}{3} \right\rceil.$$
    Furthermore, there exist $n$-vertex trees $T$, with maximum degree 3, such that
for every $k\ge 5$ we have $$\diam \C_k(T) = \left\lfloor \frac{3n}{2}
\right\rfloor \mbox{ and } \rad \C_k(T) = \left\lceil \frac{5n-1}{4} \right\rceil.$$
All such $T$ maximise, over all $n$-vertex trees, the difference $\diam
\C_k(T)-\rad\C_k(T)$.
\end{prop}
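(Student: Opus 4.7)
The plan is to split by handling $P_n$ first and then the $\Delta=3$ tree, using Theorem~\ref{thr:n+tau_forpath} to express each distance in a tree as $\mu(D_{\alpha,\beta})+|\{v:\alpha(v)\ne\beta(v)\}|$. For $P_n$, the diameter equation follows from Proposition~\ref{prop:diam&rad_ofCkT} and $\mu(P_n)=\lfloor n/2\rfloor$. For the radius, I would analyse the $3$-periodic colouring $\alpha(v_i)\equiv i\pmod 3$: if some swap matching contained adjacent edges $v_iv_{i+1}$ and $v_{i+2}v_{i+3}$, then the intermediate vertex would force $\beta(v_{i+1})=\alpha(v_i)$ and $\beta(v_{i+2})=\alpha(v_{i+3})$ to be distinct, yet these are equal in the $3$-periodic $\alpha$. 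Hence every valid swap matching for this $\alpha$ has consecutive matched edges at gap at least $3$, giving at most $\lceil(n-1)/3\rceil$ swaps and eccentricity at most $n+\lceil(n-1)/3\rceil=\lceil(4n-1)/3\rceil$. For the matching lower bound, for every $\alpha$ the fixed gap-$3$ matching $\{v_1v_2,v_4v_5,v_7v_8,\ldots\}$ is realisable as a swap matching when $k\ge 4$: on each unmatched vertex only two determined neighbour values and $\alpha(v)$ are forbidden, so a valid $\beta(v)$ exists, yielding distance $\ge\lceil(4n-1)/3\rceil$.

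For the $\Delta=3$ tree $T$, I would construct a caterpillar-like family depending on $n$: a spine together with pendants placed so that $\mu(T)=\lfloor n/2\rfloor$ (ensuring $\diam\C_k(T)=\lfloor 3n/2\rfloor$ via Proposition~\ref{prop:diam&rad_ofCkT}) and so that the longest induced matching of $T$ has size exactly $\lceil(n-1)/4\rceil$. For instance, for $n=6,7,8$ one may take, respectively, the full comb on spine of length three, the comb with one endpoint-pendant removed, and a spine of length six with pendants at the two middle spine vertices; a uniform description may be given case-by-case on $n\bmod 4$. For the radius upper bound I would select a colouring $\alpha^*$ combining a carefully chosen spine pattern with pendants coloured to force adjacent-pair clashes under swaps, and a case analysis on the structurally distinct swap matchings of size exceeding $\lceil(n-1)/4\rceil$ shows that each one collides at some edge of $T$, giving eccentricity at most $\lceil(5n-1)/4\rceil$. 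For the matching lower bound, the induced matching of size $\lceil(n-1)/4\rceil$ in $T$ realises as a valid swap matching whenever $k\ge 5$, because each unmatched vertex has at most three neighbours contributing determined $\beta$-values, giving at most four constraints on $\beta(v)$.

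The maximality statement asserts that for every $n$-vertex tree $T'$, $\diam\C_k(T')-\rad\C_k(T')\le\lfloor n/2\rfloor-\lceil(n-1)/4\rceil$. Since $\diam=n+\mu(T')$ by Proposition~\ref{prop:diam&rad_ofCkT}, the bound is immediate when $\mu(T')\le\lfloor n/2\rfloor-\lceil(n-1)/4\rceil$ using the trivial lower bound $\rad\ge n$. When $\mu(T')$ is larger, $T'$ contains a near-perfect matching, and one shows that any near-perfect matching of a tree contains an induced sub-matching of size at least $\mu(T')-(\lfloor n/2\rfloor-\lceil(n-1)/4\rceil)$ (obtained by iteratively extracting a leaf edge and deleting the up-to-four vertices within distance two of it); the swap argument above then yields the required lower bound on $\rad$, and equality characterises the constructed trees. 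The main obstacle will be the explicit tree construction together with the case-by-case verification that $\alpha^*$ blocks every over-sized swap matching, which depends on $n\bmod 4$; the induced-matching lower bound is conceptually cleaner, but still requires processing high-degree vertices last in order to guarantee a free colour at each unmatched vertex under the constraint $k\ge 5$.
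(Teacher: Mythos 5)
Your treatment of $P_n$ is correct and takes a mildly different route for the radius lower bound than the paper: you use the fixed gap-$3$ matching $\{v_1v_2,v_4v_5,\ldots\}$ and observe that each inner unmatched vertex has only three forbidden colours, whereas the paper builds the matching adaptively (selecting the next edge at gap $2$ or $3$ depending on colours) precisely so that when two selected edges are at gap $2$ the adjacent matched vertices do not receive the same colour. Both arguments yield $\lceil(n-1)/3\rceil$ edges for $k\ge 4$, and your version is arguably cleaner.

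The tree half of your proposal, however, has real gaps. First, the tree construction is not pinned down: the paper uses the explicit comb $T_n$ (spine $v_1\cdots v_{\lceil n/2\rceil}$ with one pendant on each spine vertex, the last pendant omitted when $n$ is odd), and your example for $n=8$ (a path of length five with pendants at the two middle vertices) is a different graph, so ``a uniform description may be given case-by-case on $n\bmod 4$'' is a plan rather than a construction. Second, and most seriously, the heart of the proof is the radius \emph{upper} bound: one must exhibit a single colouring $\alpha$ of $T$ such that $\mu(D_{\alpha,\beta})\le\lceil(n-1)/4\rceil$ for \emph{every} proper $\beta$. You say ``a case analysis on the structurally distinct swap matchings of size exceeding $\lceil(n-1)/4\rceil$ shows that each one collides at some edge,'' but you give neither the colouring nor the analysis. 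The paper's argument here is genuinely nontrivial: it uses a period-$4$ colouring in which, for each odd $i$, the three vertices $w_i$, $w_{i+1}$, $v_{i+2}$ all receive the same colour, so that within the window $\{v_i,v_{i+1},v_{i+2},w_i,w_{i+1}\}$ any two disjoint bidirected edges would force $\beta(v_i)=\beta(v_{i+1})$ or a similar clash. Without that specific construction and verification you do not have a proof of the eccentricity upper bound.

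Finally, the maximality claim is over-engineered in your proposal. You do not need to argue about induced sub-matchings of near-perfect matchings, or to split into cases depending on the size of $\mu(T')$: Proposition~\ref{prop:diam&rad_ofCkT} already gives both $\diam\C_k(T')=n+\mu(T')$ and $\rad\C_k(T')\ge n+\lceil\mu(T')/2\rceil$ for any tree $T'$ with $k\ge\Delta(T')+2$, so immediately $\diam\C_k(T')-\rad\C_k(T')\le\lfloor\mu(T')/2\rfloor\le\lfloor n/4\rfloor$, which matches the difference achieved by the comb. Your suggested iterative extraction argument (``deleting the up-to-four vertices within distance two of a leaf edge'') is neither proven nor needed; worse, it conflates the radius lower bound, which you essentially already have for free from Proposition~\ref{prop:diam&rad_ofCkT}, with a new and unjustified structural claim about trees.
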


\begin{proof}
    Consider $P_n$ with vertex set $\{v_1, v_2, \ldots, v_n\}$. Fix $k \ge 4$, and
    let $\a$ and $\b$ be the colourings with ranges $[3]$ and $[2]$, respectively, such that 
    $$\a(v_i) \equiv i \Mod{3} \mbox{~~~ and~~~ } \b(v_i) \equiv i \Mod{2}~~~~~\forall i \in [n].$$
    
Note that $\ecc(\b)=\lfloor \frac{3n}{2} \rfloor$ by
Theorem~\ref{thr:n+tau_forpath}, since switching colours $1$ and
$2$ in $\b$ requires $\left\lfloor \frac{3n}{2} \right\rfloor$ recolouring steps.
Furthermore, Corollary~\ref{cor:diamle_n+mu} implies that $\diam
\C_k(P_n)=\ecc(\b)=\lfloor \frac{3n}2\rfloor$.
On the other hand, for every proper colouring $\gamma$, we know that $D_{\a,
\gamma}$ cannot have a pair of bidirected edges of the form $v_iv_{i+1}$ and
$v_{i+2}v_{i+3},$ since $\a(v_i)=\a(v_{i+3}).$
So $\mu(D_{\a, \gamma})\le \lceil \frac{n-1}{3} \rceil$; hence, $\ecc(\a) \le
n+\lceil \frac{n-1}{3} \rceil$, which implies that $\rad \C_k(P_n)\le\lceil\frac{4n-1}3\rceil$.

Next we prove the lower bound on $\rad\C_k(P_n)$. 
For every colouring $\a$ of $P_n$, we can choose at least
$\lceil \frac{n-1}{3} \rceil$ disjoint edges such that swapping the colours in $\a$ on the
endpoints of each edge (and possibly recolouring vertices not in any of these
edges) yields another proper colouring.
To see this, first select edge $v_1v_2.$
Whenever $v_iv_{i+1}$ has been selected, there exists $j \in \{i+3, i+4\}$
for which $\a(v_i) \not=\a(v_j)$. Now add edge $v_{j-1}v_j$ to the set of selected edges.
When our selection ends, the set $E'$ contains at least $\lceil \frac{n-1}{3} \rceil$ edges.
We can now construct a $[k]$-colouring $\b$ where $\b(w)=\a(v)$ and
$\b(v)=\a(w)$ for every edge $vw \in E'$, and $\b(v) \ne \a(v)$ for every $v \in
V(P_n).$
Theorem~\ref{thr:n+tau_forpath} gives $\dist(\a, \b) \ge n+\lceil \frac{n-1}{3} \rceil$.
 
Now let $T_n$ be the $n$-vertex comb graph; see Figure~\ref{comb-fig}.
Here $T_n$ is formed from a path $v_1v_2\ldots v_t$, where $t=\lceil
\frac n2 \rceil$, by adding, for each $v_i$ (except $v_t$ when $n$ is
odd), an additional neighbour $w_i$.
Since $\mu(T_n)=\left\lfloor\frac{n}{2}\right\rfloor$, 
by Proposition~\ref{prop:diam&rad_ofCkT}
the diameter of $\C_k(T_n)$ is $\lfloor \frac{3n}2\rfloor$.
    Let $\a$ be the colouring shown and described in Figure~\ref{comb-fig}.

\begin{figure}[!h]
\centering
\begin{tikzpicture}[thick, scale=1]
\tikzstyle{uStyle}=[shape = circle, minimum size = 4.5pt, inner sep = 0pt,
outer sep = 0pt, draw, fill=white, semithick]
\tikzstyle{sStyle}=[shape = rectangle, minimum size = 4.5pt, inner sep = 0pt,
outer sep = 0pt, draw, fill=white, semithick]
\tikzstyle{lStyle}=[shape = circle, minimum size = 4.5pt, inner sep = 0pt,
outer sep = 0pt, draw=none, fill=none]
\tikzset{every node/.style=uStyle}
\def\off{.45}
\def\fillwidth{2.3mm}
\def\fillcolor{black!25!white}

\draw[white] (0,2.1) -- (1,2.1); 

\draw[draw=none, fill=\fillcolor] 
(2,1) circle (\fillwidth)
(2,0) circle (\fillwidth)
(3,1) circle (\fillwidth)
(4,0) circle (\fillwidth)
;
\draw[color=\fillcolor, line width=2*\fillwidth+.1] (2,1) -- (2,0) (2,0) --
(4,0) (3,1) -- (3,0);

\draw (0,0) -- (7,0);
\foreach \i in {0,...,7}
{
\draw (\i,1) node (w\i) {} -- (\i,0) node (v\i) {};
}
\foreach \i/\wlab/\vlab in {0/1/2, 1/1/3, 2/2/1, 3/2/3, 4/1/2, 5/1/3, 6/2/1, 7/2/3}
{
\draw (w\i) ++ (0,\off) node[lStyle] {\footnotesize{\wlab}};
\draw (v\i) ++ (0,-\off) node[lStyle] {\footnotesize{\vlab}};
}

\draw (3,-2.25) node[lStyle, shape=rectangle] {
$\a(v_i)=\left.
  \begin{cases}
    1, & \text{for } i \equiv 3 \Mod 4\\
    2, & \text{for } i \equiv 1 \Mod 4\\
    3, & \text{for } i \equiv 0,2 \Mod 4
  \end{cases}\right.
  \mbox{~~~~} 
\a(w_i)=\left.  
\begin{cases}
    1, & \text{for } i \equiv 1,2 \Mod 4\\
    2, & \text{for } i \equiv 3,4 \Mod 4.
  \end{cases}\right. 
$
};
\end{tikzpicture}

\caption{The comb graph $T_{16}$ with colouring $\a$.  The shaded region
denotes four edges no two of which, for any colouring $\b$, yield
disjoint bidirected edges of $D_{\a,\b}$.\label{comb-fig}}
\end{figure}
For any proper colouring $\b$ of $T_n$, note that 
$\mu(D_{\a,\b}) \le \left\lceil \frac{n-1}{4} \right\rceil$.
This holds because, for each odd $i$, the subgraph
$D_{\a,\b}[\{v_i,v_{i+1},v_{i+2},w_i,w_{i+1}\}]$ contains 
no two disjoint bidirected edges.
Thus, $\ecc(\a) \le \ceil{\frac{5n-1}{4}}$, by
Theorem~\ref{thr:n+tau_forpath}.  Since $\mu(T_n)= \floor{\frac{n}{2}}$, 
Proposition~\ref{prop:diam&rad_ofCkT} implies that
$\rad \C_k(T_n) = n+\ceil{\floor{n/2}/2} = \ceil{
\frac{5n-1}{4} }$.  Thus,
$\diam\C_k(T_n)-\rad\C_k(T_n)=\floor{\frac{3n}2}-
\ceil{\frac{5n-1}4}=\floor{\frac{n}4}$.
For every $n$-vertex tree $T$, 
Proposition~\ref{prop:diam&rad_ofCkT} implies that $\diam\C_k(T)-\rad\C_k(T) \le
(n+\mu(T))-(n+\ceil{\frac{\mu(T)}2})=\floor{\frac{\mu(T)}2}\le\floor{\frac{n}4}$.
Thus, $T_n$ maximises this difference.
\end{proof}

For list colourings, the difference $\diam\C_L(T)-\rad\C_L(T)$ can
be even larger.
\begin{prop}
\label{tree-lower-bound}
    For every tree $T$, there exists a list-assignment $L$,
with $\lvert L(v) \rvert \ge \deg(v)+2$ for all $v \in V(T)$, such that
$\diam \C_L(G)=n(T)+\mu(T)$ and $\rad \C_L(G)=n(T)$.
These values are, respectively, the maximum and minimum possible over all such
list-assignments $L$.
\end{prop}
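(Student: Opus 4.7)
The plan is to construct a single list-assignment $L$ that simultaneously realises the diameter extreme $n(T)+\mu(T)$ and the radius extreme $n(T)$, and to verify that these are indeed the correct extremal values over all list-assignments with $|L(v)|\ge d(v)+2$.

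First I would dispatch the two universal bounds. The inequality $\diam\C_L(T)\le n(T)+\mu(T)$ is immediate from Corollary~\ref{cor:diamle_n+mu}. For $\rad\C_L(T)\ge n(T)$, given any proper $L$-colouring $\gamma$, process $V(T)$ in any order and greedily choose $\delta(v)\in L(v)$ avoiding $\gamma(v)$ and the $\delta$-colours already assigned to neighbours of $v$ (at most $d(v)+1$ forbidden colours, versus $|L(v)|\ge d(v)+2$); this yields a proper $L$-colouring $\delta$ with $\delta(v)\ne\gamma(v)$ at every vertex, so $\dist(\gamma,\delta)\ge n(T)$ by Observation~\ref{lower-bound:obs} and $\ecc(\gamma)\ge n(T)$.

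To construct the witness $L$, fix a maximum matching $M$ of $T$. For each vertex $v$, introduce a private colour $p_v$ that appears only in $L(v)$. For each edge $vw\in M$, introduce two shared colours $a_{vw}, b_{vw}$ that appear only in $L(v)$ and $L(w)$. Finally pad each $L(v)$ with additional private colours until $|L(v)|=d(v)+2$. The central colouring $\alpha$ defined by $\alpha(v):=p_v$ is proper since every $p_v$ is unique to $v$. For any proper $L$-colouring $\delta$, the colour-shift digraph $D_{\alpha,\delta}$ has no arc at all: an arc $\vc{vw}$ would require $\delta(v)=\alpha(w)=p_w$, but $p_w\notin L(v)$. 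Theorem~\ref{thr:n+tau_forpath} then gives $\dist(\alpha,\delta)=\mu(D_{\alpha,\delta})+|\{v:\alpha(v)\ne\delta(v)\}|\le n(T)$, so $\ecc(\alpha)\le n(T)$ and hence $\rad\C_L(T)=n(T)$.

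For the diameter, define $\beta_1,\beta_2$ by setting $\beta_1(v):=a_{vw}$, $\beta_1(w):=b_{vw}$, $\beta_2(v):=b_{vw}$, $\beta_2(w):=a_{vw}$ for each $vw\in M$, and for each $u$ unsaturated by $M$ choosing two distinct private colours of $L(u)$ to serve as $\beta_1(u)$ and $\beta_2(u)$. Properness of $\beta_1$ and $\beta_2$ follows because the only colours shared between lists of adjacent vertices belong to edges of $M$ and are used consistently across each matching edge. The digraph $D_{\beta_1,\beta_2}$ carries two opposite arcs on each edge of $M$, so $\mu(D_{\beta_1,\beta_2})\ge\mu(T)$; since $\beta_1$ and $\beta_2$ differ on every vertex, Observation~\ref{lower-bound:obs} yields $\dist(\beta_1,\beta_2)\ge n(T)+\mu(T)$, matching the universal upper bound. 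The main thing to check throughout is that the private/shared colour scheme really does prevent spurious arcs in $D_{\alpha,\delta}$ and spurious conflicts in $\beta_1$ or $\beta_2$; both are immediate from the disjointness of the colour pools associated with distinct vertices and distinct matching edges, so no serious obstacle is expected.
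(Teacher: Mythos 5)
Your proposal is correct and takes essentially the same route as the paper's proof: both fix a maximum matching, arrange the lists so that adjacent vertices have disjoint lists except for a size-two intersection on matching edges, exhibit a "swap" pair of colourings for the diameter via Observation~\ref{lower-bound:obs}, construct a colouring whose colours avoid all neighbouring lists to certify the radius upper bound, and use the greedy "avoid $\gamma(v)$ and already-assigned neighbours" argument for the universal radius lower bound. Your explicit private/shared-colour bookkeeping is just a concrete instantiation of the paper's intersection conditions $|L(v)\cap L(w)|\in\{0,2\}$, and your use of Theorem~\ref{thr:n+tau_forpath} to bound $\dist(\alpha,\delta)$ is interchangeable with the paper's direct greedy-recolouring argument.
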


\begin{proof}
    Choose a maximum matching $M$ of $T$ and fix $L$ such that 
$$\lvert L(v) \cap L(w) \rvert =\left.
  \begin{cases}
    0, & \text{when } vw \not \in M\\
    2, & \text{when } vw \in M.
  \end{cases}\right.$$
  Let $\a$ and $\b$ be $L$-colourings such that, for every edge $vw \in M$, we
have $\a(v)=\b(w)$ and $\a(w)=\b(v)$.
  For every vertex $v$ not saturated by $M$, we pick $\a(v)$ and $\b(v)$ to be
arbitrary distinct colours in $L(v)$.
  By Theorem~\ref{thr:n+tau_forpath}, we have $\dist(\a,\b)=n(T)+\mu(T)$. 
Corollary~\ref{cor:diamle_n+mu} implies that $\diam \C_L(T)=n(T)+\mu(T)$.
Further, this diameter is the maximum possible over all such list-assignments
$L$.

To show that $\rad \C_L(T)\le n(T)$, we construct an $L$-colouring $\c$
such that $\dist(\c,\d)\le n(T)$ for every $L$-colouring $\d$.
Since $|L(v)\setminus \left(\cup_{w\in N(v)}L(w)\right)|\ge 3-2=1$ for every
vertex $v$, we can form an
$L$-colouring $\c$ such that every $v\in V(T)$ satisfies $\c(v) \notin \cup_{w\in
N(v)}L(w)$.  
For every proper colouring $\d$, by our construction of $\c$, 
we can recolour $\d$ into $\c$ greedily; hence, $\dist(\d,\c) \le n(T)$.

Now we show that $\rad C_L(T)\ge n(T)$.
For every $L$-colouring $\c$, there exists an $L$-colouring $\d$ such that
$\d(v)\ne \c(v)$ for all $v$.  To see this, let $L'(v):=L(v)\setminus \c(v)$.
Since $|L'(v)|\ge \deg(v)+1$ for all $v$, we form $\d$ by colouring $G$ greedily
(in any order) from $L'$.  Clearly, $\dist(\c,\d)\ge n(G)$.
Thus, $\rad \C_L(T)=n(T)$.  In fact, this lower bound does not depend on the
specific choice of $L$, but only uses that $|L(v)|\ge \deg(v)+2$ for all $v$.  
Thus, this radius is the minimum over all such list-assignments $L$.
\end{proof}

%
%
\section{Proving the List Conjecture 
for Complete Bipartite Graphs and Cactuses}
\label{classes-list:sec}

A \emph{cactus} is a connected graph in which each edge lies on at most one cycle.
In this section, we prove the \hyperref[conj:main_list]{List Conjecture} 
for all complete bipartite graphs and for all cactuses.  Both proofs are by
induction, and rely on the following helpful lemma.

\begin{lem}
\label{lem:StructureMinimalCounterexample}
Fix a positive integer $b$.
Let $G=(V,E)$ be a graph for which there exists a
list-assignment $L$ and two proper $L$-colourings $\a, \b$
such that $\lvert L(v) \rvert \ge \deg(v)+b$ for every $v \in V(G)$, and
$\dist_L(\a,\b)>n(G)+\mu(G)$, but, for every proper
induced subgraph of $G$, no such list-assignment exists.
Now for every partition $V(G)=V_1 \cup V_2$ into two non-empty subsets of
vertices, $D_{\a,\b}$ has at least one arc from $V_1$ to $V_2$ and at least one
arc from $V_2$ to $V_1,$ i.e., $D_{\a,\b}$ is strongly connected.
\end{lem}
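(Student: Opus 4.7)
The plan is to argue by contradiction: assume that for some partition $V(G)=V_1\cup V_2$ into nonempty parts, $D_{\a,\b}$ has no arc (say) from $V_1$ to $V_2$, and show that the minimality of $G$ would then force $\dist_L(\a,\b)\le n(G)+\mu(G)$, contradicting the hypothesis. The intuition is that if no arc points from $V_1$ to $V_2$, then the $\beta$-colours inside $V_1$ do not clash with the $\alpha$-colours of their $V_2$-neighbours, so we can safely recolour $V_1$ first (with $V_2$ still at $\a$) and then recolour $V_2$ (with $V_1$ now at $\b$).

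To make this precise, I would define two restricted list-assignments. On $G[V_1]$, set
$$L_1(v):=L(v)\setminus\{\a(w): w\in N_G(v)\cap V_2\}.$$
Since at most $|N_G(v)\cap V_2|$ colours are removed, $|L_1(v)|\ge \deg_{G[V_1]}(v)+b$. Clearly $\a|_{V_1}$ is a proper $L_1$-colouring, and the assumption ``no arc from $V_1$ to $V_2$'' is exactly the statement that $\b(v)\ne \a(w)$ for every edge $vw$ with $v\in V_1,w\in V_2$, so $\b|_{V_1}$ also lies in $L_1$. Symmetrically, on $G[V_2]$ set
$$L_2(w):=L(w)\setminus\{\b(v): v\in N_G(w)\cap V_1\},$$
which again satisfies $|L_2(w)|\ge \deg_{G[V_2]}(w)+b$; here $\b|_{V_2}$ is automatically a proper $L_2$-colouring (since $\b$ is proper), and $\a|_{V_2}$ is a proper $L_2$-colouring precisely because no arc goes from $V_1$ to $V_2$ (so $\a(w)\ne \b(v)$ for all such edges).

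Because both $V_1$ and $V_2$ are nonempty, $G[V_1]$ and $G[V_2]$ are proper induced subgraphs of $G$, so the minimality hypothesis applies to each. Thus I can recolour $G[V_1]$ from $\a|_{V_1}$ to $\b|_{V_1}$ within $L_1$ in at most $n(G[V_1])+\mu(G[V_1])$ steps, and then recolour $G[V_2]$ from $\a|_{V_2}$ to $\b|_{V_2}$ within $L_2$ in at most $n(G[V_2])+\mu(G[V_2])$ steps. Each intermediate colouring is proper in $G$: during the first phase, the restriction to $L_1$ guarantees that no vertex of $V_1$ takes a colour currently used by its $V_2$-neighbours (which still carry their $\a$-colours), and during the second phase the restriction to $L_2$ plays the analogous role with $V_1$ now carrying its $\b$-colours. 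The total number of recolourings is at most
$$n(G[V_1])+n(G[V_2])+\mu(G[V_1])+\mu(G[V_2])\le n(G)+\mu(G),$$
where the last inequality follows because the union of a maximum matching in $G[V_1]$ with one in $G[V_2]$ is a matching in $G$. This contradicts $\dist_L(\a,\b)>n(G)+\mu(G)$. The symmetric argument (swapping the roles of $V_1$ and $V_2$) rules out the absence of arcs from $V_2$ to $V_1$, completing the proof that $D_{\a,\b}$ is strongly connected.

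The only subtle point, and the one I would take care to verify, is that the ``no forward arc'' hypothesis is genuinely what is needed to place \emph{both} $\a|_{V_i}$ and $\b|_{V_i}$ inside the restricted lists; once that bookkeeping is done, the argument reduces to a straightforward two-phase reconfiguration plus the additivity $\mu(G[V_1])+\mu(G[V_2])\le \mu(G)$, so no significant obstacle is expected.
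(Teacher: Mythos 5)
Your proposal is correct and follows essentially the same route as the paper: in both, one assumes no arc from $V_1$ to $V_2$, restricts the lists on $G[V_1]$ by removing the $\a$-colours of $V_2$-neighbours and on $G[V_2]$ by removing the $\b$-colours of $V_1$-neighbours, applies the minimality hypothesis to each induced subgraph, and combines via the additivity $\mu(G[V_1])+\mu(G[V_2])\le\mu(G)$. The paper phrases the two-phase recolouring via an explicit intermediate colouring $\gamma$ and the triangle inequality, but this is the same argument you give.
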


\begin{proof}
Assume the lemma is false; by symmetry, assume
$D_{\a,\b}$ has no arcs from $V_1$ to $V_2$.
Let $G_1:=G[V_1]$ and $G_2:=G[V_2]$.
Let $\c$ be the $L$-colouring where $\c(v):=\b(v)$ when $v \in V_1$ and
$\c(v):=\a(v)$ when $v \in V_2$.
    
For every $v \in V_1$, let $L'(v):=L(v) \backslash \{\a(w) \mid w \in N(v) \cap V_2\}$.
Note that $\lvert L'(v) \rvert \ge \deg_{G_1}(v)+b$. Also note that still $\c(v) \in L'(v)$ for every $v\in V_1$; this is where we use that there is no arc from $V_1$ to $V_2$.
Since $G_1$ is a proper induced subgraph of $G$, by hypothesis
$\diam \C_{L'}(G_1) \le n(G_1)+\mu(G_1)$;  
thus $\dist_L(\a,\c) \le n(G_1)+\mu(G_1)$.
Now for every $v \in V_2$, let $L'(v):=L(v) \backslash \{\b(w) \mid w \in N(v) \cap V_1\}$.
Similarly to before, $\lvert L'(v) \rvert \ge \deg_{G_2}(v)+b$, so $\diam
\C_{L'}(G_2) \le n(G_2)+\mu(G_2)$; thus $\dist_L(\c,\b) \le n(G_2)+\mu(G_2)$.
By the bounds above and the triangle inequality, we have
    \begin{align*}
        \dist_L(\a,\b) &\le \dist_L(\a,\c)+\dist_L(\c,\b) \\
        &\le n(G_1)+\mu(G_1)+n(G_2)+\mu(G_2)\\
        &\le n(G)+\mu(G).
    \end{align*}
The last step uses that the union of a matching in $G_1$ and a matching
in $G_2$ is a matching in $G$. 
So $G$ is not a counterexample, and the lemma is true.
\end{proof}

It is helpful to note that an analogous statement holds for correspondence
colouring.  In fact, its proof is nearly identical to that given above.
We use this observation in our proof of Theorem~\ref{thr:subcubic_cover}.

Using Lemma~\ref{lem:StructureMinimalCounterexample},
we prove that the 
\hyperref[conj:main_list]{List Conjecture} 
holds for all complete bipartite graphs.

\begin{theorem}
\label{Kpq:thm}
The 
\hyperref[conj:main_list]{List Conjecture} 
is true for all complete bipartite graphs. 
\end{theorem}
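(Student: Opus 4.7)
The plan is to induct on $n(G)=p+q$, writing $A,B$ for the parts of $G$ with $|A|=p\le q=|B|$, so that $\mu(G)=p$ and the target bound is $2p+q$. The base case $p=0$ is immediate. For the inductive step, I run through several easy reductions in turn. If $\alpha(v)=\beta(v)$ for some $v$, delete $v$ and $\alpha(v)$ from its neighbours' lists, and induct on the smaller complete bipartite graph. If some $v\in A$ has $\beta(v)\notin \alpha(B)$, recolour $v\mapsto \beta(v)$ in one step and induct on $K_{p-1,q}$ (with $\beta(v)$ deleted from the lists of $B$). A symmetric reduction handles $w\in B$ with $\beta(w)\notin \alpha(A)$. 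Swapping the roles of $\alpha$ and $\beta$ yields two further reductions: if some $v\in A$ (resp.\ $w\in B$) satisfies $\alpha(v)\notin \beta(B)$ (resp.\ $\alpha(w)\notin \beta(A)$), then I keep that vertex at its $\alpha$-colour while inducting on $G$ minus the vertex, and perform its move to the $\beta$-colour at the very end. In each case a routine accounting of list sizes, combined with the inductive bound, yields total cost at most $2p+q$.

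If all of these reductions fail, then $\beta(A)\subseteq \alpha(B)$, $\beta(B)\subseteq \alpha(A)$, $\alpha(A)\subseteq \beta(B)$, and $\alpha(B)\subseteq \beta(A)$, so $\alpha(A)=\beta(B)=:X$ and $\alpha(B)=\beta(A)=:Y$, with $X\cap Y=\emptyset$ and $|X|,|Y|\le p$. In this \emph{palette-swap} case my main manoeuvre is to park a vertex. Pick any $v^*\in A$ together with an intermediate colour $\gamma\in L(v^*)\setminus (X\cup Y)$; since $|L(v^*)|\ge q+2$ and $|X\cup Y|=|X|+|Y|$, such $\gamma$ exists whenever $|X|+|Y|\le q+1$. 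After $v^*\mapsto \gamma$, I delete $v^*$ and remove $\gamma$ from the $B$-lists, apply induction on $K_{p-1,q}$ (whose list sizes still satisfy $|L'(\cdot)|\ge d(\cdot)+2$), and finish with $v^*\mapsto \beta(v^*)$; the total is $2+(p-1+q)+(p-1)=2p+q$.

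The main obstacle is the residual subcase $|X|+|Y|\ge q+2$, which combined with $|X|,|Y|\le p$ forces $|X|=|Y|=p$ and $p\le q\le 2p-2$ (so $p\ge 2$). Here $\alpha|_A$ and $\beta|_A$ are bijections onto $X$ and $Y$ respectively, and my plan is a three-move exchange around a pair $(v^*,w^*)\in A\times B$ forming a \emph{swap pair}, meaning $\alpha(v^*)=\beta(w^*)$ and $\alpha(w^*)=\beta(v^*)$: move $v^*\to \gamma$, $w^*\to \beta(w^*)$, $v^*\to \beta(v^*)$, and then induct on $K_{p-1,q-1}$ for total $3+(p+q-2)+(p-1)=2p+q$. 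The hard part will be that swap pairs need not exist (the prototypical obstruction is $K_{3,3}$ with $D_{\alpha,\beta}$ a single directed $6$-cycle through $A\cup B$); in that case I will rotate colours along a longer alternating cycle of $D_{\alpha,\beta}$, successively parking and resolving vertices around the cycle while carefully charging the extra recolourings to the matching underlying $\mu(G)=p$. Making this bookkeeping precise---and verifying at every step that the required intermediate colour lies in the relevant list---is the technical crux of the proof.
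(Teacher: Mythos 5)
Your easy reductions (deleting a vertex with $\alpha(v)=\beta(v)$; peeling off a vertex whose $\beta$-colour is absent from its neighbours' $\alpha$-colours, or vice versa) and your ``parking'' step for the palette-swap case with $|X|+|Y|\le q+1$ are all correct, and the list-size accounting checks out. This much is a finer-grained inductive restatement of what the paper achieves via Lemma~\ref{lem:StructureMinimalCounterexample} (strong connectivity of $D_{\alpha,\beta}$ gives $\alpha(U)=\beta(W)$, $\alpha(W)=\beta(U)$) followed by its Case~1 (when $q\ge|X|+|Y|-1$, park all of $A$ at once, recolour $B$, then finish $A$).

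The residual subcase $|X|+|Y|\ge q+2$ is where your proof has a genuine gap, and in fact two problems. First, the claim that $|X|+|Y|\ge q+2$ together with $|X|,|Y|\le p$ ``forces $|X|=|Y|=p$'' is false: with $p=q=5$, $|X|=3$, $|Y|=4$ satisfies all the constraints but $|X|\ne p$. So $\alpha|_A$ and $\beta|_A$ need not be bijections, and the structure you go on to exploit is not guaranteed. Second, and more fundamentally, the ``swap-pair / rotate along alternating cycles'' plan is explicitly left unfinished --- you correctly observe that swap pairs can fail to exist (e.g.\ a directed $6$-cycle in $D_{\alpha,\beta}$ on $K_{3,3}$) and that the cycle-rotation bookkeeping is the technical crux, but you do not carry it out. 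As written, the proposal does not establish the theorem.

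For comparison, the paper dispatches exactly this subcase ($q\le|X|+|Y|-2$) without any cycle chasing. Since $2q<2|X|+2|Y|$, after possibly swapping $\alpha$ and $\beta$ one may assume $q\le 2|Y|-1$; then, because every vertex of $B$ has $\alpha$-colour in $Y$, Pigeonhole gives a colour $c\in Y$ with $|\alpha^{-1}(c)\cap B|\le 1$. One recolours that single vertex (if it exists) to avoid $X\cup\{c\}$, recolours every $u\in A$ with $\beta(u)=c$ to $c$, deletes those $r\ge 1$ vertices from $A$ and deletes $c$ from the $B$-lists, and inducts on $K_{p-r,q}$; the cost is at most $1+r+(2(p-r)+q)=2p+q+1-r\le 2p+q$. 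If you want to salvage your approach, this Pigeonhole step is the missing ingredient; the alternating-cycle machinery is not needed.
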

\begin{proof}
Suppose the theorem is false and choose a counterexample $K_{p,q}$ minimizing
$p+q$.  By symmetry, we assume that $p\le q$.  Denote the parts of $G$ by $U$
and $W$, with $|U|=p$ and $|W|=q$.

By Lemma~\ref{lem:StructureMinimalCounterexample}, we have $\a(U)\subseteq\b(W)$ and
$\a(W)\subseteq\b(U)$.  By swapping the roles of $\a$ and $\b$, we also have
$\b(U)\subseteq\a(W)$ and $\b(W)\subseteq\a(U)$; thus, $\a(U)=\b(W)$ and
$\a(W)=\b(U)$.
Note that $\a(U)\cap\b(U)=\a(U)\cap \a(W)=\emptyset$, because the graph is complete bipartite.

\textbf{Case 1: $\bm{|W|\ge |\a(U)|+|\b(U)|-1}$.}
For each $u\in U$, we have $|L(u)|\ge |W|+2\ge
|\a(U)|+|\b(U)|+1=|\b(W)|+|\a(W)|+1$.  First recolour each $u\in U$ from
$L(u)\setminus(\a(W)\cup \b(W))$.  Now recolour each $w\in W$ with $\b(w)$.
Finally, recolour each $u\in U$ with $\b(u)$.  The number of steps that we use is
at most $2|U|+|W|=n(G)+\mu(G)$.

\textbf{Case 2: $\bm{|W|\le |\a(U)|+|\b(U)|-2}$.}
If $|W|\ge 2|\a(U)|$ and $|W|\ge 2|\b(U)|$, then $2|W|\ge 2|\a(U)|+2|\b(U)|$,
which contradicts the case.  So assume, by symmetry, that $|W|\le 2|\b(U)|-1$;
if not, then simply interchange the roles of $\a$ and $\b$.  Now, by Pigeonhole,
there exists $c\in \b(U)$ such that $|\a^{-1}(c)|= |\a^{-1}(c)\cap W|\le 1$.  So, recolour
$w\in \a^{-1}(c)$, if such $w$ exists, to avoid $\a(U)\cup\{c\}$, and then
recolour every $u\in \b^{-1}(c)$ with $c$.  Now we delete every $u$ such that $\b(u)=c$, we delete
$c$ from $L(w)$ for every $w\in W$, and we finish on the resulting smaller graph $G_2$
(with an assignment of smaller lists) by the minimality of $G$. In total, the number
of recolouring steps we use is at most $|\a^{-1}(c)| + |\b^{-1}(c)| + |V(G_2)| + \mu(G_2) \le  
|\a^{-1}(c)| + |\b^{-1}(c)| + ( n(G)- |\b^{-1}(c)| ) + ( \mu(G) - |\b^{-1}(c)| ) \leq n(G) +\mu(G)$.
\end{proof}

Using Lemma~\ref{lem:StructureMinimalCounterexample},
we prove that the 
\hyperref[conj:main_list]{List Conjecture} 
holds for all cycles.

\begin{lem}
\label{prop:cyclessatisfymainconj}
Let $G$ be a cycle $v_1v_2\cdots v_n$ and
$L$ be a list-assignment such that, for all $v_i$, we have $\lvert L(v_i) \rvert
\ge 4$.  If $\a$ and $\b$ are proper $L$-colourings of $G$, 
then we can recolour $G$ from $\a$ to $\b$ in at most $\lfloor 3n/2\rfloor$ steps.
\end{lem}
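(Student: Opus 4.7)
The plan is to combine Lemma~\ref{lem:StructureMinimalCounterexample} (applied with $b=2$) with an explicit three-phase sandwich recolouring. Since every proper induced subgraph of $C_n$ is a forest, Theorem~\ref{thr:n+tau_forpath} already yields the bound $n(H)+\mu(H)$ for each such $H$; hence if some triple $(C_n,L,\alpha,\beta)$ under our hypotheses had $\dist_L(\alpha,\beta) > \lfloor 3n/2\rfloor = n+\mu(C_n)$, the lemma would force $D_{\alpha,\beta}$ to be strongly connected on $C_n$.

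Next I would prove a short structural claim: a strongly connected spanning subdigraph of $C_n$ must contain (up to reversing the cycle's orientation) all forward arcs $\overrightarrow{v_iv_{i+1}}$. Indeed, if some forward arc $\overrightarrow{v_jv_{j+1}}$ is missing, every $v_j$-to-$v_{j+1}$ path in $D_{\alpha,\beta}$ must traverse the cycle the long way round, forcing every backward arc on the remaining edges; an inspection of the residual loophole (where $v_jv_{j+1}$ is arc-free and all other edges are bidirected) shows it forces $\alpha$ to be 2-periodic on those edges, whence $\beta(v_{j+1})=\alpha(v_j)$ puts the backward arc back on $v_jv_{j+1}$, a contradiction. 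So without loss of generality $\beta(v_i)=\alpha(v_{i+1})$ for all $i$: $\beta$ is the cyclic shift of $\alpha$.

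For this shift case, let $M$ be a maximum matching in the subgraph of $C_n$ formed by the bidirected edges of $D_{\alpha,\beta}$ (precisely the edges $v_iv_{i+1}$ with $\alpha(v_i)=\alpha(v_{i+2})$), and choose $T$ to contain one endpoint of each edge in $M$. Then $T$ is an independent set in $C_n$ and $|T|=\mu(D_{\alpha,\beta})\le\lfloor n/2\rfloor$. The proposed recolouring is a three-phase sandwich: (i)~recolour each $v=v_i\in T$ to a temporary colour $c_v \in L(v_i)\setminus \{\alpha(v_{i-1}),\alpha(v_i),\alpha(v_{i+1}),\alpha(v_{i+2})\}$; (ii)~within each maximal arc of $V(C_n)\setminus T$, recolour vertices to their $\beta$-values from the right endpoint inward; (iii)~finally recolour each $v\in T$ to $\beta(v)$. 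This uses exactly $n+|T|\le\lfloor 3n/2\rfloor$ recolouring steps.

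The main technical obstacle is verifying that each phase is executable. In phase~(i) the forbidden set has at most four colours, but because $v_i\in T$ is incident to a bidirected edge of $M$, one of the equalities $\alpha(v_{i-1})=\alpha(v_{i+1})$ or $\alpha(v_i)=\alpha(v_{i+2})$ holds, collapsing two forbidden colours into one, so $|L(v_i)|\ge 4$ always admits a valid $c_v$. In phase~(ii), processing $v_j$ from the right end of an arc requires that the edge $v_{j-1}v_j$ be \emph{not} bidirected whenever $v_{j-1}\in V(C_n)\setminus T$: this follows from the maximality of $M$, since otherwise $M\cup\{v_{j-1}v_j\}$ would be a larger matching. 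Phase~(iii) is automatic, since each $v_i\in T$ is recoloured to $\alpha(v_{i+1})$ with neighbours holding $\alpha(v_i)$ and $\alpha(v_{i+2})$, which differ from $\alpha(v_{i+1})$ by properness of $\alpha$. Executing these three verifications with care constitutes the heart of the proof.
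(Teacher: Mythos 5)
Your structural reduction is correct, and in fact more aggressive than the paper's: after invoking Lemma~\ref{lem:StructureMinimalCounterexample} with $b=2$, the paper only observes that a strongly connected $D_{\alpha,\beta}$ on the underlying cycle must contain a directed Hamilton cycle, whereas you additionally rule out the ``all edges but one bidirected, one edge arc-free'' configuration and conclude that (up to reversal) all forward arcs are present, i.e.\ $\beta$ is the cyclic shift of $\alpha$. That extra step is fine (the 2-periodicity argument goes through, giving a contradiction with properness when $n$ is odd and with arc-freeness when $n$ is even).

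There is, however, a genuine gap in the three-phase recolouring. Your scheme needs $M\neq\emptyset$. When $D_{\alpha,\beta}$ is \emph{exactly} a directed Hamilton cycle (no bidirected edges at all), you have $T=\emptyset$, and then ``the maximal arcs of $V(C_n)\setminus T$'' is the whole cycle with no right endpoint; phase~(ii) has no place to start. Indeed, since $\beta(v_i)=\alpha(v_{i+1})$ forces $v_{i+1}$ to change before $v_i$ for every $i$, no one-pass recolouring exists and some vertex must be recoloured at least twice, so the ``exactly $n+|T|=n$ steps'' count is not even achievable in this case. This is precisely the case the paper handles separately, further splitting $n=3$ (where $T=\emptyset$ is in fact the only strongly connected possibility and one picks a colour outside $\alpha(V)\cup\beta(V)$, giving $4=\lfloor 9/2\rfloor$ steps) from $n\geq 4$ (which needs $n+2\le\lfloor 3n/2\rfloor$ steps, using three recolourings for the last two vertices because one cannot avoid four prescribed colours with only $|L(v)|\ge 4$). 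Your proposal must supply an argument for this $T=\emptyset$ case.

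A smaller loose end: ``choose $T$ to contain one endpoint of each edge in $M$'' does not automatically give an independent set (for $M=\{v_1v_2,v_3v_4\}$ the choice $T=\{v_2,v_3\}$ fails). You should specify, e.g., always taking the tail of each $M$-edge in the orientation for which all forward arcs are present; with that choice $T$ is indeed independent, the matched partner of each $T$-vertex lies outside $T$, and the matching-maximality argument in phase~(ii) goes through as you describe. Aside from these points, your phases (i)--(iii) are verified correctly, and the approach is a legitimate alternative to the paper's, which instead peels off one vertex and falls back on the tree case (Theorem~\ref{thr:n+tau_forpath}) rather than doing a global sandwich recolouring.
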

\begin{proof}
By Lemma~\ref{lem:StructureMinimalCounterexample}, with $b=2$, if $\dist(\a, \b)>
\lfloor 3n/2\rfloor$, then $D_{\a,\b}$ is strongly connected.
This implies that $D_{\a,\b}$ contains a directed cycle $C_n$ as a subdigraph.
(If $D_{\a,\b}$ contains a bidirected path $P$, then $|\a(V(P))\cup \b(V(P))|=2$.
So, if $P$ is spanning, then its order must be even, to
avoid a conflict between the colours of its endpoints.  But then $D_{\a,\b}$
contains an additional arc, so $D_{\a,\b}$ contains a directed cycle, as claimed.)

If $D_{\a,\b}$ is a bidirected cycle, then $n$ must be even, as in the previous
paragraph, and $\frac{3n}{2}$ recolouring steps suffice.
Suppose instead that $D_{\a,\b}$ is precisely a directed cyle. 
When $n=3$, we recolour one vertex $v$ in a colour absent from
$\a(V(G))\cup\b(V(G))$, recolour the other two vertices in order (to match
$\b$), and finally recolour $v$ with $\b(v)$.  When $n\ge 4$, we recolour one
vertex $v$ in a colour absent from $\a(v)\cup \a(N(v))$.
Now we can recolour correctly (in order) all vertices of $G$ except for $v$ and one
of its neighbours.  Correctly colouring these final two vertices takes at most 3
recolouring steps. Now we are done, since $1+(n-2)+3 \le \lfloor 3n/2\rfloor$.

In the remaining case, we have $n \ge 4$ and some directed edge is adjacent to a
bidirected edge.  Without loss of generality, we assume $\vc{v_1v_2},
\vc{v_2v_3}, \vc{v_2v_1}\in A(D_{\a,\b})$ but $\vc{v_3v_2}\notin A(D_{\a,\b})$.
Recolour $v_1$ with a colour $c$ different from $\a(v_1)$, $\a(v_2)$, and $\a(v_n)$.
Now delete $c$ from $L(v_2)$ and $L(v_n)$, and let $G':=G-v_1$.
By Theorem~\ref{thr:n+tau_forpath}, 
we can recolour $G'$ from $\a$ to $\b$ (both restricted to $G'$) 
using at most $(n-1)+\left \lfloor \frac{n-2}{2}\right \rfloor$ 
recolouring steps. 
Here we use that $\vc{v_3v_2}$ is not an arc, so $\mu(D_{\a,\b}[V(G')])
\le \left \lfloor \frac{n-2}{2}\right \rfloor$.
Finally, we recolour $v_1$ with $\b(v_1)$.
\end{proof}

Using 
Lemmas~\ref{lem:StructureMinimalCounterexample} 
and~\ref{prop:cyclessatisfymainconj},
we can prove that the 
\hyperref[conj:main_list]{List Conjecture} 
is true for every cactus. 


\begin{theorem}
The \hyperref[conj:main_list]{List Conjecture} 
is true for every cactus.
\end{theorem}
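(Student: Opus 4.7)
The plan is to proceed by induction on $n(G)$, toward a contradiction with the assumption that $G$ is a minimum cactus counterexample to the \hyperref[conj:main_list]{List Conjecture}. The base cases $G$ a tree and $G$ a single cycle are already covered by Corollary~\ref{cor:diamle_n+mu} and Lemma~\ref{prop:cyclessatisfymainconj}, so we may assume the block-cut tree of $G$ has at least two nodes. Pick any of its leaves; it corresponds to a leaf block $B$ meeting the rest of $G$ at a single cut vertex $v$. Let $H := G - (V(B) \setminus \{v\})$, a strictly smaller cactus. By Lemma~\ref{lem:StructureMinimalCounterexample} applied to the partition $V(B) \setminus \{v\}$ versus $V(H)$, the digraph $D_{\alpha,\beta}$ is strongly connected and in particular has arcs in both directions between these two parts; each such arc must pass through $v$, so the colour relations between $v$ and its $B$-neighbours are heavily constrained.

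When $B$ is a bridge $vx$, strong connectivity forces $\alpha(x) = \beta(v)$ and $\alpha(v) = \beta(x)$. When $B$ is a cycle $v v_1 \cdots v_{s-1} v$, at least one of $\vc{vv_1}, \vc{vv_{s-1}}$ and at least one of $\vc{v_1v}, \vc{v_{s-1}v}$ lies in $D_{\alpha,\beta}$. In both cases, my strategy is to recolour a single vertex of $B \setminus \{v\}$ to a carefully chosen auxiliary colour (using the slack $|L(x)| \ge 3$ for a leaf $x$, or $|L(v_i)| \ge 4$ for a degree-$2$ cycle vertex), then transform $H$ using the inductive hypothesis with this auxiliary colour deleted from $L(v)$ (which preserves the $d+2$ list condition on $H$), then transform the path $B \setminus \{v\}$ using Theorem~\ref{thr:n+tau_forpath}, and finally correct the auxiliary vertex with one last recolouring. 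Since $n(G) = n(H) + |V(B)| - 1$ and $\mu(G) \ge \mu(H) + \mu(B - v)$, the budget closes whenever $\mu(B - v)$ is large enough to absorb the extra step: this is automatic in the cycle case, and in the bridge case whenever $\mu(G) > \mu(H)$.

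The main obstacle will be the remaining bridge case, i.e.\ when $\mu(G) = \mu(H)$; by Gallai--Edmonds this means every maximum matching of $H$ already saturates $v$, and the naive bookkeeping overspends by exactly one step. I plan to recover this step by a more careful choice of intermediate colour: either pick $c = \beta(v)$ for $x$ (available precisely when $\beta(v) \notin \alpha(N_H(v)) \cup \{\alpha(v)\}$, so that after recolouring $x$ the vertex $v$ is already at its target colour and the subsequent induction on $H$ avoids a second visit to $v$), or recolour $v$ first to a colour $c \in L(v) \setminus (\alpha(N(v)) \cup \{\alpha(v)\})$ chosen to simultaneously unlock the swap at $x$ and an additional bidirected swap inside $H$, imitating the trick that closes the cycle case in Lemma~\ref{prop:cyclessatisfymainconj}. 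Proving that at least one of these savings is always available, using the Gallai--Edmonds structure at $v$ together with the slack $|L(v)| \ge d(v) + 2$, is where the real work lies.
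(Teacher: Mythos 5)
Your outline has a genuine gap, and you in fact flag the most serious part of it yourself: the bridge-endblock case with $\mu(G)=\mu(H)$ is left entirely open, ending with ``this is where the real work lies.'' Neither of the two proposed savings is shown to exist, and the first one is vacuous as written: strong connectivity forces $\a(x)=\b(v)$, so ``recolour $x$ to $\b(v)$'' is a no-op and cannot buy back a step. Moreover, the cycle-endblock case is not ``automatic'' as claimed. For an endblock cycle $B=C_s$ at cut vertex $v$, the naive bookkeeping gives at most $1+\bigl(n(H)+\mu(H)\bigr)+\bigl((s-2)+\lfloor(s-2)/2\rfloor\bigr)+1$ steps; since $n(G)=n(H)+s-1$ and $\mu(G)\ge\mu(H)+\lfloor(s-1)/2\rfloor$, this equals $n(G)+\mu(G)+1$ whenever $s$ is even, so the budget fails for even cycle endblocks. (Feeding the whole path $B\setminus\{v\}$ to Theorem~\ref{thr:n+tau_forpath} produces the same deficit.)

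The paper closes both holes with one structural lemma about the minimal counterexample: if $v$ is saturated by every maximum matching (that is, $\mu(G-v)=\mu(G)-1$), then $L(v)\subseteq\bigcup_{w\in N(v)}\{\a(w),\b(w)\}$, and a counting argument, using that each block of a cactus contributes at most two neighbours of $v$ together with strong connectivity of $D_{\a,\b}$, shows each such block contributes exactly two arcs and $v$ has no adjacent leaf. This rules out bridge endblocks outright, which is exactly the case your sketch cannot finish. For cycle endblocks the paper does not route through Theorem~\ref{thr:n+tau_forpath}; it first shows $D_{\a,\b}$ restricted to the cycle is a directed cycle, then recolours the cycle vertices by following that orientation so that each is recoloured once (after one preparatory move at the cut vertex), which is precisely the extra step your path-based count overspends in the even case; the odd case needs still more case analysis. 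Your Gallai--Edmonds instinct is pointing at the right invariant (every-maximum-matching saturation of the cut vertex), but the colour-counting bound on $\bigcup_{w\in N(v)}\{\a(w),\b(w)\}$ that makes it bite, and the orientation-following recolouring that replaces the path bound, are both absent.
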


\begin{proof}
Instead assume the theorem is false.  Let $G$ be a counterexample
minimizing $n(G)$ and let $\a$ and $\b$ be $L$-colourings with $\dist(\a, \b)>n(G)+\mu(G)$.
Every proper induced subgraph $H$ of $G$ is a disjoint union of cactuses, say
$H_1,\ldots,H_r$;
since $n(H)=\sum_{i=1}^rn(H_i)$ and $\mu(H)=\sum_{i=1}^r\mu(H_i)$, 
the \hyperref[conj:main_list]{List Conjecture} 
must hold for $H$, by the minimality of $G$.
Lemma~\ref{lem:StructureMinimalCounterexample} implies that $D_{\a,\b}$ is strongly connected.
And Lemma~\ref{prop:cyclessatisfymainconj} implies that $G$ is not a cycle.
The following claim restricts the structure of $G$, $\a$, and $\b$.

\begin{claim}
\label{key-helper}
If $v\in V(G)$ and $\mu(G-v)=\mu(G)-1$, then $L(v)\subseteq \cup_{w\in
N(v)}\{\a(w),\b(w)\}$.  In particular, each block of $G$ containing $v$ gives
rise to exactly two arcs in $D_{\a,\b}$ incident to $v$, and $v$ has no adjacent
leaf in $G$.
\end{claim}
\begin{claimproof}
Instead assume there exists $c\in L(v)\setminus\cup_{w\in
N(v)}\{\a(w),\b(w)\}$.  Recolour $v$ with $c$, let $G':=G-v$, let
$L'(w):=L(w)\setminus\{c\}$ for all $w\in N(v)$, and otherwise let
$L'(w):=L(w)$.  
Denote by $\a'$ and $\b'$ the restrictions to $G'$ of $\a$ and $\b$.
Since $G$ is minimal, we can recolour $G'$ from $\a'$ to
$\b'$, using $L'$, in at most $n(G')+\mu(G')=n(G)-1+\mu(G)-1$
recolouring steps.  We finish by recolouring $v$ with $\beta(v)$.
This proves the first statement.

Since $D_{\a,\b}$ is strongly connected, each block $B$ of $G$ containing $v$
gives rise to at least two arcs in $D_{\a,\b}$ incident to $v$, one in each
direction.  Since $G$ is a cactus, each such $B$ contains at most 2 neighbours
of $v$.  So at least half of the neighbours of $v$ are coloured $\b(v)$ by
$\a$, and also at least half of the neighbours of $v$ are coloured $\a(v)$ by
$\b$. Therefore $|\cup_{w\in N(v)}\{\a(w),\b(w)\}|\le 2\cdot (\deg(v)/2+1))=
\deg(v)+2$.  If any such $B$ is either $K_2$ or gives rise to at least three
arcs in $D_{\a,\b}$ incident with $v$, then $|\cup_{w\in
N(v)}\{\a(w),\b(w)\}|\le \deg(v)+1<\lvert L(v)\rvert$, which contradicts the
first statement.   This proves the second statement.  
\end{claimproof}

    
It is easy to check that every neighbour $v$ of a leaf $w$ in $G$ satisfies
$\mu(G-v)=\mu(G)-1$.  Thus, Claim~\ref{key-helper} implies that $G$ has no leaf
vertex.  This implies that all endblocks of $G$ are cycles.
Let $C_s$ be an endblock.  
Denote its vertices by $w_1,\ldots,w_s$, such that $w_s$ is the unique cut-vertex in the block.

\begin{claim}
The endblock $C_s$ is not an even cycle.
\label{even-cycle-clm}
\end{claim}
\begin{claimproof}
Assume instead that $C_s$ is an even cycle.
It is
easy to check that $\mu(G-w_i)=\mu(G)-1$ whenever $i$ is even.
That is, every maximum matching saturates $w_i$ whenever $i$ is even.  By
Claim~\ref{key-helper}, every $w_i$ is incident in $D_{\a,\b}$ to exactly two
arcs arising from $C_s$.  Since $D_{\a,\b}$ is strongly connected, this implies
that $D_{\a,\b}[V(C_s)]$ is a directed cycle; by symmetry, we assume that it is
oriented as $\vc{w_sw_1},\vc{w_1w_2},\ldots,\vc{w_{s-1}w_s}$.

We first recolour $w_s$ with a colour absent from $\a(N[w_s])=\left(\cup_{x\in
N(w_s)} \a(x) \right)\cup \b(w_{s-1})$.
Now we recolour each $w_i$ to
$\b(w_i)$, with $i$ decreasing from $s-1$ to $2$.  Let
$G':=G-\{w_1,\ldots,w_{s-1}\}$, let
$L'(w_s):=L(w_s)\setminus\{\a(w_1),\b(w_{s-1})\}$, and otherwise let
$L'(v):=L(v)$.  Since $G$ is minimal, we can recolour $G'$ from its current
colouring to $\b$ (restricted to $G'$) using at most $n(G')+\mu(G')\le
(n(G)-(s-1))+(\mu(G)-\frac{s}2+1)$ steps.  Finally, recolour $w_1$ to $\b(w_1)$.  The
total number of recolouring steps is at most
$1+(s-2)+(n(G)-(s-1)+\mu(G)-\frac{s}2+1)+1 = n(G)+\mu(G)+2-\frac{s}2$.
Since $s\ge 4$, this is at most $n(G)+\mu(G)$. 
\end{claimproof}
    
     \begin{claim}
        The endblock $C_s$ is not an odd cycle.
        \label{odd-cycle-clm}
    \end{claim}
    \begin{claimproof}
Assume instead that $C_s$ is an odd cycle.
        If $D_{\a,\b}[C_s - w_s]$ contains fewer than
$\floor{\frac{s}{2}}$ disjoint digons (for example, this is true when $s=3$),
then we can easily finish, as follows.
We recolour $w_s$ in a colour different from $\a(N(w_s)) \cup \b(\{w_1,w_{s-1}\})$.
This is possible because $\b(w_s)$ is used by $\a$ on at least two neighbours of
$w_s$, since $D_{\a,\b}$ is strongly connected.
By Theorem~\ref{thr:n+tau_forpath},
we recolour $C_s - w_s$ to $\beta$ in at most
$(s-1)+\floor{\frac{s}{2}}-1$ steps, 
and we then recolour $G \backslash (C_s-w_s)$ to $\b$ in at most
$n(G)-(s-1)+\mu(G)-\floor{\frac{s}{2}}$ steps.  Thus, we recolour $G$ from $\a$
to $\b$ in at most $n(G)+\mu(G)$ steps.

Now we consider the other case, when 
$w_iw_{i+1}$ is a digon of $D_{\a,\b}$ for every odd $i$ with $i<s$.
Since $D_{\a,\b}[C_s]$ is strongly connected, we assume $D_{\a,\b}$
contains arc $w_iw_{i+1}$ for every $0\le i \le s-1$.
Similar to the proof of Lemma~\ref{prop:cyclessatisfymainconj},
here $D_{\a,\b}[C_s]$ cannot contain a spanning bidirected path, since $s$ is odd.
%
%
This implies, for some even $j$, that $D_{\a,\b}$ does not contain $w_{j+1}w_j$.
Now we will recolour some set of $w_i$'s to reach a new colouring $\tilde{\a}$,
such that $D_{\tilde{\a},\b}[C_s]$ is either acyclic or contains a single directed
cycle, $w_{s-2}w_{s-1}$.  From $\tilde{\a}$, we can recolour $G\setminus
(C_s-w_s)$ by the minimality of $G$, and afterward finish on $C_s-w_s$.  The
details follow.

First suppose that $\b(w_s)=\a(w_{s-1})$.
For every $i$ such that either (a) $i < j$ and $i$ is odd or (b) $i > j$ and $i$
is even, recolour $w_i$ with a colour different from those in $\{\a(w_{i-1}),
\a(w_i),\a(w_{i+1})\}$. Here we take the indices modulo $s$, i.e., $w_0=w_s$.
If $\b(w_s)\ne \a(w_{s-1})$, then we recolour the same set of $w_i$'s, except
for $w_{s-1}$.  Let $G':=G-\{w_1,\ldots,w_{s-1}\}$.
By the minimality of $G$, we can recolour $G'$ in at most
$n(G')+\mu(G')=(n(G)-(s-1))+(\mu(G)-\floor{\frac{s}{2}})$ steps. 
%
In the case that $\b(w_s)\ne \a(w_{s-1})$, we now recolour $w_{s-1}$ 
with a colour different from those in $\{\a(w_{s-2}), \a(w_{s-1}),\b(w_{s})\}$. 
For every odd $i$ such that $j+1 \le i \le s-2$, we recolour $w_i$ with $\b(w_i)$.
Next, for every even $i$ such that $2 \le i \le j$, we recolour $w_i$ with $\b(w_i).$
Finally, the remaining vertices in $C_s$ can also be recoloured with their colour in $\b$.
This process uses at most $n(G)+\mu(G)$ steps.
    \end{claimproof}
    
Recall that every endblock of $G$ is a cycle, as observed following
Claim~\ref{key-helper}.  Thus, Claims~\ref{even-cycle-clm}
and~\ref{odd-cycle-clm} yield a contradiction, which proves the theorem.
\end{proof}

\section{Proving the 
Correspondence Conjecture 
for Cactuses, Subcubic
Graphs, and Graphs with Low Maximum Average Degree}
\label{classes-correspondence:sec}

The \emph{maximum average degree} of a graph $G$, denoted $\mad(G)$, is the
maximum, taken over all subgraphs $H$, of the average degree of $H$.  That is,
$\mad(G):=\max_{H\subseteq G}2|E(H)|/|V(H)|$.  Let $\deg^1(v)$ denote the number
of neighbours $w$ of $v$ such that $\deg(w)=1$.
In this section we prove the 
\hyperref[conj:main_DP]{Correspondence Conjecture} 
for all subcubic graphs, cactuses, and graphs $G$ with $\mad(G)<2.4$. 
To do so, we often implicitly use the following
observation.  We omit its proof, which is easy.

\begin{obs}
If $G$ is a graph with a minimum vertex cover $S$ and $v\in S$, then
$\tau(G-v)=\tau(G)-1$.
\end{obs}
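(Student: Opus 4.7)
The plan is to prove the two inequalities $\tau(G-v)\le \tau(G)-1$ and $\tau(G-v)\ge \tau(G)-1$ separately; together they give the desired equality.

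For the upper bound, I would use the given minimum vertex cover $S$ directly. Consider $S':=S\setminus\{v\}$. Every edge of $G-v$ is an edge of $G$ not incident to $v$, so it must have been covered by $S$ via some endpoint other than $v$; hence $S'$ covers it. Thus $S'$ is a vertex cover of $G-v$ of size $|S|-1=\tau(G)-1$, which yields $\tau(G-v)\le \tau(G)-1$. This is the step where the hypothesis $v\in S$ (for $S$ a \emph{minimum} vertex cover) is used.

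For the lower bound, the hypothesis $v\in S$ is not needed, and indeed the inequality $\tau(G-v)\ge \tau(G)-1$ holds for every vertex $v$ of every graph. Let $T$ be a minimum vertex cover of $G-v$. Then $T\cup\{v\}$ covers every edge of $G$: edges not incident to $v$ are covered by $T$ (since they are edges of $G-v$), and edges incident to $v$ are trivially covered by $v$. Hence $\tau(G)\le |T|+1 = \tau(G-v)+1$, which rearranges to the desired bound.

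Combining the two inequalities gives $\tau(G-v)=\tau(G)-1$. No genuine obstacle arises; the only subtlety is noticing that the upper bound genuinely requires $v$ to lie in some minimum vertex cover (in general one only has $\tau(G-v)\le \tau(G)$), while the lower bound is a universal monotonicity property of $\tau$ under single-vertex deletion.
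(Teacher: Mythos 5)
Your proof is correct, and since the paper explicitly omits the proof of this observation (``We omit its proof, which is easy.''), there is nothing to compare against beyond the fact that yours is the standard two-inequality argument one would expect. Both directions are argued cleanly, and you correctly isolate where the hypothesis $v\in S$ with $S$ minimum is actually used (only for the upper bound) versus where it is superfluous (the lower bound, which is a general monotonicity fact).
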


\begin{theorem}
\label{thr:subcubic_cover}
Let $G$ be a graph and $(L,H)$ a correspondence cover for $G$ such that, for all
$v\in V(G)$ we have $|L(v)|\ge \deg(v)+2$. Now $\diam\C_{(L,H)}(G)\le
n(G)+\tau(G)$ if at least one of the following holds.
\begin{itemize}
\item[(a)] $\Delta(G)\le 3$.
\item[(b)] $G$ is a cactus.
\item[(c)] $\mad(G)<2.4$.
\end{itemize}
\end{theorem}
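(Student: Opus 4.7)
The plan is to handle all three parts by a minimal counterexample argument that mirrors the proof of the List Conjecture for cactuses, using as its central input the correspondence analogue of Lemma~\ref{lem:StructureMinimalCounterexample}, which the paper explicitly mentions holds by an almost identical proof. To set this analogue up, one extends the definition of $D_{\a,\b}$ to correspondence colourings by declaring $\vc{vw}\in A$ if and only if $vw\in E(G)$ and $\b(v)$ is matched in $H$ to $\a(w)$; the partition step in Lemma~\ref{lem:StructureMinimalCounterexample} then goes through using $\tau(G[V_1]) + \tau(G[V_2]) \le \tau(G)$, obtained by restricting any minimum vertex cover of $G$ to each part. Hence, in any minimal counterexample $(G,L,H,\a,\b)$ with $\dist(\a,\b) > n(G)+\tau(G)$, the digraph $D_{\a,\b}$ is strongly connected.

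For part (b), I would transcribe the proof of the List Conjecture for cactuses, substituting $\tau$ for $\mu$ throughout. The base case requires a correspondence version of Lemma~\ref{prop:cyclessatisfymainconj} with target $n+\tau(C_n) = n+\lceil n/2\rceil$; since $\tau(C_n) = \mu(C_n)+1$ for odd $n$, we have an extra recolouring to spend relative to the list version, which actually simplifies the case analysis. The inductive step uses the correspondence analogue of Claim~\ref{key-helper} (for any $v$ with $\tau(G-v)=\tau(G)-1$, every colour in $L(v)$ is $H$-matched to $\a(w)$ or $\b(w)$ for some $w\in N(v)$, else one recolours $v$ with an unused colour and applies induction to $G-v$), the endblock-is-a-cycle observation, and the even/odd endblock case analysis, all carried through with the $\tau$-budget.

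For parts (a) and (c), the unifying strategy is: fix a minimum vertex cover $S$, then find some $v\in S$ admitting a colour $c\in L(v)$ not $H$-matched to any colour in $\{\a(w),\b(w):w\in N(v)\}$. Recolouring $v$ to $c$ and deleting $v$ (which drops $\tau$ by one) yields a smaller instance and a contradiction with minimality. The naive bound is tight: $|L(v)|\ge\deg(v)+2$ while up to $2\deg(v)$ forbidden colours may occur, so the argument must use strong connectivity of $D_{\a,\b}$ to force overlap among the matched colours. For subcubic graphs (a), a short case analysis on $\deg(v)\le 3$ (splitting on whether $v$ has a leaf neighbour, is itself of degree one or two, etc.) suffices; for $\mad(G)<2.4$ (c), a discharging argument produces a light configuration at which the same overlap occurs. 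The main obstacle will be part (a) in the 2-connected 3-regular subcase, where no low-degree reduction is available and the required overlap among matched colours at a degree-three vertex must be extracted entirely from strong connectivity of $D_{\a,\b}$, essentially by showing that strong connectivity already forces at least two of the six colours $\{\a(w),\b(w):w\in N(v)\}$ (matched back into $L(v)$ via $H$) to coincide.
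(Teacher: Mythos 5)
Your setup of the correspondence analogue of Lemma~\ref{lem:StructureMinimalCounterexample}, with $\tau(G[V_1])+\tau(G[V_2])\le\tau(G)$ via restricting a minimum vertex cover, is correct and matches what the paper uses. However, your plan for parts (a) and (c) has a genuine gap, and you have in fact put your finger on it yourself: the strategy ``find $v$ in a minimum vertex cover with a colour $c\in L(v)$ not $H$-matched to any of $\{\a(w),\b(w):w\in N(v)\}$'' does not always succeed, and strong connectivity of $D_{\a,\b}$ does \emph{not} force the required overlap. Strong connectivity at $v$ only guarantees one arc into $v$ and one arc out, so it only tells you that $\a(v)$ and $\b(v)$ appear among the forbidden colours; it does not force two of the up to $2\deg(v)$ matched colours to coincide. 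Already at a degree-$2$ vertex of a cycle one can have all $4$ colours of $L(v)$ forbidden while $D_{\a,\b}$ is strongly connected, so your method stalls even before the $3$-regular case you flag.

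The missing idea is a dichotomy that avoids needing a free colour at $v$ at all. The paper's argument for a vertex $v$ of degree at most $3$ in some minimum vertex cover goes like this: either at most one neighbour $w$ has $\a(w)$ $H$-matched to $\b(v)$ --- in which case one first recolours that $w$ away from everything matched to $\a(N(w))\cup\{\b(v)\}$ (possible since $|L(w)|\ge\deg(w)+2$), then recolours $v$ directly to $\b(v)$, deletes $v$, and recurses on $G-v$ with the $\b(v)$-matched colours struck from the neighbours' lists, spending $2+n(G-v)+\tau(G-v)=n(G)+\tau(G)$ steps --- or at least two such neighbours exist, and (running the same dichotomy on $\b$, i.e., using that $\dist(\a,\b)=\dist(\b,\a)$) at least two neighbours have $\b(w)$ matched to $\a(v)$, which drops the forbidden count to at most $2\deg(v)-2<\deg(v)+2$, so a free colour exists. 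This ``either recolour a neighbour first, or there is enough overlap'' split is what makes the $\deg(v)\le 3$ case close, with no appeal to strong connectivity at $v$ at all. Your discharging for (c) is on the right track for finding a light vertex, but you would still need this dichotomy (or the paper's second reducible configuration, a vertex $v$ with $\deg(v)\ge 4$ and $\deg(v)-\deg^1(v)\le 2$, handled by a similar argument) to finish.

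On part (b), your plan to transcribe the list-colouring cactus proof with $\tau$ in place of $\mu$ diverges from the paper, which instead shows that every cactus contains one of the same two reducible configurations used for (a) and (c) (an endblock that is a cycle yields a non-cut vertex of degree $2$ in every vertex cover; if all endblocks are pendent edges, a leaf at an endpoint of a diameter of the block tree yields the other configuration). That unified route is considerably shorter than redoing the even/odd endblock analysis of Section~\ref{classes-list:sec}. Your transcription is also not purely mechanical: the correspondence analogue of Lemma~\ref{prop:cyclessatisfymainconj} needs care because the list-colouring argument that a bidirected spanning path forces $|\a(V(P))\cup\b(V(P))|=2$ (and hence even length) does not carry over, since correspondence colours have no global identity across vertices; you would need to replace that structural step, not just adjust the budget from $\mu$ to $\tau$.
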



\begin{proof}
Fix $G$ satisfying (a), (b), or (c) and $(L,H)$ as in the theorem.  Fix
arbitrary $(L,H)$-colourings $\a$
and $\b$.  Our proof is by a double induction: primarily on $\tau(G)$ and
secondarily on $|V(G)|$.  The base case, $\tau(G)=0$,
is trivial, since $G$ is an independent set and we can greedily recolour $G$
from $\a$ to $\b$.  For the induction step, assume $\tau(G)\ge 1$.

We first show that $G$ contains either (i) a vertex $v$ such that $\deg(v)\le 3$
and $v$ lies in some minimum vertex cover or (ii) a vertex $v$ such that
$\deg(v)\ge 4$ and $\deg(v)-\deg^1(v)\le 2$.
Next we show how to proceed by induction in each of cases (i) and (ii).

If $G$ satisfies (a), then clearly $G$ contains an instance of (i).  Suppose $G$
satisfies (b), and consider an endblock $B$ of $G$.  If $B$ is a cycle, then $B$
contains adjacent non-cut vertices, $v$ and $w$; note that $\deg(v)=\deg(w)=2$.
Further, every vertex cover of $G$ contains $v$ or $w$, so $G$ contains (i).
Assume instead that every endblock of $G$ is an edge.  

Form a graph $J$ with a vertex for every block in $G$, where two vertices of
$J$ are adjacent if their corresponding blocks share a vertex in $G$.
Now the endpoints of a diameter in $J$ correspond with  pendent edges in $G$.
For the leaf $v$  corresponding to such a pendent edge, call its neighbour $w$.
Clearly $w$ lies in some minimum vertex cover of $G$. If $\deg(w)\le
3$, then $G$ contains (i). Otherwise, $\deg(w)-\deg^1(w)\le 2$, since by the
choice of $w$ at most one block incident to $w$ is not an endblock; so $G$
contains (ii).  This concludes the case that $G$ satisfies (b).

Assume instead that $G$ satisfies (c).  Suppose, to reach a contradiction, that
$G$ contains neither (i) nor (ii).  Form $G'$ from $G$ by deleting all vertices
with degree at most 1.  We will show that $2|E(G')|/|V(G')|\ge 2.4$, a
contradiction.  Note that $G'$ has minimum degree (at least) 2.  If
an arbitrary vertex $v$ satisfies
$\deg_{G'}(v)=2$, then $\deg_G(v)=2$, since otherwise $v$ is an instance of (i)
or (ii) in $G$.  Further, if $\deg_{G'}(v)=2$, then $\deg(w)\ge 3$ for all $vw\in
E(G')$, since otherwise $v$ or $w$ is an instance of (i) in $G$.  Now we will
reach a contradiction with discharging.  Give each vertex $v$ in $G'$ charge
$\ch(v):=d_{G'}(v)$.  We use a single discharging rule: Each 2-vertex in $G'$
takes $0.2$ from each neighbour.  Now each 2-vertex $v$ in $G'$ finishes with charge
$\ch^*(v)=2+2(0.2)=2.4$.  And each vertex $v$ with $d_{G'}(v)\ge 3$ finishes with
charge $\ch^*(v)\ge \deg_{G'}(v)-0.2\deg_{G'}(v)=0.8\deg_{G'}(v)\ge 2.4$.  This
yields the desired contradiction, which proves that $G$ contains either (i) or
(ii) if $\mad(G)<2.4$.

Now we prove the induction step in the cases that $G$ contains (i) or (ii).

Suppose $G$ contains (i).  Suppose at most one neighbour, say
$w$, of $v$ has $\a(w)$ matched with $\b(v)$. Now recolour $w$ with a colour
not matched to $\a(x)$, for every $x\in N(w)$, and not matched to $\b(v)$.
Next, recolour $v$ with $\b(v)$, and proceed on $G-v$ by induction (with the colour
matched to $\b(v)$ deleted from the lists of all vertices in $N(v)$).  
So instead assume there exist at least two such neighbours, say $w_1$ and $w_2$.  
By interchanging the roles of
$\a$ and $\b$ (and repeating this argument), we see that also there exist two
neighbours, say $x_1$ and $x_2$, such that $\b(x_1)$ and $\b(x_2)$ are matched
to $\a(v)$.  The total number of colours in $L(v)$ matched to $\a(w)$ or
$\b(w)$ for some $w\in N(v)$ is at most $2\deg(v)-2<\deg(v)+2$, since
$\deg(v)\le 3$.
Hence, there exists $c\in L(v)$ that is not matched to $\a(w)$ or $\b(w)$ for
all $w\in N(v)$.  Recolour $v$ with $c$.  Proceed on $G-v$
by induction, with that colour that $c$ is matched to deleted from the list
$L(w)$ for each $w\in N(v)$.  After finishing on $G-v$, recolour $v$ with $\b(v)$.
The number of steps we use is at most $1+n(G-v)+\tau(G-v)+1=n(G)+\tau(G)$.

Suppose instead $G$ contains (ii).  The proof is almost the same as for
(i).  If there exists $w\in N(v)$ such that $\deg(w)=1$ and $\a(v)$ is not
matched to $\b(w)$, then simply recolour $w$ with $\b(w)$, and proceed by
induction; here we use the secondary induction hypothesis, since possibly
$\tau(G-w)=\tau(G)$.  So assume that no such $w$ exists.  By interchanging the roles of
$\a$ and $\b$, we also assume that $\b(v)$ is matched to $\a(w)$ for each $w\in
N(v)$ with $d(w)=1$.  Further, if $v$ has a non-leaf neighbour $w$, then at least
one such neighbour has $\a(v)$ matched to $\b(w)$ and at least one (possibly the
same one) has $\a(w)$ matched to $\b(v)$.  
(This follows from the correspondence colouring analog of
Lemma~\ref{lem:StructureMinimalCounterexample}.)
But now there exists $(v,i)\in L(v)$
such that for all $w\in N(v)$, colour $(v,i)$ is not matched to either $\a(w)$
or $\b(w)$. Recolour $v$ to $(v,i)$, and let $G':=G-v$.  Form $(L',H)$ from
$(L,H)$ by deleting from $L(w)$, for every $w\in N(v)$, the colour matched with
$(v,i)$.  By induction, we can recolour $G'$ from $\a$ to $\b$ (both restricted
to $G'$) using at most $n(G')+\tau(G')=n(G)+\tau(G)-2$ steps.  Finally, recolour
$v$ to $\b(v)$.  This uses at most $n(G)+\tau(G)$ steps, which finishes the
proof.
\end{proof}

 
\begin{cor}
The 
\hyperref[conj:main_list]{List Conjecture} 
holds for all bipartite graphs $G$ with
$\Delta(G)\le 3$.
\end{cor}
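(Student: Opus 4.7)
The plan is to derive the corollary immediately from Theorem~\ref{thr:subcubic_cover}(a) combined with König's theorem. Since list colouring is a special case of correspondence colouring (any list-assignment $L$ can be encoded as a correspondence cover $(L,H)$ where the matching on edge $vw$ pairs up the colours common to $L(v)$ and $L(w)$), we always have $\diam \C_L(G) \le \diam \C_{(L,H)}(G)$ for the resulting cover. Thus for any graph $G$ with $\Delta(G)\le 3$ and any list-assignment $L$ with $|L(v)|\ge d(v)+2$, Theorem~\ref{thr:subcubic_cover}(a) yields
\[
\diam \C_L(G) \le n(G) + \tau(G).
\]

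Next I would invoke König's theorem, which states that for every bipartite graph $G$ we have $\mu(G)=\tau(G)$. Substituting this into the bound above gives $\diam \C_L(G) \le n(G) + \mu(G)$, which is exactly the \hyperref[conj:main_list]{List Conjecture} for this class.

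There is no real obstacle here: the work has already been done in Theorem~\ref{thr:subcubic_cover}, where case (a) handles subcubic graphs at the (stronger) level of correspondence colouring, and bipartiteness is used only through the classical identity $\mu = \tau$. The only small point to verify is that list colouring embeds into correspondence colouring, so that the correspondence bound transfers — but this embedding is standard and was already noted implicitly in Section~\ref{defs:sec}.
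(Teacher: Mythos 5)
Your proof is correct and is essentially the same as the paper's: the paper's one-line proof simply invokes K\H{o}nig's theorem ($\tau=\mu$ for bipartite graphs) after Theorem~\ref{thr:subcubic_cover}(a), relying on the standard embedding of list colouring into correspondence colouring. You have just spelled out the two implicit ingredients in more detail.
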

\begin{proof}
When $G$ is bipartite, recall that $\tau(G)=\mu(G)$. 
\end{proof}
 
\section{Concluding Remarks}

In this paper, we give evidence for both the 
\hyperref[conj:main_list]{List Conjecture} 
and the
\hyperref[conj:main_DP]{Correspondence Conjecture}.
We also give evidence for an affirmative answer to Question~\ref{q:n+mu_lowerbound}. 
The \hyperref[conj:main_list]{List Conjecture} 
and Question~\ref{q:n+mu_lowerbound} would together
determine the precise diameter for $\C_{\Delta(G)+2}(G)$ and, as such, a precise
bound when Cereceda's Conjecture is restricted to regular graphs.
So we explicitly conjecture the following.

\begin{conj}[Regular Cereceda's Conjecture]\label{reg-cereceda}
For a $d$-regular graph $G$, if $k = d+2$, then 
$\diam~\C_k(G)=n(G)+\mu(G)$.
\end{conj}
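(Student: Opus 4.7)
The equality splits naturally into an upper and a lower bound, neither of which is currently known unconditionally. For the upper bound $\diam \C_{d+2}(G) \le n(G) + \mu(G)$, my plan is to invoke the List Colouring Reconfiguration Conjecture (Conjecture~\ref{conj:main_list}) applied to the constant list-assignment $L(v) := [d+2]$ on the $d$-regular graph $G$: here $|L(v)| = \deg(v) + 2$ for every $v$, so the List Conjecture delivers $\diam \C_L(G) \le n(G) + \mu(G)$, and $\C_L(G) = \C_{d+2}(G)$. Unconditionally, I would attempt this bound by specialising the proofs of Theorems~\ref{factor2:thm}(i) or~\ref{big-lists:thm}(i) and exploiting the rigidity afforded by a common palette of $d+2$ colours on a regular graph to shave off the factor $2$ in front of $\mu(G)$, or to lower the list size requirement from $2\deg(v)+1$ to $\deg(v)+2$.

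For the lower bound $\diam \C_{d+2}(G) \ge n(G) + \mu(G)$, my plan is to resolve Question~\ref{q:n+mu_lowerbound} in the regular case. Following the template of Proposition~\ref{lower-bound-prop}, I would fix a maximum matching $M$ of $G$ and construct two proper $(d+2)$-colourings $\a, \b$ with $\a(v) \neq \b(v)$ for every $v \in V(G)$ and $\a(v) = \b(w)$, $\a(w) = \b(v)$ for every edge $vw \in M$. Observation~\ref{lower-bound:obs} then forces $\dist(\a,\b) \ge n(G) + \mu(G)$ immediately. Such a pair exists precisely when the auxiliary graph $\widehat{G}$ of Proposition~\ref{lower-bound-prop} (with pendant edges added for any vertices unsaturated by $M$) admits a proper $(d+2)$-colouring.

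The central obstacle is to control $\chi(\widehat{G})$: a priori one only has $\Delta(\widehat{G}) \le 2d - 1$ and hence $\chi(\widehat{G}) \le 2d$, which is far too large when $d \ge 3$. Example~\ref{choose-M-example} shows that $\chi(\widehat{G})$ can swing dramatically with the choice of $M$, but the constructions there are not regular, and the hope is that for $d$-regular $G$ a suitable $M$ always exists. A plausible strategy is to build $M$ from a carefully chosen colour class of a $(d+1)$-edge-colouring of $G$ (which exists by Vizing's theorem), then bound $\chi(\widehat{G})$ block-by-block using the Gallai–Edmonds decomposition (Theorem~\ref{EG-decomp}) and handle factor-critical components by an ad hoc argument modelled on Lemma~\ref{lem:lists:d+2}. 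The step I expect to resist easy attack is ruling out pathological $d$-regular graphs for which every maximum matching $M$ produces a $\widehat{G}$ requiring more than $d+2$ colours; should such graphs exist, the $\widehat{G}$-route must be replaced by a direct inductive construction of $\a$ and $\b$ that exploits the structural symmetry of $d$-regularity without passing through $\widehat{G}$ at all.
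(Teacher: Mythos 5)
The statement you were asked to prove is, in the paper, posed explicitly as a \emph{conjecture}, not a theorem: the authors observe that the \hyperref[conj:main_list]{List Conjecture} (giving the upper bound $\diam\C_{d+2}(G)\le n(G)+\mu(G)$ via constant lists) together with an affirmative answer to Question~\ref{q:n+mu_lowerbound} (the lower bound $\diam\C_{d+2}(G)\ge n(G)+\mu(G)$) would jointly establish it, and they leave both ingredients open. Your proposal correctly recognises that no unconditional proof exists and decomposes the problem into exactly the same two halves the paper identifies, so on that score you are aligned with the authors' own framing.

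Beyond that, your additional speculation about the lower bound is plausible but does not close the gap, and you should be aware that one concrete step is shakier than you suggest. Taking $M$ to be a single colour class of a Vizing edge-colouring is not obviously helpful: a $d$-regular graph of class two has no colour class that is a perfect (or even maximum) matching, and even for class-one graphs a colour class is merely \emph{some} perfect matching with no special control over $\chi(\widehat{G})$. The obstruction you name at the end --- that every choice of $M$ might force $\chi(\widehat{G})>d+2$ --- is genuinely the crux, and the paper's own partial results (the proposition proving the lower bound under conditions such as $\chi(G)\le3$, $\Delta(G)=3$, or $k\ge 2\chi(\tilde G)$) are evidence that controlling this quantity is where all the difficulty lives. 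So your reduction is correct and matches the paper, but the two sub-conjectures remain unproven, and your proposed route to the lower bound via edge-colourings needs a substantially new idea to handle class-two regular graphs.
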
  

%
In Theorem~\ref{thm:lists:d+2}
we prove, 
when $\lvert L(v) \rvert \ge d(v)+2$ for all $v\in V(G)$, that
$\diam \C_L(G)\le n(G)+2\mu(G)\le 2n(G)$.
In a similar vein, it would be interesting to show that
$\diam \C_L(G)$ is linear when $\lvert L(v) \rvert \ge \lceil \mad(G)+2\rceil.$
The following conjecture can be viewed as a ``balanced'' version of the
\hyperref[conj:main_list]{List Conjecture}.

\begin{conj}[Mad Colouring Reconfiguration Conjecture]
\label{conj:main_mad}
For a graph $G$ with $\mad(G)=d$, if $L$ is a list-assignment such that $\lvert L(v) \rvert
\ge \lceil d+2\rceil$ for every $v \in V(G)$, then $\diam \C_L(G)=O_d(n).$
\end{conj}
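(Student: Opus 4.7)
The plan is to proceed by induction on $n(G)$, leveraging two structural facts: $\mad(G) \le d$ forces $G$ to be $\lfloor d \rfloor$-degenerate, and $\mad$ is monotone under taking subgraphs so the inductive hypothesis carries over to $G-v$ for any $v$. Set $k:=\lceil d+2\rceil$. In every non-empty subgraph of $G$ there is a vertex $v$ with $\deg(v)\le \lfloor d\rfloor$, and for any such vertex $|L(v)|\ge \deg(v)+2$. The goal is to establish a recurrence of the form $T(n)\le T(n-1)+C(d)$, yielding the desired $O_d(n)$ bound.

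For the inductive step, pick $v$ with $\deg(v)\le \lfloor d\rfloor$. A natural reduction is: first recolour the at most one neighbour $w$ of $v$ currently coloured $\b(v)$ to some colour avoiding $\b(v)$ and all of $\b(N(w))$; then recolour $v$ directly to $\b(v)$; then invoke the induction hypothesis on $G-v$ with the restriction of the colourings and the list-assignment. Since $|L(u)|\ge k\ge \lceil \mad(G-v)+2\rceil$, the hypothesis carries over, and no list needs to be shrunk. If the preliminary recolouring of $w$ can always be carried out in $C(d)$ steps, we obtain the desired recurrence.

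The main obstacle is that $w$ may have arbitrarily large degree; since only $\mad(G)$ (and not $\Delta(G)$) is controlled, we cannot guarantee $|L(w)|\ge \deg(w)+2$, so $w$ cannot always be recoloured away from $\b(v)$ in a single step, and cascading recolourings of neighbours of $w$ may ensue. To circumvent this, I would first handle the small set of high-degree vertices separately. Concretely, let $H:=\{v\in V(G):\deg(v)>d^{2}\}$; a standard mad counting argument gives $|H|=O(n/d)$. In a preliminary phase I would transform $\a$ and $\b$ into a common canonical colouring on $H$ (for instance by adapting the Edmonds--Gallai strategy from the proof of Theorem~\ref{thr:proofmainconj_2d+1}), at a total cost of $O_d(|H|)=O_d(n)$ steps. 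In the remaining graph $G-H$ all degrees are at most $d^{2}$, so the hypothesis $|L(v)|\ge k$ together with bounded degree brings us into a regime where a ``bounded-$\Delta$'' argument in the spirit of Theorem~\ref{thr:proofmainconj_2d+1} should furnish a linear bound on the reconfiguration distance on $G-H$.

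I expect the preliminary phase of harmonising the colourings on the high-degree set $H$ to be the hardest step, since the lists on vertices of $H$ are only of size $k$ while their degrees can be arbitrarily large, so no local move is guaranteed to be free. It is precisely here that a substantive new idea appears to be needed beyond the techniques developed in this paper --- likely involving a careful global reassignment that amortises recolourings of $H$ against the sparse neighbourhoods forced by $\mad(G)\le d$ --- which is presumably why the statement is posed as a conjecture rather than proved.
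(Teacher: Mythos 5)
This is the paper's Conjecture~\ref{conj:main_mad}, stated as an open problem; the authors give no proof, so there is nothing to compare against, and you are right that closing the argument would require a genuinely new idea. That said, the sketch has concrete flaws beyond the gap you acknowledge. The assertion that at most one neighbour $w$ of $v$ is currently coloured $\b(v)$ is false: properness of the current colouring only forbids neighbours of $v$ from receiving $v$'s own current colour, so up to $\deg(v)$ neighbours could simultaneously carry $\b(v)$, and the ``preliminary recolouring of $w$'' is really a multi-vertex cascade even before the high-degree difficulty enters.

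More seriously, your second phase does not land in any regime the paper actually handles. Theorem~\ref{thr:proofmainconj_2d+1} and its Edmonds--Gallai machinery require $|L(v)|\ge 2\deg(v)+1$. After removing $H=\{v:\deg(v)>d^2\}$, vertices of $G-H$ can still have degree up to $d^2$ while the lists have size only $\lceil d+2\rceil$; for $d\ge 2$ this can even fall below $\deg(v)+2$, so not even the paper's weakest hypothesis holds on $G-H$. In other words, the core obstruction --- lists of size $\lceil d+2\rceil$ confronted with vertices whose degree is unbounded in terms of $d$ --- persists on $G-H$ just as much as on $H$; the high-degree/low-degree split does not reduce the conjecture to any solved case, and a new amortisation idea would be needed in both phases, not only the first.
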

Feghali~\cite{Feghali21} proved that if $\epsilon>0$ and $k\ge d+1+\epsilon$, then 
$\diam \C_k(G) = O_d(n (2\ln{n})^{d})$.  
Conjecture~\ref{conj:main_mad} aims to prove a similar result for list colouring,
with one more colour available for each vertex, and with a somewhat stronger
bound on diameter.
All planar graphs $G$ have $\mad(G)<6$, and
all triangle-free planar graphs $G$ have $\mad(G)<4$, so we note that
Conjecture~\ref{conj:main_mad} would imply stronger forms of~\cite[Conjecture~22]{DF21}.
If $G$ is regular, then $\mad(G)=\degen(G)$,
so Conjecture~\ref{conj:main_mad} is true by
Theorem~\ref{thm:lists:d+2}. 
Conjecture~\ref{conj:main_mad} is also related to\footnote{If $\mad(G)$ is not an
integer, then $\degen(G)+3\le \ceil{\mad(G)+2}$.  But if $G$ has a regular
subgraph of degree $\mad(G)$, then this inequality fails.} a conjecture of
Bartier et al.~\cite[Conjecture~1.6]{BBFHMP21} that graphs
$G$ with $\degen(G)=d$ satisfy $\diam \C_{d+3}(G) = O_d(n)$. 
%

We do not know if Conjecture~\ref{conj:main_mad} might be true with a linear
bound of the form $O(n)$ instead of $O_d(n).$
But we do note, for this version of the conjecture and every constant $c$,
that no bound of the form $n(G)+c \mu(G)$ can hold.
This is shown by the star $K_{1,3c+3}=(\{u\} \cup \{w_1,\ldots,w_{3c+3}\},E)$
and the colourings $\a,
\b$ with $\a(u)=1, \b(u)=4$, $\b(w_i)=1$ and $\a(w_i)= 1 + \ceil{i/(c+1)}$.
Here $\lceil \mad(G)+2\rceil=4$ and $\dist(\a,\b)>n+c.$

For a correspondence cover $(L,H)$ such that $|L(v)|=d(v)+2$ for all $v \in V(G)$, in
Theorem~\ref{factor2:thm}(ii) we proved that $\diam~\C_{(L,H)}(G)\le
n(G)+2\tau(G)$.  We view this as modest evidence
that Cereceda's Conjecture might remain true in the more
general context of correspondence colourings.

\subsection{Open Problems}

The three focuses of this paper are the 
\hyperref[conj:main_list]{List Conjecture}, 
the \hyperref[conj:main_DP]{Correspondence Conjecture},
and (to a lesser extent) Question~\ref{q:n+mu_lowerbound}. 
All of these remain open.  However, each of them seems rather hard.
So, to motivate further research, below we identify some specific classes of
graphs for which we believe that each conjecture may be approachable.
We begin with some graph classes for which it would be particularly interesting to
make further progress on the \hyperref[conj:main_list]{List Conjecture}.
\begin{enumerate}
\item Complete $r$-partite graphs for each $r\geq 3$. (We know the
\hyperref[conj:main_list]{List Conjecture}
 is true for both complete graphs and complete bipartite graphs, so this is a
natural common generalization.)
\item Bipartite graphs, not necessarily complete.
\item Outerplanar graphs and, more generally, planar graphs. 
\item Subcubic graphs.  (We already proved
the \hyperref[conj:main_DP]{Correspondence Conjecture} for this class;
nonetheless,
the \hyperref[conj:main_list]{List Conjecture} remains open.)
\end{enumerate}

Conversely,
the \hyperref[conj:main_DP]{Correspondence Conjecture} 
remains open for the following basic graph classes.

\begin{enumerate}
    \item Complete graphs. (The argument of Bonamy and Bousquet for complete
graphs directly yields the \hyperref[conj:main_list]{List Conjecture}, but does
not yield the \hyperref[conj:main_DP]{Correspondence Conjecture}.)
    \item Complete bipartite graphs. 
    \item Bipartite graphs, not necessarily complete.  (This would imply the
\hyperref[conj:main_list]{List Conjecture} for the same class.)
    \item Outerplanar graphs and, more generally, planar graphs.
\end{enumerate}

Finally, we stress that it will be interesting to improve on
Theorems~\ref{factor2:thm} and~\ref{big-lists:thm}. For example, find the smallest
$\epsilon$ such that for every graph $G$ and list-assignment $L$ with
$|L(v)| \geq d(v)+2$ for all $v\in V(G)$, we have $\diam \mathcal{C}_L(G) \leq
n(G) + (1+\epsilon)\mu(G)$. We proved $\epsilon=1$ suffices,
while $\epsilon=0$ would resolve the \hyperref[conj:main_list]{List Conjecture}.

\section*{Acknowledgement}
We thank the organisers of the online workshop \emph{Graph Reconfiguration} of the Sparse Graphs Coalition\footnote{For more information,  visit \url{https://sparse-graphs.mimuw.edu.pl/doku.php}.}, where this project started. 
We also thank the two referees for their careful reading and valuable feedback.

\bibliographystyle{habbrv}
\bibliography{reconfiguration}

\end{document}